\theoremstyle{plain}
\newtheorem{thm}{Theorem}[subsection] 
\newtheorem{defn}[thm]{Definition}
\newtheorem{prop}[thm]{Proposition}
\newtheorem{conj}[thm]{Conjecture}
\newtheorem{lemma}[thm]{Lemma}
\newtheorem{corollary}[thm]{Corollary}
\theoremstyle{definition}
\newtheorem{rem}[thm]{Remark}
\title{GALOIS LATTICES AND STRONGLY DIVISIBLE LATTICES IN THE UNIPOTENT CASE}
\author{HUI GAO}
\address{Beijing International Center for Mathematical Research, Peking University, No. 5 Yiheyuan Road, Haidian District, Beijing 100871, China}
\email{gaohui@math.pku.edu.cn}
\subjclass{Primary 14F30,14L05}
\keywords{$p$-adic Galois representations, Semi-stable, Strongly divisible lattices, Unipotency }
\begin{document}
\maketitle

\pagestyle{myheadings}
\markright{GALOIS LATTICES AND STRONGLY DIVISIBLE LATTICES}


\newcommand{\Zp}{\mathbb{Z}_p}
\newcommand{\Qp}{\mathbb{Q}_p}
\newcommand{\Z}{\mathbb{Z}}
\newcommand{\Q}{\mathbb{Q}}
\newcommand{\R}{\mathbb{R}}
\newcommand{\C}{\mathbb{C}}
\newcommand{\N}{\mathbb{N}}

\newcommand{\Spec}{\textnormal{Spec}}
\newcommand{\Hom}{\textnormal{Hom}}
\newcommand{\Frac}{\textnormal{Frac}}
\newcommand{\Fil}{\textnormal{Fil}}
\newcommand{\Mod}{\textnormal{Mod}}
\newcommand{\ModFI}{\textnormal{ModFI}}
\newcommand{\Rep}{\textnormal{Rep}}
\newcommand{\Gal}{\textnormal{Gal}}
\newcommand{\Ker}{\textnormal{Ker}}
\newcommand{\HT}{\textnormal{HT}}
\newcommand{\Mat}{\textnormal{Mat}}
\newcommand{\GL}{\textnormal{GL}}

\newcommand{\Zl}{\mathbb{Z}_{\ell}}
\newcommand{\Ql}{\mathbb{Q}_{\ell}}
\newcommand{\A}{\mathbb{A}}

\newcommand{\bigM}{\mathcal{M}}
\newcommand{\bigD}{\mathcal{D}}
\newcommand{\huaS}{\mathfrak{S}}
\newcommand{\huaM}{\mathfrak{M}}
\newcommand{\Acris}{A_{\textnormal{cris}}}
\newcommand{\bigO}{\mathcal{O}}

\newcommand{\bigN}{\mathcal{N}}
\newcommand{\huaN}{\mathfrak{N}}

\newcommand{\M}{\mathcal{M}}
\newcommand{\BdR}{B_{\textnormal{dR}}}
\newcommand{\bigOE}{\mathcal{O}_{\mathcal{E}}}

\newcommand{\bolda}{\boldsymbol{\alpha}}
\newcommand{\boldb}{\boldsymbol{\beta}}
\newcommand{\bolde}{\boldsymbol{e}}
\newcommand{\boldf}{\boldsymbol{f}}
\newcommand{\boldd}{\boldsymbol{d}}

\newcommand{\tildeS}{\tilde{S}}
\newcommand{\tS}{\tilde{S}}
\newcommand{\tildeM}{\tilde{M}}
\newcommand{\tM}{\tilde{M}}

\newcommand{\Asthat}{\widehat{A_{\textnormal{st}}}}
\newcommand{\D}{\mathcal{D}}

\newcommand{\MF}{MF^{(\varphi, N)}}
\newcommand{\MFwa}{MF^{(\varphi, N)-\textnormal{w.a.}}}
\newcommand{\bigMF}{\mathcal{MF}^{(\varphi, N)}}
\newcommand{\bigMFwa}{\mathcal{MF}^{(\varphi, N)-\textnormal{w.a.}}}

\newcommand{\Mdual}{\huaM^{\vee}}
\newcommand{\StarhuaM}{\huaS\otimes_{\varphi,\huaS} \huaM}
\newcommand{\ModhuaS}{\Mod_{\huaS}^{\varphi}}

\newcommand{\Ghat}{\hat{G}}
\newcommand{\Rhat}{\hat{\mathcal{R}}}
\newcommand{\mhat}{\hat{\huaM}}
\newcommand{\That}{\hat{T}}
\newcommand{\ihat}{\hat{\iota}}
\newcommand{\tn}{t^{\{n\}}}
\newcommand{\huat}{\mathfrak{t}}

\begin{abstract}
Let $p$ be a prime. We prove that there is an anti-equivalence between the category of unipotent strongly divisible lattices of weight $p-1$ and the category of Galois stable $\Zp$-lattices in unipotent semi-stable representations with Hodge-Tate weights $\subseteq \{0, \ldots, p-1\}$. This completes the last remaining piece of Breuil's conjecture(Conjecture 2.2.6 in \cite{Bre02}).
\end{abstract}

\tableofcontents 

\section*{Introduction}

Let $p$ be a prime, $k$ a perfect field of characteristic $p$, $W(k)$ the ring of Witt vectors, $K_0 = W(k)[\frac{1}{p}]$ the fraction field, $K$ a finite totally ramified extension of $K_0$, $e =e(K/K_0)$ the ramification index and $G=G_K =\Gal(\overline{K}/K)$ the absolute Galois group. The idea of $p$-adic Hodge theory is to classify semi-stable $p$-adic Galois representations by some ``linear algebra data", in particular, weakly admissible filtered $(\varphi, N)$-modules (see Section 1). Since $G_K$ is compact, thus for any continuous $p$-adic representation $\rho: G_K \to \GL_d(\Qp)$, there always exist $G_K$-stable $\Zp$-lattices, and integral $p$-adic Hodge theory studies these lattices in semi-stable $p$-adic Galois representations. A natural question is to ask whether there exist corresponding integral structures in the ``linear algebra data" side. In \cite{FL82}, Fontaine and Laffaille defined some $W(k)$-lattices in the filtered $(\varphi, N)$-modules, but unfortunately, it only works for $e=1$ and when $\rho$ is a crystalline representation with Hodge-Tate weights in $\{0, \ldots, p-1\}$.
Later Breuil introduced the theory of filtered $(\varphi, N)$-modules over  $S$ (\cite{Bre97}, \cite{Bre98}, \cite{Bre99a}) to study semi-stable Galois representations (e.g., see Section 3), and showed that the category of such modules is equivalent to the category of filtered $(\varphi, N)$-modules. Furthermore, Breuil defined a natural integral structure in these modules, which are called strongly divisible lattices. There is a functor $T_{\text{st}}$ which will send a strongly divisible lattice to a $G_K$-stable $\Zp$-lattice in the semi-stable Galois representation. In \cite{Bre02}, Breuil proposed a classification of $G_K$-stable $\Zp$-lattices in semi-stable representations via strongly divisible lattices.

\begin{conj}[\cite{Bre02}]
\begin{enumerate}
\item For $0 \leq r < p-1$, $T_{\textnormal{st}}$ induces an anti-equivalence between the category of strongly divisible lattices of weight $r$ and the category of $G_K$-stable $\Zp$-lattices in semi-stable representations of $G_K$ with Hodge-Tate weights $\subseteq \{0, \ldots, r\}$.

\item For $r =p-1$, $T_{\textnormal{st}}$ induces an anti-equivalence between the category of unipotent strongly divisible lattices of weight $r$ and the category of $G_K$-stable $\Zp$-lattices in unipotent semi-stable representations of $G_K$ with Hodge-Tate weights $\subseteq \{0, \ldots, r\}$.
\end{enumerate}
\end{conj}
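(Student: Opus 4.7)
The plan is to reduce the problem to Liu's theory of $(\varphi,\Ghat)$-modules, which is already known to anti-equivalently classify $G_K$-stable $\Zp$-lattices in semi-stable representations with Hodge--Tate weights $\subseteq \{0,\ldots,p-1\}$ in terms of Kisin modules $\huaM$ over $\huaS$ of $E(u)$-height $\le p-1$ equipped with a compatible semi-linear $\Ghat$-action on $\Rhat\otimes_{\varphi,\huaS}\huaM$. The theorem then amounts to identifying the subcategory on the $(\varphi,\Ghat)$-side that corresponds to \emph{unipotent} semi-stable lattices with the essential image of a natural functor from unipotent strongly divisible lattices of weight $p-1$.

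The forward direction $\M\mapsto\huaM$ is a Cartier-type descent from $S$ to $\huaS$. I would pick a $\varphi$-compatible $\huaS$-basis of $\M$ (whose existence is guaranteed by a successive-approximation argument using strong divisibility), let $\huaM$ be its $\huaS$-span inside $\M$, and verify $\M\simeq S\otimes_{\varphi,\huaS}\huaM$ with $\Fil^{p-1}\M$ and $\varphi_{p-1}$ recovered from the $\varphi$-structure of $\huaM$. The $\Ghat$-action on $\Rhat\otimes_{\varphi,\huaS}\huaM$ is then built from the monodromy $N$ via a formal logarithmic identity inside $\Asthat$, and the unipotency of $\M$ is precisely what guarantees that the relevant series converges $p$-adically. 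Composing with Liu's anti-equivalence between $(\varphi,\Ghat)$-modules and Galois lattices yields the claimed functor and its full faithfulness.

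Conversely, starting from a $G_K$-stable $\Zp$-lattice $T$ in a unipotent semi-stable representation with Hodge--Tate weights $\subseteq\{0,\ldots,p-1\}$, Liu's theorem attaches $(\huaM,\Ghat)$; I then set $\M:=S\otimes_{\varphi,\huaS}\huaM$, equip it with $\Fil^{p-1}\M:=\{x\in\M:(1\otimes\varphi)(x)\in\Fil^{p-1}S\otimes_{\huaS}\huaM\}$ and with $\varphi_{p-1}$ the induced map. The substantive content is to (i)~extract a monodromy $N$ on $\M$ from the $\Ghat$-data; (ii)~verify that $N$ preserves $\M$ integrally, not merely $\M[\tfrac{1}{p}]$; (iii)~check the remaining strong divisibility axioms; and (iv)~translate the unipotency hypothesis on $T$ into topological nilpotence of $1-\varphi$ on $\M/u\M$. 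Together with Liu's full faithfulness this will establish the anti-equivalence.

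The main obstacle is steps (ii)--(iv) at the boundary weight $r=p-1$. In the range $r<p-1$, classical $p$-adic estimates on $\Fil^r S$ leave enough room to force $N$ to be integral on $\M$; at $r=p-1$ those estimates become exactly tight, which is why this case resisted the earlier methods. I expect the unipotency hypothesis to supply the missing slack by constraining the iterates of $\varphi$ on $\M/u\M$ sufficiently that the defining series for $N$ (respectively for $\Ghat$) converges inside $\M$ (respectively inside $\Rhat\otimes_{\varphi,\huaS}\huaM$). Matching unipotency across the bridge—showing that a unipotent strongly divisible lattice produces a unipotent semi-stable lattice and vice versa—should reduce to comparing the action of $1-\varphi$ on $\M/u\M$ with the corresponding operator on a Fontaine--Laffaille-type reduction of $T$, after which the usual formal yoga (duality up to a Tate twist, tensor compatibilities, functoriality) propagates automatically from Liu's framework and closes the last case of Breuil's conjecture.
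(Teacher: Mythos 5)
Your plan resembles the paper's at a high level — both pass through Kisin modules $\huaM$ over $\huaS$ and invoke Liu's theory to handle the $G_K$-structure — but it mislocates where the real difficulty sits and treats as routine the two steps that actually make $r = p-1$ a special case.

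The descent from $\M$ to $\huaM$ is not a routine successive approximation. For $r < p-1$ it is exactly Lemma 2.2.2 of \cite{CL09}, which rests on Breuil's mod-$p$ equivalence in \cite{Bre98}; the paper stresses that the functor $T_0$ underlying that equivalence is not even well defined at $r = p-1$. What replaces it is an intermediate category of $\Sigma$-modules (over $\Sigma = W(k)[\![Y]\!][u]/(E(u)^p-pY)$), an explicit construction of the maximal multiplicative submodule $M^{\rm m} = \bigcap_n(\varphi^*)^nM$ proved finite free, and the resulting numerical criterion: $M$ is unipotent if and only if $\prod_{i\ge 0}\varphi^i(A) = 0$, where $\bolda = A\bolde$ is the filtration matrix from Lemma \ref{SigmaFilr}. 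In the iteration that produces a $\varphi$-compatible $\huaS$-basis (Theorem \ref{mtoM}, Part 2), the correction matrices $C_n$ carry a ``front part'' $\prod_{i=0}^n\varphi^i(B_{n-i})$, which after several cancellations has the same $p$-divisibility as $\prod_{i=0}^n\varphi^i(A_0)$; the numerical criterion is exactly what forces this to vanish in the limit and hence $C_n$ to become $p$-divisible, so the basis can land in $\huaS$. \emph{This} is where unipotency earns its keep. In your proposal you instead assign unipotency the job of making the series for $N$ or for the $\Ghat$-action converge $p$-adically — but those series converge thanks to estimates on $I^{[n]}W(R)$ and $\gamma_n(t)$ that are indifferent to unipotency. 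Conflating these two roles leaves the crucial descent step unaccounted for.

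You also fold the $p = 2$ case silently into Liu's framework. The monodromy-stability argument you need for step (ii) (Proposition 2.4.1 of \cite{Liu12}) relies on choosing a topological generator $\tau$ of $G_{p^{\infty}}$ with $-\log([\underline{\epsilon}(\tau)]) = t$, which is only possible when $p > 2$. For $p = 2$ the paper abandons that route entirely and reverts to Breuil's strategy: pass to the auxiliary crystalline datum $D'$ (same $\varphi$ and filtration, $N=0$), show $D'$ is still unipotent, and reconstruct $1\otimes N_D$ on $\M$ by hand via the maps $D_0 \hookrightarrow D'$ and $D' \to D_0(1)$ with $D_0 = N(D)$. Your proposal gives no indication of how $p = 2$ would be closed. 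Finally, step (iv)'s dictionary — unipotency of $T$ corresponding to ``topological nilpotence of $1-\varphi$ on $\M/u\M$'' — is not the right translation: unipotency is dual to nilpotency, and the criterion involves the filtration matrix $A$, not the Frobenius matrix, via $\prod_i\varphi^i(A)\to 0$ (equivalently, topological nilpotence of the \emph{dual} Frobenius). The operator $1-\varphi$ does not appear, and this imprecision would corrupt the identification of the unipotent subcategories across the bridge.
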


Here, \emph{``unipotency"} is a technical condition (see Definition 1.0.7), and the terminology comes from the link with $p$-divisible groups, see the remark following Definition 2.1.1 of \cite{Bre02}.

When $r \leq 1$ and $p>2$, the conjecture was proved by Breuil in  \cite{Bre00} and \cite{Bre02}. In \cite{Bre99a}, Breuil showed that there always exists at least one strongly divisible lattice in the filtered $(\varphi, N)$-modules over $S$ when $er<p-1$. Using this result, Breuil proved the conjecture for general semi-stable representations in the case $e=1$ and $r< p-1$, and Caruso proved the case $er<p-1$ (\cite{Car08}). Later, by utilizing Kisin modules ($\varphi$-modules over $\huaS$) from \cite{Kis06}, Liu (\cite{Liu08}) completely proved part (1) of the conjecture.

In this paper, we will prove part (2), i.e., the $r =p-1$ unipotent case. We use a similar strategy as that of \cite{Liu08}.
Let $\pi$ be a fixed uniformizer of $K$, $\{\pi_n\}$ a fixed system of elements in $\overline{K}$ such that $\pi_1=\pi$, and $\pi_{n+1}^p =\pi_n$ for all $n$. Let $K_n =K(\pi_n)$, $K_{\infty} =\cup_{n \geq 1}K_n$, and $G_{\infty} = \Gal(\overline{K}/K_{\infty})$.
Recall that in \cite{Kis06}, Kisin constructed the category of $\varphi$-modules over $\huaS$ of finite height ($\Mod_{\huaS}^{\varphi}$, see Section 2), and a fully faithful functor $T_{\huaS}$ which sends such $\huaS$-modules to $\Zp$-representations of $G_{\infty}$.
In \cite{Kis06}, Kisin proved that for any $G_{\infty}$-stable $\Zp$-lattice $T$ in a semi-stable Galois representation, there always exists some $\huaM \in \Mod_{\huaS}^{\varphi}$ such that $T_{\huaS}(\huaM)=T$. Now by using Breuil's functor $\M_{\huaS}: \huaM \leadsto S\otimes_{\varphi, \huaS}\huaM$, it will give us a ``quasi-strongly divisible lattice", i.e., a strongly divisible lattice without considering monodromy. A key theorem of our paper (Theorem \ref{mtoM}) is to prove that this functor between $\huaS$-modules and $S$-modules is an equivalence in the unipotent situation. This equivalence and the full faithfulness of $T_{\huaS}$ will lead to the full faithfulness of $T_{\textnormal{st}}$.
Finally, by ``adding in monodromy", i.e., by showing that for any $G_K$-stable $\Zp$-lattice, the quasi-strongly divisible lattice constructed above is in fact a strongly divisible lattice (monodromy stable), this will lead to the essential surjectivity of $T_{\textnormal{st}}$.

The paper proceeds as follows. After a brief review of $p$-adic Hodge theory, we will define categories of $\varphi$-modules over $\huaS$, $\Sigma$ and $S$ (denoted as $\Mod_{\huaS}^{\varphi}$, $\Mod_{\Sigma}^{\varphi}$, $\Mod_{S}^{\varphi}$ respectively) in Section 2. We will define notions of ``unipotency" on these modules, and show that they are compatible.
For a $\varphi$-module $M$ over $\Sigma$, we will construct its maximal unipotent submodule explicitly (Theorem \ref{Mm} and Theorem \ref{SigmaM}). This construction, together with the equivalence between categories $\Mod_{\Sigma}^{\varphi}$ and $\Mod_{S}^{\varphi}$, will give us a ``numerical criterion" for unipotency in the category $\Mod_{S}^{\varphi}$ (Corollary \ref{remark}).
The numerical criterion is crucially used in establishing the equivalence between the category of unipotent $\huaS$-modules and the category of unipotent $S$-modules (Theorem \ref{mtoM}). The proof of Theorem \ref{mtoM} imitates that of Lemma 2.2.2 of \cite{CL09}, but requires substantially more careful analysis.
In Section 3, we introduce strongly divisible lattices, and by using the results as well as the proofs in Section 2, we will show that for unipotent semi-stable Galois representations, all quasi-strongly divisible lattices are unipotent (regarded as $\varphi$-modules over $S$). Then we will be able to state our Main Theorem (Theorem \ref{mainthm}). Section 4 is devoted to the proof of the main theorem. The equivalence between categories of unipotent modules proved in Section 2 will be used to prove the full faithfulness of our functor $T_{\text{st}}$. Note that the proof is much simpler than that in \cite{Liu08}, i.e., we do not need Lemma 3.4.7 in \cite{Liu08}. Finally, in subsection 4.2, we will ``add in" monodromy to show the essential surjectivity of $T_{\text{st}}$. The cases when $p>2$ can be dealt with by using the integral theory of \cite{Liu10} and \cite{Liu12}. For $p=2$, we need a separate strategy from \cite{Bre02}.

\noindent\textbf{Acknowlegement}
The author wants to thank his advisor, Professor Tong Liu, for suggesting this problem, for countless advice and suggestions, and for the continuing encouragement. In particular, he read the drafts several times very carefully, pointed out many mistakes and suggested many improvements. Without him, this work cannot come out. The author also would like to heartily thank Professor Eike Lau for pointing out a mistake in the proof of Theorem \ref{SigmatoS} and Theorem \ref{mtoM}.

\noindent\textbf{Notations}

We fix a uniformizer $\pi$ of $K$, and let $E(u) \in W(k)[u]$ be the minimal polynomial for $\pi$ over $K_0$ of degree $e$. We use $(e_1, \cdots, e_d)^{T}$ to denote the transpose of a row vector. We sometimes use boldface letters ($\bolde, \bolda$, etc.) to denote a column vector. We use notations like $\oplus S \bolde$ to denote the space of $S$-span of vectors in $\bolde$, i.e., if $\bolde=(e_1, \ldots, e_d)^T$, then $\oplus S \bolde =\oplus_{i=1}^d Se_i$. Unfortunately, we sometimes use notations like $\oplus \Sigma \bolde$ where $\Sigma$ is a ring (not the summation sign), but it should be clear from the context. Let $\Mat(?)$ be the ring of matrices with all elements in $?$, $\GL(?)$ the invertible matrices, $Id$ the identity matrix. We use $\gamma_i(x) = \frac{x^i}{i!}$ to denote divided powers. We use $\varphi$ and $N$ to denote Frobenius and monodromy actions on various rings and modules (e.g., $\varphi_{\huaS}, N_{S}$), and when no confusion arises, we omit the subscripts.

\section{Review of $p$-adic Hodge theory}

In this section, we recall some notions in $p$-adic Hodge theory. For more detailed definitions, see e.g., \cite{CF00}.

Let $\overline{K}$ be a fixed algebraic closure of $K$, $\bigO_{\overline{K}}$ the ring of integers, $C_K$ the completion of $\overline{K}$ with respect to the valuation topology.
Recall that $R:=\underleftarrow{\lim}_{n\in\mathbb{N}} \bigO_{\overline{K}}/p\bigO_{\overline{K}}$, where the inverse limit is by taking $p$-th power consecutively.
Let $W(R)$ be the ring of Witt vectors. There is a surjective $K_0$-homomorphism $\theta: W(R)[\frac{1}{p}] \to C_K$ with the kernel a principle ideal generated by an element $\xi=[\varpi]+p$, where $\varpi\in R$ with $\varpi^{(0)}=-p$ and $[\varpi] \in W(R)$ is a Teichm\"{u}ller lift of $\varpi$.
Let $B_{\text{dR}}^{+}:=\underleftarrow{\lim}_{n\in\mathbb{N}}W(R)[1/p]/(\xi)^{n}$, $\BdR:= \Frac\BdR^{+}=\BdR^{+}[1/\xi]$, and there is a filtration on $B_{\text{dR}}$ by $\Fil^i B_{\text{dR}} = B_{\text{dR}}^{+} \xi^i$ for $i \in \Z$.
Let $\Acris:=\{\sum_{n=0}^{\infty}a_{n}\gamma_{n}(\xi), a_{n} \to 0 \: p-\text{adically in }W(R)\} \subset B_{\text{dR}}^{+}.$
Let $\mu_{p^n} \in \overline{K}, n \geq 1$ be a fixed system of primitive roots of unity such that $\mu_p^p=1$ and $\mu_{p^{n+1}}^p =\mu_{p^n}$ for all $n$. Let $\varepsilon = (1, \mu_p, \mu_{p^2}, \ldots) \in R$, $[\varepsilon] \in W(R)$ a Teichm\"{u}ller lift of $\varepsilon$, and $t=\sum_{n=1}^{\infty}(-1)^{n+1}\frac{([\varepsilon]-1)^{n}}{n}\in\BdR^{+}$.
Let $B_{\text{cris}}^{+}:=\Acris[1/p]$, $B_{\text{cris}}:=B_{\text{cris}}^{+}[1/t] \subset B_{\text{dR}}$.
There is a natural Frobenius action $\varphi$ on $B_{\textnormal{cris}}$. Let $B_{\textnormal{st}}:= B_{\text{cris}}[X]$ where $X$ is an indeterminate, there is a natural Frobenius $\varphi$ and monodromy operator $N$ on it. (We omit the precise definition of the Frobenius and monodromy actions.) We have the embeddings $B_{\text{cris}} \subset B_{\text{st}} \subset B_{\text{dR}}$, and we have filtrations on $B_{\text{cris}}$ and $B_{\text{st}}$ induced from that of $B_{\text{dR}}$.

Recall that a $d$-dimensional $p$-adic representation $\rho: G_K \to \GL(V)$ is called
\emph{semi-stable} if $\dim_{K_{0}}(B_{\textnormal{st}}\otimes_{\Qp} V)^{G_{K}}=d$. We denote the category of semi-stable $p$-adic representations as $\Rep_{\Qp}^{\textnormal{st}}(G_K)$.

A \emph{filtered $(\varphi,N)$-module} is a finite dimensional $K_0$-vector space $D$ equipped with
\begin{enumerate}
\item a Frobenius $\varphi: D \to D$, which is semi-linear injective with respect to the arithmetic Frobenius on $K_0$, i.e., $\varphi(ax)=\varphi(a)\varphi(x)$ for all $a \in K_0, x \in D$;
\item a monodromy $N: D \to D$, which is a $K_{0}$-linear map such that $N\varphi=p\varphi N$;
\item a filtration $(\Fil^{i}D_{K})_{i\in\mathbb{Z}}$ on $D_{K}=D\otimes_{K_0} K$, by decreasing $K$-vector subspaces such that $\Fil^{i}D_{K}= D_K$ for $i <<0$ and $\Fil^{i}D_{K}=0$ for $i>>0$.
\end{enumerate}
A morphism between filtered $(\varphi,N)$-modules is a $K_{0}$-linear map which is compatible with $\varphi,N$ and $\Fil^{i}_K$.
We use $\MF$ to denote the category of all filtered $(\varphi,N)$-modules. A sequence $0 \to D_1 \to  D \to D_2 \to 0$ in $\MF$ is called short exact if it is short exact as $K_0$-vector spaces and the sequences on filtrations $0 \to \Fil^i D_{1, K} \to \Fil^i D_{K} \to \Fil^i D_{2, K} \to 0$ are exact for all $i$. In this case, we call $D_2$ a quotient of $D$.

For $D \in \MF$, define:
\begin{itemize}
\item $t_{H}(D)=\sum_{i\in\Z}i\dim_{K}\text{gr}^{i}D_{K}$, where $\text{gr}^{i}D_{K}=\Fil^{i}D_{K}/\Fil^{i+1}D_{K}$.
\item $t_{N}(D)=v_{p}(\det A)$, where $A$ is the matrix for $\varphi$ with respect to some basis $(e_{1},\ldots,e_{d})^T$ of $D$, i.e., $\varphi(e_{1},\ldots,e_{d})^T=A(e_{1},\ldots,e_{d})^T$. Note that $v_{p}(\det A)$ does not depend on the choice of basis.
\end{itemize}

A filtered $(\varphi,N)$-module $D$ is called \emph{weakly admissible} if $t_{H}(D)=t_{N}(D)$
and $t_{H}(D')\leq t_{N}(D')$ for any filtered $(\varphi,N)$-submodule $D'$ of $D$. We use $\MFwa$ to denote the category of weakly admissible filtered $(\varphi,N)$-modules.

\begin{thm}[\cite{CF00}] \label{CF00}
The functor $D_{\textnormal{st}}(V):=(B_{\textnormal{st}}\otimes_{\Qp}V^{\ast})^{G_{K}}$ induces an exact anti-equivalence between $\Rep_{\Qp}^{\textnormal{st}}(G_{K})$ and $\MFwa$, where $V^{\ast}$ is the dual representation of $V$. A quasi-inverse is given by: $$V_{\textnormal{st}}(D) := \Hom_{\varphi, N}(D, B_{\textnormal{st}}) \cap \Hom_{\Fil^{\bullet}}(D_K, K\otimes_{K_0} B_{\textnormal{st}}).$$
\end{thm}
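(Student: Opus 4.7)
The plan is to reproduce the classical Colmez--Fontaine strategy, splitting the proof into four blocks: (a) well-definedness of $D_{\textnormal{st}}$ as a functor to $\MFwa$, (b) full faithfulness via the fundamental exact sequence, (c) weak admissibility of $D_{\textnormal{st}}(V)$, and (d) essential surjectivity, i.e.\ the ``weakly admissible implies admissible'' theorem. Exactness will fall out afterwards once the anti-equivalence is in place.

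First I would check that the functor lands in $\MF$. For $V \in \Rep_{\Qp}^{\textnormal{st}}(G_K)$ of dimension $d$, semi-stability gives $\dim_{K_0} D_{\textnormal{st}}(V) = d$ and a $B_{\textnormal{st}}$-linear isomorphism $B_{\textnormal{st}}\otimes_{K_0} D_{\textnormal{st}}(V)\cong B_{\textnormal{st}}\otimes_{\Qp} V^{\ast}$. The Frobenius and monodromy on $B_{\textnormal{st}}$ restrict to $D_{\textnormal{st}}(V)$, inheriting $\varphi$ injective and $N\varphi = p\varphi N$. Pulling back $\Fil^{i}\BdR$ through the natural embedding $K\otimes_{K_0} D_{\textnormal{st}}(V)\hookrightarrow (\BdR\otimes_{\Qp} V^{\ast})^{G_K}$ equips $D_{\textnormal{st}}(V)_K$ with an exhaustive separated filtration. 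For full faithfulness I would invoke the fundamental exact sequence $0\to \Qp\to B_{\textnormal{cris}}^{\varphi=1}\to \BdR/\BdR^{+}\to 0$, whose $G_K$-invariants paired with $D_{\textnormal{st}}(V)$ identify $V_{\textnormal{st}}(D_{\textnormal{st}}(V))$ with $V$ canonically.

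Next I would verify weak admissibility of $D=D_{\textnormal{st}}(V)$. Taking determinants reduces $t_H(D)=t_N(D)$ to the rank-one case, where it follows from the compatibility of $\varphi$ with the period $t$ and from the fact that $B_{\textnormal{cris}}^{\varphi=1}\cap \Fil^{0}\BdR=\Qp$. For the inequality $t_H(D')\leq t_N(D')$ on subobjects, I would form $\wedge^{\dim D'}D'\subseteq B_{\textnormal{st}}\otimes_{\Qp}\wedge^{\dim D'}V^{\ast}$ and exploit the same rank-one analysis together with the constraint that $(\varphi,N)$-stable $B_{\textnormal{st}}$-lines intersecting $\Fil^{t_H}\BdR$ nontrivially must have $\varphi$-slope at least $t_H$.

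The main obstacle, and the deepest input, is essential surjectivity: given $D\in\MFwa$, one must show that $V:=V_{\textnormal{st}}(D)$ has dimension exactly $\dim_{K_0}D$. The one-sided inequality $\dim_{\Qp}V\leq\dim_{K_0}D$ is straightforward; equality is the content of \cite{CF00}. The strategy I would follow is: (i) twist by $\log[\varpi]$-powers to reduce to the crystalline case $N=0$; (ii) for crystalline $D$, decompose $B_{\textnormal{cris}}\otimes_{K_0}D$ into isoclinic pieces via Dieudonn\'e--Manin and use weak admissibility together with the Hodge--Newton comparison to cut out, slope by slope, a $\varphi$-fixed $\Qp$-subspace of the correct dimension inside $\Fil^{0}(\BdR\otimes_K D_K)$; (iii) conclude by induction on rank, where the inductive step uses that any proper weakly admissible subobject or quotient is again weakly admissible with strictly smaller invariants. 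Once essential surjectivity is known, exactness follows because short exact sequences on each side are characterized by dimension additivity together with strict compatibility of the filtrations with respect to $\Fil^{i}\BdR$, and this strictness is automatic on admissible objects. The anti-equivalence and the explicit quasi-inverse formula then follow.
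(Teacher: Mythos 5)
The paper does not prove this theorem at all: it is stated as a black-box result, cited directly from Colmez and Fontaine \cite{CF00}, and the text proceeds immediately to use it. There is therefore no proof in the paper to compare your argument against, and in the context of this article the expected ``proof'' is simply the citation.

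That said, a couple of remarks on your sketch, since you chose to outline the argument of \cite{CF00}. Items (a)--(c) are a fair summary of the easy direction (well-definedness and weak admissibility of $D_{\textnormal{st}}(V)$, full faithfulness via the fundamental exact sequence $0\to\Qp\to B_{\textnormal{cris}}^{\varphi=1}\to\BdR/\BdR^{+}\to 0$). The problem is step (d). There is no ``twist by $\log[\varpi]$-powers'' that reduces a weakly admissible $(\varphi,N)$-module to the crystalline case $N=0$: adjoining $\log[\varpi]$ is how one enlarges $B_{\textnormal{cris}}$ to $B_{\textnormal{st}}$, not a device for killing $N$ on the module side. The actual reduction to $N=0$ in \cite{CF00} is a separate, genuinely nontrivial step, and the crystalline case itself is much harder than ``decompose via Dieudonn\'e--Manin and extract slope by slope'': the entire content of ``weakly admissible implies admissible'' is a delicate dimension count in period rings (or, in Berger's later proof, a passage through $(\varphi,\Gamma)$-modules and the slope filtration over the Robba ring). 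As written, your step (d) names the right ingredients but does not indicate where weak admissibility is actually consumed, so it is a gesture at the theorem rather than a proof of it.
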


\begin{rem}
The notation here is the same as that in Convention 2.1.1 in \cite{Liu08}, i.e., our $D_{\textnormal{st}}$ is $D_{\textnormal{st}}^{\ast}$ in \cite{Bre02} and \cite{CF00}.
We define the Hodge-Tate weights $\HT(D) = \{i, \textnormal{gr}^{i}D_{K} \neq 0  \}$. In this article, we will always assume that $\HT(D) \subseteq \{ 0, \ldots, r\}$ where $r$ is a nonnegative integer. And by the above theorem, we define the Hodge-Tate weights of a semi-stable representations $V$ as $\HT(V) =\HT(D_{\textnormal{st}}(V))$. Thus $\HT(\chi_p)= {1}$ for the $p$-adic cyclotomic character $\chi_p$.
\end{rem}

For a fixed chosen $r$ such that $\HT(D) \subseteq \{ 0, \ldots, r\}$, the \emph{Cartier dual} of $D \in \MF$ is defined by $D^{\vee}= \Hom_{K_0} (D, K_0)$. Let $\bolde=(e_1, \ldots, e_d)^T$ be a $K_0$-basis of $D$, $\bolde^{\vee}=(e^{\vee}_1, \ldots, e^{\vee}_d)^T$ the dual basis of $D^{\vee}$. Let $A \in \GL_d(K_0), B \in \Mat_d(K_0)$ be the matrices such that $\varphi\bolde=A\bolde, N\bolde=B\bolde$. Then we define $\varphi^{\vee}$ and $N^{\vee}$ by letting $\varphi^{\vee}\bolde^{\vee}=p^r(A^{-1})^T\bolde^{\vee}$ and $N^{\vee}\bolde^{\vee}=-B^T\bolde^{\vee}$. The filtration on $D^{\vee}_K$ is defined by:
$$\Fil^i D^{\vee}_K = (\Fil^{r+1-i} D_K)^{\perp} := \{\ell \in D^{\vee}_K, \Fil^{r+1-i} D_K \subseteq \Ker \ell \}.$$
It is easy to check that $\HT(D^{\vee}) \subseteq \{ 0, \ldots, r\}$, and $D$ is weakly admissible if and only if $D^{\vee}$ is so. When $D$ is weakly admissible, i.e., when $V_{\rm st}(D)$ is semi-stable, we have $D^{\vee} = D_{\rm{st}}( (V_{\rm{st}}(D))^{\ast}\otimes \chi_p^r )$. It is also easy to check that the functor of taking Cartier duals induces a duality on $\MF$ (resp. $\MFwa$), and it is exact.

\begin{defn}
For $D \in \MF$ with Hodge-Tate weights in $\{ 0, \ldots, r\}$,
\begin{enumerate}
\item  $D$ is called \'{e}tale if $\Fil^r D_K =D_K$, it is called multiplicative if $\Fil^1 D_K =\{0\}$.
\item $D$ is called nilpotent if it does not have nonzero multiplicative submodules, it is called unipotent if it does not have nonzero \'{e}tale quotients.
\item For $D \in \MFwa$, the definition of nilpotency and unipotency is similar, except that the multiplicative submodules or \'{e}tale quotients have to lie in $\MFwa$ too.
\end{enumerate}
\end{defn}

\begin{rem}
\begin{enumerate}
\item Both $\MF$ and $\MFwa$ are abelian categories, thus $D$ is multiplicative (resp. nilpotent) if and only if $D^{\vee}$ is \'{e}tale (resp. unipotent), and vice versa.

\item By Theorem \ref{CF00}, given a semi-stable representation $V$, let $D$ be the corresponding weakly-admissible filtered $(\varphi, N)$-module. Then it is easy to show that $D$ is multiplicative if and only if $V$ is an unramified representation, $D$ is nilpotent if and only if $V$ contains no nonzero unramified quotient.
\end{enumerate}
\end{rem}

\section{$\huaS$-modules, $\Sigma$-modules and $S$-modules}

In this section, we study Kisin modules and Breuil modules which are important in integral $p$-adic Hodge theory. In what follows, $r$ is a natural number such that $0 \leq r \leq p-1$.

\subsection{$\huaS$-modules (Kisin modules)}

Recall that $\mathfrak{S}=W(k)[\![u]\!]$ with the Frobenius endomorphism $\varphi_{\huaS}: \huaS \to \huaS$ which acts on $W(k)$ via arithmetic Frobenius and sends $u$ to $u^p$. Denote $\huaS_n = \huaS/p^n\huaS$.

Let $'\ModhuaS$ be the category whose objects are $\huaS$-modules $\huaM$, equipped with $\varphi:\huaM\to\huaM$ which is a
$\varphi_{\huaS}$-semi-linear morphism such that the span of $\text{Im}(\varphi)$ contains $E(u)^{r}\huaM$.
The morphisms in the category are $\huaS$-linear maps that commute with $\varphi$.
Let $\ModFI_{\mathfrak{S}}^{\varphi}$ be the full subcategory of $'\ModhuaS$ with $\huaM \simeq \oplus_{i \in I}\mathfrak{S}_{n_{i}}$ where $I$ is a finite set. Let $\Mod_{\huaS_1}^{\varphi}$ be the full subcategory of $\ModFI_{\mathfrak{S}}^{\varphi}$ with $\huaM \simeq \oplus_{i \in I}\huaS_1$ where $I$ is a finite set.
Let $\ModhuaS$ be the full subcategory of $'\ModhuaS$ with $\huaM$ finite free over $\huaS$.

For $\huaM \in$ $'\ModhuaS$, by $\varphi:\huaS\to\huaS$, we have the $\huaS$-linear map $1\otimes\varphi:\huaS \otimes_{\varphi, \huaS}\huaM \to \huaM$ which sends $s\otimes m$ to $s\varphi(m)$. Let $\varphi^{\ast}\huaM$ denote the image.

We call $\huaM_2$ a quotient of $\huaM$ in the category $\ModhuaS$ if there is a short exact sequence $0 \to \huaM_1 \to \huaM \to \huaM_2 \to 0$ in the category, here short exact means short exact as $\huaS$-modules.

For $\huaM \in \ModhuaS$, the Cartier dual is defined by $\huaM^{\vee}=\Hom_{\huaS}(\huaM,\huaS)$, and $\varphi^{\vee}: \huaM^{\vee} \to \huaM^{\vee}$ is defined such that $\langle \varphi^{\vee}(x^{\vee}) , \varphi(y)\rangle = E(u)^r \langle x^{\vee}, y \rangle, \forall x^{\vee} \in \huaM^{\vee}, y \in \huaM$. It is easy to check that the functor of taking Cartier duals induces a duality on the category $\ModhuaS$, and it transforms short exact sequences to short exact sequences.

\begin{defn}
Let $\huaM \in \ModhuaS$,
\begin{enumerate}
 \item $\huaM$ is called \'{e}tale (resp. multiplicative) if $\varphi^{\ast}\huaM$ is equal to $E(u)^{r}\huaM$ (resp. $\huaM$).
\item $\huaM$ is called nilpotent if it has no nonzero multiplicative submodules, it is called unipotent if it has no nonzero \'{e}tale quotients.
\end{enumerate}
\end{defn}

\begin{lemma} \label{huaMker}
Given a surjective morphism $f: \huaM_1 \twoheadrightarrow \huaM_2$ in $\ModhuaS$, the kernel $\huaM$ is also in $\ModhuaS$.
\end{lemma}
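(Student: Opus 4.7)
The plan is to verify the two requirements defining $\ModhuaS$ for $\huaM := \ker f$: that $\huaM$ is finite free over $\huaS$, and that $\varphi$ restricts to $\huaM$ with $E(u)^r \huaM$ contained in the $\huaS$-span of $\varphi(\huaM)$.

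First I would dispense with the easy points. Since $\huaM_2$ is finite free over $\huaS$, it is projective, so the short exact sequence $0 \to \huaM \to \huaM_1 \to \huaM_2 \to 0$ splits as a sequence of $\huaS$-modules. Hence $\huaM$ is a direct summand of the finite free module $\huaM_1$, and therefore finitely generated projective over the local ring $\huaS = W(k)[\![u]\!]$, which forces it to be finite free. The inclusion $\varphi(\huaM) \subseteq \huaM$ follows at once from $f \circ \varphi = \varphi \circ f$, so $\varphi$ restricts to a $\varphi_{\huaS}$-semi-linear endomorphism of $\huaM$.

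The remaining content is the height condition. For this I would use the flatness of the Frobenius $\varphi : \huaS \to \huaS$: since $\huaS$ is free of rank $p$ over $\varphi(\huaS) = W(k)[\![u^p]\!]$ with basis $\{1, u, \ldots, u^{p-1}\}$, the base change $\huaS \otimes_{\varphi, \huaS} -$ is exact. Applying it to the sequence yields a commutative diagram of $\huaS$-modules with exact top row
\[
0 \to \huaS \otimes_{\varphi, \huaS} \huaM \to \huaS \otimes_{\varphi, \huaS} \huaM_1 \to \huaS \otimes_{\varphi, \huaS} \huaM_2 \to 0
\]
and bottom row $0 \to \huaM \to \huaM_1 \to \huaM_2 \to 0$, with vertical maps all given by $1 \otimes \varphi$. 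The key auxiliary point is that for any object of $\ModhuaS$ the map $1 \otimes \varphi$ is \emph{injective}: both source and target are finite free of the same rank over the domain $\huaS$, and since the cokernel is killed by $E(u)^r$ the representing square matrix must have nonzero determinant (otherwise the image would not have full rank), so it acts injectively.

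Granting this injectivity, for any $m \in \huaM$ I would use $E(u)^r m \in E(u)^r \huaM_1 \subseteq \huaS \cdot \varphi(\huaM_1)$ to write $E(u)^r m = (1\otimes\varphi)(x)$ with $x \in \huaS \otimes_{\varphi, \huaS} \huaM_1$. The image $\bar x$ in $\huaS \otimes_{\varphi, \huaS} \huaM_2$ satisfies $(1\otimes\varphi)(\bar x) = E(u)^r f(m) = 0$, so $\bar x = 0$ by injectivity, and exactness of the top row places $x$ in $\huaS \otimes_{\varphi, \huaS} \huaM$. Hence $E(u)^r m$ lies in the $\huaS$-span of $\varphi(\huaM)$, as required. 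The one point that merits actual care is the injectivity of $1 \otimes \varphi$ on a finite free Kisin module of height $\leq r$; everything else is diagram-chasing.
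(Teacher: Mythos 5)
Your proof is correct and complete, but it takes a genuinely different route from the paper's on both points.

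For finite freeness, the paper argues by reduction modulo $u$: since $\huaM_2$ is $u$-torsion free, $\huaM/u\huaM$ injects into $\huaM_1/u\huaM_1$, hence $\huaM/u\huaM$ is finite free over the PID $W(k)$; a lifted basis and Nakayama's lemma then show $\huaM$ is free, and as a byproduct one obtains a basis $(\hat{e}_1,\ldots,\hat{e}_{d_1},\hat{g}_1,\ldots,\hat{g}_{d_2})$ of $\huaM_1$ adapted to the short exact sequence. You instead split the sequence of $\huaS$-modules using projectivity of $\huaM_2$ and invoke that a finitely generated projective module over the local ring $\huaS$ is free. Both are standard; yours is shorter, but the paper's choice of adapted basis is then recycled for the second half. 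For the height condition, the paper observes that $\varphi$ is block triangular in that adapted basis of $\huaM_1$ (because $\huaM$ is $\varphi$-stable), and reads off from the block structure and $AA'=E(u)^rId$ that the diagonal block for $\huaM$ also satisfies the height bound. You argue basis-free: $\huaS$ is free of rank $p$ over $\varphi(\huaS)=W(k)[\![u^p]\!]$, so $\huaS\otimes_{\varphi,\huaS}(-)$ is exact; $1\otimes\varphi$ is injective on any object of $\ModhuaS$ since its $d\times d$ matrix over the domain $\huaS$ has nonzero determinant (its cokernel is $E(u)^r$-torsion), and a short diagram chase places the preimage of $E(u)^r m$ inside $\huaS\otimes_{\varphi,\huaS}\huaM$. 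Your version makes the exactness of $\varphi$-base change and the injectivity of $1\otimes\varphi$ explicit as general facts, while the paper's is more concrete but leaves the block-triangular manipulation unstated.
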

\begin{proof}
We first prove that $\huaM$ is finite free over $\huaS$. Since $\huaM_1/\huaM = \huaM_2$ is $u$-torsion free, $\huaM/u\huaM \hookrightarrow \huaM_1/u\huaM_1$. Thus we have a short exact sequence of $W(k)$-modules: $0 \to \huaM/u\huaM \to \huaM_1/u\huaM_1 \to \huaM_2/u\huaM_2 \to 0$. Since $W(k)$ is a PID, $\huaM/u\huaM$ is finite free. Suppose $(e_1, \ldots, e_{d_1})$ is a basis of $\huaM/u\huaM$, and lift it to $(\hat{e}_1, \ldots, \hat{e}_{d_1})$ in $\huaM$, which generates $\huaM$ by Nakayama Lemma. Take a basis $(\hat{f}_1, \ldots, \hat{f}_{d_2})$ of $\huaM_2$ and lift it to $(\hat{g}_1, \ldots, \hat{g}_{d_2})$ in $\huaM_1$, then clearly the reduction modulo $u$ of $(\hat{e}_1, \ldots, \hat{e}_{d_1}, \hat{g}_1, \ldots, \hat{g}_{d_2})$ is a basis of $\huaM_1/u\huaM_1$, so $(\hat{e}_1, \ldots, \hat{e}_{d_1}, \hat{g}_1, \ldots, \hat{g}_{d_2})$ is in fact a basis of $\huaM_1$. Thus $(\hat{e}_1, \ldots, \hat{e}_{d_1})$ is a basis of $\huaM$, and $\huaM$ is finite free.
$\huaM$ is clearly $\varphi$-stable. To show that the span of $\varphi(\huaM)$ contains $E(u)^r\huaM$, just look at the matrix of $\varphi$ for $\huaM_1$ with respect to the basis $(\hat{e}_1, \ldots, \hat{e}_{d_1}, \hat{g}_1, \ldots, \hat{g}_{d_2})$, which will be block upper-triangular. Span of $\varphi(\huaM_1)$ contains $E(u)^r\huaM_1$ will imply that span of $\varphi(\huaM)$ contains $E(u)^r\huaM$.
\end{proof}

\begin{prop} \label{huaMexact}
Let $\huaM \in \ModhuaS$,
\begin{enumerate}
\item   $\huaM$ is \'{e}tale (resp. unipotent) if and only if $\Mdual$ is multiplicative (resp. nilpotent), and vice versa.

\item Let $(\varphi^{*})^{n}\huaM$ be the $\huaS$-submodule of $\huaM$ generated by $\varphi^{n}(\huaM)$.
Then $\huaM^{\textnormal{m}}:=\cap_{n=1}^{\infty}(\varphi^{*})^{n}\huaM$ is in $\Mod_{\mathfrak{S}}^{\varphi}$, and it is the maximal multiplicative submodule of $\huaM$.
The quotient $\huaM/\huaM^{\textnormal{m}}$ is in $\Mod_{\mathfrak{S}}^{\varphi}$ as well.

\item   $\huaM$ is nilpotent if and only if $(\varphi^{*})^{n}\huaM\subseteq(p,u)\huaM$ for $n \gg 0$.

\item We have short exact sequences
$$ 0\to\huaM^{\textnormal{m}}\to\huaM\to\huaM^{\textnormal{nil}}\to 0$$
and
$$ 0 \to \huaM^{\textnormal{uni}} \to \huaM \to \huaM^{\textnormal{et}} \to 0,$$
where $\huaM^{\textnormal{m}}, \huaM^{\textnormal{nil}}, \huaM^{\textnormal{uni}}, \huaM^{\textnormal{et}}$ are maximal multiplicative submodule, maximal nilpotent quotient, maximal unipotent submodule, maximal \'{e}tale quotient of $\huaM$ in the category $\ModhuaS$ respectively (by maximal quotient, we mean any other quotient is a quotient of our maximal quotient).

\end{enumerate}
\end{prop}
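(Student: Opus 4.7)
First I would dispatch part (1). Choosing a basis $\bolde$ of $\huaM$ with $\varphi\bolde = A\bolde$, the module $\huaM$ is \'etale iff $A = E(u)^{r} U$ for some $U \in \GL_{d}(\huaS)$, in which case the dual Frobenius matrix $E(u)^{r}(A^{-1})^{T} = U^{-T}$ lies in $\GL_{d}(\huaS)$, so $\huaM^{\vee}$ is multiplicative; the converse is symmetric. For the nilpotent/unipotent duality, I would use that Lemma \ref{huaMker} keeps short exact sequences inside $\ModhuaS$: Cartier-dualizing a short exact sequence $0 \to \huaM_{1} \to \huaM \to \huaM_{2} \to 0$ produces $0 \to \huaM_{2}^{\vee} \to \huaM^{\vee} \to \huaM_{1}^{\vee} \to 0$, so the already-proved \'etale/multiplicative duality translates ``nonzero multiplicative submodule of $\huaM$'' into ``nonzero \'etale quotient of $\huaM^{\vee}$''.

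Part (2) is the main technical step. The chain $\huaM \supseteq \varphi^{\ast}\huaM \supseteq (\varphi^{\ast})^{2}\huaM \supseteq \cdots$ consists of $\varphi$-stable $\huaS$-submodules of full rank (since $E(u)^{rn}\huaM \subseteq (\varphi^{\ast})^{n}\huaM$), so any multiplicative submodule $\huaN \subseteq \huaM$ (with $\varphi^{\ast}\huaN = \huaN$) automatically satisfies $\huaN \subseteq \huaM^{\textnormal{m}} := \cap_{n}(\varphi^{\ast})^{n}\huaM$; this identifies $\huaM^{\textnormal{m}}$ as the only candidate for the maximal multiplicative submodule. To show $\huaM^{\textnormal{m}}$ itself belongs to $\ModhuaS$ and is multiplicative, I would reduce modulo $u$: since $E(0) = p \cdot (\text{unit in }W(k))$, the $W(k)$-module $\huaM/u\huaM$ is a classical Dieudonn\'e-type object satisfying $p^{r}(\huaM/u\huaM) \subseteq \bar\varphi^{\ast}(\huaM/u\huaM)$, and decomposes canonically as a multiplicative part $(\huaM/u\huaM)^{\textnormal{m}} = \cap_{n}\bar\varphi^{n}(\huaM/u\huaM)$ plus a complementary part on which $\bar\varphi$ is topologically nilpotent. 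Using Nakayama and the $(p,u)$-adic completeness of $\huaS$ I would lift this to a $\varphi$-equivariant direct-sum decomposition $\huaM = \huaM^{\textnormal{m}} \oplus \huaM'$ with both summands finite free over $\huaS$, identify the lift of the multiplicative piece with the abstract intersection $\cap_{n}(\varphi^{\ast})^{n}\huaM$, and conclude that $\huaM^{\textnormal{m}}$ is multiplicative while $\huaM/\huaM^{\textnormal{m}} \cong \huaM'$ is in $\ModhuaS$.

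For part (3), by (2) $\huaM$ is nilpotent iff $\huaM^{\textnormal{m}}=0$. Since $\varphi((p,u)) \subseteq (p,u^{p}) \subseteq (p,u)$, $\varphi$ descends to a $\sigma$-semilinear endomorphism $\bar\varphi$ on the finite-dimensional $k$-vector space $\huaM/(p,u)\huaM$; the decomposition from (2) shows that the descending chain $\bar\varphi^{n}(\huaM/(p,u)\huaM)$ stabilizes at the mod-$(p,u)$ reduction of $\huaM^{\textnormal{m}}$, which gives the direction $\huaM^{\textnormal{m}}=0 \Rightarrow (\varphi^{\ast})^{n}\huaM \subseteq (p,u)\huaM$ for $n \gg 0$. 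Conversely, assuming $(\varphi^{\ast})^{n}\huaM \subseteq (p,u)\huaM$ and iterating $\varphi^{\ast}$ (using $\varphi((p,u)^{k}) \subseteq (p,u^{p^{k}})$) yields $(\varphi^{\ast})^{n+k}\huaM \subseteq (p,u^{p^{k}})\huaM$ for all $k$, so $\huaM^{\textnormal{m}} \subseteq \cap_{k}(p,u^{p^{k}})\huaM = p\huaM$; writing $\huaM^{\textnormal{m}} = p\huaN$ for $\huaN \subseteq \huaM$, one checks $\huaN$ is finite free, $\varphi$-stable, and multiplicative, so $\huaN \subseteq \huaM^{\textnormal{m}}$ by maximality, forcing $\huaM^{\textnormal{m}} \subseteq \cap_{k}p^{k}\huaM^{\textnormal{m}} = 0$ by $p$-adic separatedness. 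Finally, part (4) assembles the pieces: the first short exact sequence comes from (2), with maximality of $\huaM^{\textnormal{nil}} := \huaM/\huaM^{\textnormal{m}}$ following from the observation that any nilpotent quotient of $\huaM$ must kill the multiplicative image of $\huaM^{\textnormal{m}}$; the second short exact sequence is obtained by Cartier-dualizing the first sequence for $\huaM^{\vee}$ and invoking (1) to translate multiplicative/nilpotent into \'etale/unipotent.

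The main obstacle is part (2): explicitly showing that $\huaM^{\textnormal{m}}$ is a finite free $\huaS$-summand of $\huaM$ with $\varphi^{\ast}\huaM^{\textnormal{m}} = \huaM^{\textnormal{m}}$ requires a careful lifting of the classical Dieudonn\'e decomposition from $\huaM/u\huaM$ back to $\huaM$ in a $\varphi$-equivariant manner, together with the verification that the lift coincides with the purely abstract intersection $\cap_{n}(\varphi^{\ast})^{n}\huaM$; everything else reduces to linear algebra, duality, and Nakayama.
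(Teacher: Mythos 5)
Your treatment of parts (1), (3), and (4) is sound and runs parallel to the paper's, which for (1)--(3) simply outsources the work to Kisin (Prop. 1.2.11 of \cite{Kis09a}, Lemma 1.2.2 of \cite{Kis09b}) and only spells out (4) in detail. Your self-contained arguments for the dual matrix computation in (1), the $\cap_k(p,u^{p^k})\huaM = p\huaM$ reduction in (3), and the dualization in (4) are fine and agree with the spirit of the paper's remarks.

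The problem is in part (2). You claim to ``lift this to a $\varphi$-equivariant direct-sum decomposition $\huaM = \huaM^{\textnormal{m}} \oplus \huaM'$.'' Such a $\varphi$-stable complement to $\huaM^{\textnormal{m}}$ does not exist in general: the sequence $0 \to \huaM^{\textnormal{m}} \to \huaM \to \huaM^{\textnormal{nil}} \to 0$ is the analogue of a connected-\'etale sequence and need not split equivariantly over $\huaS$. Concretely, after choosing an $\huaS$-module splitting the matrix of $\varphi$ becomes block-upper-triangular $\left(\begin{smallmatrix} A & B \\ 0 & D \end{smallmatrix}\right)$ with $A$ invertible, and a $\varphi$-stable complement would require solving $\varphi(C)D - AC = B$ in $\Mat(\huaS)$, which one cannot do by a contraction argument because $\huaS$ is not perfect. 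Fortunately, the splitting you want is never needed. The correct route — which is what the paper does at the $\Sigma$-level in Theorem \ref{Mm} and Theorem \ref{SigmaM} — is: establish the isomorphism $\huaM^{\textnormal{m}}/u\huaM^{\textnormal{m}} \xrightarrow{\sim} (\huaM/u\huaM)_{\text{unit}}$, lift a basis of this to $\huaM^{\textnormal{m}}$ (Nakayama), lift a complementary basis of $(\huaM/u\huaM)_{\text{nil}}$ to $\huaM$, and observe that the combined family is an $\huaS$-basis of $\huaM$; this gives $\huaM^{\textnormal{m}}$ finite free and $\huaM/\huaM^{\textnormal{m}}$ finite free as a plain $\huaS$-module direct sum, with the $\varphi$-structure on $\huaM/\huaM^{\textnormal{nil}}$ induced by the quotient, not by a $\varphi$-stable complement. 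Once you replace the $\varphi$-equivariant splitting claim by this argument, your part (2) is correct.
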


\begin{proof}
For (1), (2), (3), they are proved in Proposition 1.2.11 in \cite{Kis09a}, and Lemma 1.2.2 in \cite{Kis09b}. Remark that suppose the matrix of $\varphi$ with respect to a basis of $\huaM$ is $A$, then there exists a matrix $A'$ with elements in $\huaS$ such that $AA' = E(u)^rId$. $\huaM$ is nilpotent if and only if $\varphi^{N}(A)\cdots \varphi(A)A$ converges to $0$ as $N \to \infty$, and it is unipotent if and only if $\Pi_{n=0}^{\infty} \varphi^{n}(A')=0$ (since $(A')^T$ is the matrix of $\varphi^{\vee}$ for $\huaM^{\vee}$ with respect to the dual basis ).

For (4), the maximality of $\huaM^{\textnormal{m}}$ is evident from the definition. $\huaM^{\textnormal{nil}}$ is defined as $\huaM/\huaM^{\textnormal{m}}$, it is easily shown to be nilpotent. To show maximality, suppose $\huaM_1$ is another nilpotent quotient defined by $0 \to \huaM_2 \to \huaM \to \huaM_1 \to 0$. We claim that the composite $\huaM^{\textnormal{m}} \to \huaM \to \huaM_1$ is zero map. Since otherwise, the nonzero image will be contained in $\huaM_1^{\textnormal{m}}$, contradicting that $\huaM_1$ is nilpotent. Since $\huaM^{\textnormal{nil}}$ is the cokernel of $\huaM^{\textnormal{m}} \to \huaM$, there is a surjective map $\huaM^{\textnormal{nil}} \to \huaM_1$. The map is clearly compatible with $\varphi$-structures, so it is a surjective morphism in the category $\Mod_{\huaS}^{\varphi}$. The kernel of this surjective morphism is also a module in $\Mod_{\huaS}^{\varphi}$ by Lemma \ref{huaMker}. Thus, $\huaM_1$ is a quotient of $\huaM^{\textnormal{nil}}$.

For the second short exact sequence, it is defined by taking the Cartier dual of the first sequence, i.e., $\huaM^{\textnormal{et}}= ((\huaM^{\vee})^{\textnormal{m}})^{\vee}$ and $\huaM^{\textnormal{uni}}= ((\huaM^{\vee})^{\textnormal{nil}})^{\vee}$.

To show the maximality of $\huaM^{\textnormal{et}}$, suppose $\huaM_1$ is an \'{e}tale quotient by $0 \to \huaM_2 \to \huaM \to \huaM_1 \to 0$. Then the composite $\huaM^{\textnormal{uni}} \to \huaM \to \huaM_1$ is zero map since the dual morphism $(\huaM_1)^{\vee} \to (\huaM^{\textnormal{uni}})^{\vee}$ has to be zero. Since $\huaM^{\textnormal{et}}$ is the cokernel of $\huaM^{\textnormal{uni}} \to \huaM$, there is a surjective morphism $\huaM^{\textnormal{et}} \to \huaM_1$. Now Lemma \ref{huaMker} implies that $\huaM^{\textnormal{et}}$ is maximal.

To show the maximality of $\huaM^{\textnormal{uni}}$, take any unipotent submodule $\huaM_1$ of $\huaM$. Then the composite $\huaM_1 \hookrightarrow \huaM \to \huaM^{\textnormal{et}}$ is zero map since the dual morphism $(\huaM^{\textnormal{et}})^{\vee} \to \huaM_1^{\vee}$ has to be zero map. Since $\huaM^{\textnormal{uni}}$ is the kernel of $\huaM \to \huaM^{\textnormal{et}}$, $\huaM_1$ injects into $\huaM^{\textnormal{uni}}$.
\end{proof}

Embed $\huaS \hookrightarrow W(R)$ by $u \mapsto [\underline{\pi}]$, where $[\underline{\pi}]$ is a Teichm\"{u}ller lift of $\underline{\pi} = (\pi, \pi_2, \ldots)$. Let $\bigOE =p$-adic completion of $\huaS[\frac{1}{u}]$, $\mathcal{E}$ the fraction field of $\bigOE$, $\mathcal{E}^{\text{ur}}$ the maximal unramified extension of $\mathcal{E}$ in $W(R)[\frac{1}{p}]$, $\widehat{\mathcal{E}^{\text{ur}}}$ the $p$-adic completion of $\mathcal{E}^{\text{ur}}$, and $\huaS^{\text{ur}} = \mathcal{O}_{\widehat{\mathcal{E}^{\text{ur}}}} \cap W(R)$.

We note that $G_{\infty}$ acts naturally on $\huaS^{\text{ur}}$ and $\mathcal{O}_{\widehat{\mathcal{E}^{\text{ur}}}}$, and fixes $\huaS$. By \cite{Fon90}, for $\huaM \in \ModFI_{\huaS}^{\varphi}$, let
$$ T_{\huaS}(\huaM) : =  \Hom_{\huaS, \varphi} (\huaM, \huaS^{\textnormal{ur}}[1/p]/\huaS^{\textnormal{ur}}),$$
it is a finite torsion $\Zp$-representation of $G_{\infty}$.
For $\huaM \in \Mod_{\huaS}^{\varphi}$ which is finite free of rank $d$, let
$$  T_{\huaS}(\huaM)  :=  \Hom_{\huaS, \varphi} (\huaM, \huaS^{\textnormal{ur}}) ,$$
it is a finite free $\Zp$-representation of $G_{\infty}$ of rank $d$.

\subsection{$S$-modules (Breuil modules)}

Recall that $S$ is the $p$-adic completion of the PD-envelope of $W(k)[u]$ with respect to the ideal $(E(u))$. It is a $W(k)[u]$-subalgebra of $K_{0}[\![u]\!]$, and $S = \{\sum_{i=0}^{\infty} a_i \frac{E(u)^i}{i!} | a_i \in W(k)[u], a_i \to 0 \ p-\text{adically}  \}$.
$S$ has a filtration $\{\Fil^{j}S\}_{j \geq 0}$, where $\Fil^{j}S$ is the $p$-adic completion of the ideal generated by all $\gamma_{i}(E(u))=\frac{E(u)^{i}}{i!}$ with $i\geq j$. Note that $S/\Fil^{j}S=W(k)[u]/(E(u)^{j})$ for $j \leq p$.
There is a Frobenius $\varphi: S \to S$ which acts on $W(k)$ via arithmetic Frobenius and sends $u$ to $u^p$, and there is a $W(k)$-linear differential operator $N$ (called the monodromy operator) such that $N(u) = -u$.
We have $\varphi(\Fil^j S) \subset p^jS$ for $1 \leq j \leq p-1$, and we denote $\varphi_j = \frac{\varphi}{p^j}: \Fil^j S \to S$.
We denote $c =\frac{\varphi(E(u))}{p}$ which is a unit in $S$, and $S_n= S/p^nS$.

Let $'\Mod_{S}^{\varphi}$ be the category whose objects are triples $(\M, \Fil^r \M, \varphi_r)$ where
$\bigM$ is an $S$-module,
$\Fil^r \M \subseteq \M$ is an $S$-submodule which contains $\Fil^r S \cdot \M$,
and $\varphi_r : \Fil^r \M \to \M$ is a $\varphi$-semi-linear map such that $\varphi_r(sx)=c^{-r}\varphi_r(s)\varphi_r(E(u)^rx)$ for $s \in \Fil^r S$ and $x \in \M$.
The morphisms in the category are $S$-linear maps preserving $\Fil^r$ and commuting with $\varphi_r$.
Let $\ModFI_{S}^{\varphi}$ be the full subcategory of  $'\Mod_{S}^{\varphi}$ with $\M \simeq \oplus_{i \in I} S_{n_i}$ with $I$ a finite set and $\varphi_r(\Fil^r \M)$ generates $\M$. Let $\Mod_{S_1}^{\varphi}$ be the full subcategory of $\ModFI_{S}^{\varphi}$ such that $\M \simeq \oplus_{i \in I} S_1$ with $I$ a finite set.
Let $\Mod_{S}^{\varphi}$ be the full subcategory of $'\Mod_{S}^{\varphi}$ such that $\M$ is a finite free $S$-module, $\varphi_r(\Fil^r \M)$ generates $\M$, and $\M/\Fil^r \M$ is $p$-torsion free. Note that the last condition implies that $(\M/p^n\M, \Fil^r \M/p^n\Fil^r \M, \varphi_r) \in \ModFI_{S}^{\varphi}$ for all $n$.

\begin{rem} \label{phiphir}
Note that for $\M \in \Mod_{S}^{\varphi}$, $\varphi_r$ induces a map, $\varphi: \M \to \M$, $\varphi(x) = \frac{\varphi_r(E(u)^r x)}{c^r}$ (so $\varphi_r =\frac{\varphi}{p^r}$). It is easy to check that one can change the triple $(\M, \Fil^r \M, \varphi_r)$ in the definition of $\Mod_{S}^{\varphi}$ to a new triple $(\M, \Fil^r \M, \varphi)$ where $\varphi: \M \to \M$ such that $\varphi(\Fil^r\M) \subseteq p^r\M$ and $\varphi(\Fil^r\M)$ generates $p^r\M$. I.e., $\varphi_r$ and $\varphi$ provide equivalent information. In the following, we will freely use both of them.
\end{rem}

It is easy to check that $A_{\text{cris}}$  is an object in $'\Mod_{S}^{\varphi}$. Thus by \cite{Bre99a}, for $\M \in \ModFI_{S}^{\varphi}$, let
$$T_{\text{cris}}(\M) := \Hom_{'\Mod_{S}^{\varphi}} (\M, A_{\text{cris}}[1/p]/A_{\text{cris}}),$$
it is a finite torsion $\Zp$-representation of $G_{\infty}$.
And for $\M \in \Mod_{S}^{\varphi}$ which is finite free of rank $d$, let
$$T_{\text{cris}}(\M) := \Hom_{'\Mod_{S}^{\varphi}} (\M, A_{\text{cris}}),$$
it is a finite free $\Zp$-representation of $G_{\infty}$ of rank $d$.

A sequence $0 \to \M_1 \to \M \to \M_2 \to 0$ in $\Mod_{S}^{\varphi}$ is called short exact if it is short exact as a sequence of $S$-modules, and
the sequence on filtraions  $0 \to \Fil^r\M_1 \to \Fil^r\M \to \Fil^r\M_2 \to 0$ is also short exact. In this case, we call $\M_2$ a quotient of
$\M$.

For $\M \in \Mod_{S}^{\varphi}$, the Cartier dual of $\M$ is defined by $\M^{\vee}:= \Hom_S(\M, S)$,
$$\Fil^r \M^{\vee} := \{ f \in \M^{\vee}, f(\Fil^r \M) \subseteq \Fil^r S \},$$
and
$$\varphi_r^{\vee} : \Fil^r \M^{\vee} \to \M^{\vee}, \varphi_r^{\vee}(f)(\varphi_r(x)) = \varphi_r(f(x)), \forall f \in \Fil^r \M^{\vee}, x \in \Fil^r \M$$.

Note that $\varphi_r^{\vee}(f)$ is well defined since $\varphi_r(\Fil^r \M)$ generates $\M$.

By Proposition V3.3.1 of \cite{Car05}, the functor $\M \to \M^{\vee}$ induces a duality on the category $\Mod_{S}^{\varphi}$, and it transforms short exact sequences to short exact sequences.

\begin{defn} \label{defunip}
For $\M \in \Mod_{S}^{\varphi}$,
\begin{enumerate}
\item  $\M$ is called \'{e}tale if $\Fil^r \M =\M$, it is called multiplicative if $\Fil^r \M =\Fil^rS \M$.
\item $\M$ is called nilpotent if it has no nonzero multiplicative submodules, it is called unipotent if it has no nonzero \'{e}tale quotients.
\end{enumerate}

\end{defn}

We record a useful lemma here.

\begin{lemma}[\cite{Liu08}] \label{Filr}
For $\M \in \Mod_S^{\varphi}$ (resp. $\Mod_{S_1}^{\varphi}$), there exists $\bolda = (\alpha_1, \ldots, \alpha_d)^{T}$ with $\alpha_i \in \Fil^r \M$, such that
\begin{enumerate}
\item $\Fil^r \M = \oplus_{i=1}^{d} S \alpha_i + \Fil^p S \M$,
\item $\oplus_{i=1}^{d} S \alpha_i \supseteq E(u)^r \M$, and $\varphi_r(\bolda)$ is a basis of $\M$.
\end{enumerate}
\end{lemma}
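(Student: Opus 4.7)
The plan is to choose the $\alpha_i$'s by lifting a basis of $\M/(p,u)\M$ through $\varphi_r$, and then verify both statements using the finite-height structure implicit in the definition of $\Mod_S^{\varphi}$. Since $\varphi_r(\Fil^r\M)$ generates $\M$ and $\M/(p,u)\M$ is a finite-dimensional $k$-vector space, first select $\alpha_1, \ldots, \alpha_d \in \Fil^r\M$ whose images $e_i := \varphi_r(\alpha_i)$ project to a $k$-basis of $\M/(p,u)\M$. Nakayama's lemma upgrades $\bolde = (e_1, \ldots, e_d)^T$ to an $S$-basis of $\M$, giving the basis claim in (2). The same argument with $S$ replaced by $S_1$ handles the $\Mod_{S_1}^{\varphi}$ case.

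For the remaining containment in (2), write $\bolda = B\bolde$ with $B \in \Mat_d(S)$. Using the identity $\varphi = p^r\varphi_r$ on $\Fil^r\M$ from Remark \ref{phiphir}, apply $\varphi$ to both sides to obtain $\varphi(B)\,\varphi(\bolde) = p^r\bolde$, so $\varphi(B)\,M = p^r\,\mathrm{Id}$ where $M$ encodes $\varphi$ on $\bolde$. To produce $A \in \Mat_d(S)$ with $AB = E(u)^r\,\mathrm{Id}$, pass to the associated Kisin module $\huaM$ realizing $\M$ as $S \otimes_{\varphi,\huaS}\huaM$: the finite-height condition on $\huaM$ supplies a matrix $A'$ satisfying $A'(\text{matrix of }\varphi_{\huaM}) = E(u)^r\,\mathrm{Id}$, and base-changing to $S$ gives the desired $A$. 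Reading $AB\bolde = E(u)^r\bolde$ as $A\bolda = E(u)^r\bolde$ yields $E(u)^r\M \subseteq \oplus S\alpha_i$.

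For (1), observe that $\Fil^r S = E(u)^r S + \Fil^p S$, since for $r \leq i < p$ the divided power $\gamma_i(E(u)) = E(u)^i/i!$ is a unit multiple of $E(u)^i \in E(u)^r S$. Combined with (2), this gives $\Fil^r S \cdot \M \subseteq \oplus S\alpha_i + \Fil^p S \cdot \M$ immediately. For a general $y \in \Fil^r\M$, the strategy is a successive-approximation argument: expand $\varphi_r(y) = \sum t_i e_i$ and correct $y$ by elements of the form $\sum s_i \alpha_i$ so that the residue lives in deeper and deeper parts of $\Fil^p S \cdot \M$, exploiting that the $S$-linear map $S \otimes_{\varphi, S} \Fil^r\M \to \M$ induced by $s \otimes y \mapsto s\varphi_r(y)$ is surjective. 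The main obstacle is exactly this last step: because $\varphi\colon S \to S$ is far from surjective (for instance $u$ is not in its image), one cannot directly solve $\varphi(s_i) = t_i$, and convergence of the iterative correction in the $\Fil^p S$-adic topology, together with using the bound $r < p$ to control cross-terms with divided powers, is the delicate piece of the argument.
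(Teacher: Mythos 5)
Your proposal runs into a circularity at the point where you produce the matrix $A$ with $AB = E(u)^r Id$: you invoke a Kisin module $\huaM$ realizing $\M \simeq S\otimes_{\varphi,\huaS}\huaM$, but the existence of such an $\huaM$ for a general $\M \in \Mod_S^{\varphi}$ is precisely the essential surjectivity of $\M_{\huaS}$ --- Theorem \ref{mtoM} in this paper, and the Liu/Caruso--Liu equivalence when $r<p-1$ --- and those proofs all rest on this very lemma. Moreover, once $\bolde = \varphi_r(\bolda)$ is a basis, the existence of $A \in \Mat_d(S)$ with $A\bolda = E(u)^r\bolde$ is \emph{equivalent} to the containment $\oplus S\alpha_i \supseteq E(u)^r\M$ you are trying to prove, so passing to a hypothetical $\huaM$ supplies no new information. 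The successive-approximation argument for $\Fil^r\M \subseteq \oplus S\alpha_i + \Fil^pS\M$ is also left unfinished; you flag it yourself as ``the delicate piece,'' and a naive iteration has no reason to converge, exactly because $\varphi\colon S \to S$ is far from surjective and a single correction step need not shrink the error in the relevant filtration.

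The root cause is the order of construction. The proof this paper defers to (Proposition 4.1.2 of \cite{Liu08}, adapted here to $r=p-1$) first builds $\bolda$ out of the module-theoretic structure of $\Fil^r\M/\Fil^pS\M$ over $S/\Fil^pS \simeq W(k)[u]/E(u)^p$, so that $\Fil^r\M = \oplus S\alpha_i + \Fil^pS\M$ and $\oplus S\alpha_i \supseteq E(u)^r\M$ hold by construction; only afterwards is $\varphi_r(\bolda)$ shown to be a basis, and that last step is where the $r=p-1$ case needs extra care, since $p \nmid \varphi_r(\Fil^pS)$ there. Your proposal reverses this order: you secure the basis claim first by Nakayama --- which is fine, since $(p,u)$ lies in the Jacobson radical of $S$ --- and then attempt to recover (1) and the containment. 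But those are genuinely statements about the $S$-module $\Fil^r\M$, and since you constrain the $\alpha_i$ only through their images $\varphi_r(\alpha_i)$ while $\varphi_r$ need not be injective on $\Fil^r\M$, your choice of lifts gives no control over $\oplus S\alpha_i$ itself. As written the proposal has a real gap.
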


\begin{proof}
Let $\M \in \Mod_S^{\varphi}$, then in Proposition 4.1.2 of \cite{Liu08}, the lemma was proved for $r < p-1$.
In fact , the proof still works for $r =p-1$, the only difference is the last sentence. Before the last sentence, we have already proved that there exists $\bolda$, such that $\Fil^r \M =\oplus_{i=1}^{d} S \alpha_i + \Fil^p S \M$ and $\oplus_{i=1}^{d} S \alpha_i \supseteq E(u)^r \M$, so we only need to show that $\varphi_r(\bolda)$ is a basis of $\M$. In the proof of \cite{Liu08}, since $r< p-1$, we have $p \mid \varphi_r(\Fil^p S)$, but for $r=p-1$, this is invalid. But because we have that $\varphi_r(\Fil^r \M)$ generates $\M$, $\varphi_r(\bolda)$ together with $\varphi_r(\Fil^pS \M)$ generates $\M$. Since $\varphi_r(sm)=\varphi_r(s) \frac{\varphi_r(E(u)^rm)}{c^r}$ for $s \in \Fil^pS$, $m \in \M$, and $\varphi_r(E(u)^rx)$ is a linear combination of $\varphi_r(\bolda)$ (because $\oplus_{i=1}^{d} S \alpha_i \supseteq E(u)^r \M$), thus $\varphi_r(\bolda)$ alone generates $\M$, and we conclude the proof for the $r=p-1$ case.

For Let $\M \in \Mod_{S_1}^{\varphi}$, just apply Proposition 2.2.1.3 of \cite{Bre99a} (which is indeed valid for any $r \leq p-1$) and use the above argument.
\end{proof}

\begin{lemma} \label{Smult}
$\M \in \Mod_S^{\varphi}$ is \'{e}tale (resp. multiplicative) if and only if $1\otimes \varphi: S \otimes_{\varphi, S} \M \to \M$, or equivalently, $1\otimes \varphi_r: S\otimes_{\varphi, S} E(u)^r\M \to \M$, is an isomorphism onto $p^r \M$ (resp. $\M$).
\end{lemma}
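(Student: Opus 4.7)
The plan is to exploit the matrix framework provided by Lemma \ref{Filr}. First I would fix $\bolda = (\alpha_1, \ldots, \alpha_d)^T$ as in that lemma, so $\Fil^r\M = \oplus_{i=1}^{d} S \alpha_i + \Fil^p S \cdot \M$, $E(u)^r\M \subseteq \oplus_{i=1}^{d} S\alpha_i$, and $\bolde := \varphi_r(\bolda)$ is an $S$-basis of $\M$. Writing $\bolda = X\bolde$ and $E(u)^r\bolde = Y\bolda$ with $X,Y \in \Mat_d(S)$ gives $YX = E(u)^r I$ and hence $\det X \cdot \det Y = E(u)^{rd}$. A direct computation using the $\varphi$-semi-linearity of $\varphi_r$ yields $\varphi(\bolde) = c^{-r}\varphi(Y)\bolde$, so the matrix of $\varphi$ on $\bolde$ is $A := c^{-r}\varphi(Y)$. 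It follows that $1\otimes\varphi$ is an isomorphism onto $p^r\M$ (resp.\ $\M$) if and only if $A \in p^r\GL_d(S)$ (resp.\ $A \in \GL_d(S)$).

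For the equivalence between the two maps in the statement, I would note the $S$-linear isomorphism $S\otimes_{\varphi,S}\M \to S\otimes_{\varphi,S} E(u)^r\M$, $s\otimes x \mapsto s\otimes E(u)^r x$, under which $1\otimes\varphi_r$ is identified with $c^r \cdot (1\otimes\varphi)$; since $c^r$ is a unit, the two maps share an image and are simultaneously isomorphisms. The forward directions of the lemma are then immediate: if $\Fil^r\M = \M$, then $\varphi(\M) = p^r\varphi_r(\M)$ has $S$-span $p^r\M$; and if $\Fil^r\M = \Fil^r S\cdot\M$, then the axiom $\varphi_r(sm) = \varphi_r(s)\varphi(m)$ for $s\in\Fil^rS$, $m\in\M$ shows $\varphi_r(\Fil^r\M) \subseteq S\cdot\varphi(\M)$, and since the left-hand side generates $\M$ we get $S\cdot\varphi(\M) = \M$.

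For the backward directions, I would rely on the key observation that, for $a\in S$, $a\in S^\times$ if and only if $\varphi(a)\in S^\times$. This follows because $S$ is $p$-adically complete and $u$ is nilpotent in $S/pS$ (so $S/pS$ is local with residue field $k = S/(p,u)$); Hensel's lemma identifies $S^\times$ with the preimage of $k^\times$ under $S \twoheadrightarrow k$, and $\varphi$ induces the Frobenius on $k$, which is bijective. If $1\otimes\varphi$ is an isomorphism onto $p^r\M$, then $A \in p^r\GL_d(S)$, so $\varphi(\det Y) = c^{rd}\det A \in p^{rd}S^\times$; combined with $\varphi(\det X)\varphi(\det Y) = \varphi(E(u))^{rd} = p^{rd}c^{rd}$, this forces $\varphi(\det X) \in S^\times$, hence $\det X \in S^\times$ by the observation. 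Thus $X \in \GL_d(S)$ and $\bolde = X^{-1}\bolda \in \oplus_{i=1}^{d} S\alpha_i \subseteq \Fil^r\M$, so $\Fil^r\M = \M$, i.e., \'{e}tale.

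Finally, if $1\otimes\varphi$ is an iso onto $\M$, the same chain gives $A \in \GL_d(S)$ and $\det Y \in S^\times$. Reducing the identity $YX = E(u)^r I$ modulo $\Fil^p S$ into the ring $R := S/\Fil^p S = W(k)[u]/E(u)^p$, the image of $Y$ is invertible in $\Mat_d(R)$, so $X \equiv E(u)^r Y^{-1} \pmod{\Fil^p S}$ entrywise. Each $X_{ij}$ therefore lies in $E(u)^r S + \Fil^p S = \Fil^r S$ (using $r\leq p-1$), so $\alpha_i \in \Fil^r S\cdot\M$ and $\Fil^r\M \subseteq \Fil^r S\cdot\M$, giving multiplicativity. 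I expect the hardest step to be this last one: transferring invertibility from $\varphi(Y)$ to $Y$ via the unit observation, and then solving for $X$ in the explicit Artinian quotient $R$ to extract divisibility by $E(u)^r$ in each entry.
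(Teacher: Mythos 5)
Your proof is correct and follows essentially the same approach as the paper: choose $\bolda$ and $\bolde = \varphi_r(\bolda)$ via Lemma \ref{Filr}, note that $1\otimes\varphi$ has matrix $c^{-r}\varphi(Y)$ where $YX = E(u)^r I$, and transfer invertibility back from $\varphi(\det Y)$ or $\varphi(\det X)$ to $\det Y$ or $\det X$ using the fact that $s \in S^{\times}$ iff $\varphi(s) \in S^{\times}$ (which the paper asserts and you justify). One small streamlining for the backward multiplicative direction: once you know $\det Y \in S^{\times}$ you already have $Y^{-1} \in \Mat_d(S)$ over $S$ itself, so $X = E(u)^r Y^{-1}$ has all entries in $E(u)^r S \subseteq \Fil^r S$ directly, and the reduction to $R = S/\Fil^p S$ is unnecessary.
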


\begin{proof}
Necessity is clear. To prove sufficiency, denote the image of the map $1\otimes \varphi$ as $\varphi^{\ast}\M$, and suppose $\varphi^{\ast}\M= p^r \M$. As in Lemma \ref{Filr}, suppose $\bolda =A \bolde$, with $\bolde =\varphi_r(\bolda)$ a basis, and $A \in \Mat(S)$. Since $\varphi(\M)$ generates $p^r\M$, $\varphi_r(E(u)^r \bolde) = \varphi_r(A'A\bolde) =\varphi(A') \bolde$ generates $p^r \M$ (here $A'$ is the matrix such that $AA' =E(u)^rId$). That means $\varphi(A')$ is $p^r$ times an invertible matrix. Since $\varphi(A)\varphi(A')=\varphi(E(u)^rId)=p^rc^rId$, $\varphi(A)$ is an invertible matrix. We claim that $A$ is also an invertible matrix, which guarantees that $\Fil^r \M =\M$.
To prove the claim, we need to show that $\det(A)$ is invertible. But in the ring $S$, an element $s$ is invertible if and only if $\varphi(s)$ is, and $\varphi(\det(A)) = \det (\varphi(A))$ is invertible, thus our claim is true. The proof for the multiplicative case is similar.
\end{proof}

\subsection{$\Sigma$-modules}

Recall (cf. \cite{Bre99a} 3.2.1) that the ring $\Sigma = W(k)[\![X]\!][u]/(u^{ep}-pX) = W(k)[\![Y]\!][u]/(E(u)^p -pY)$,
and we have an injection $\Sigma \hookrightarrow S$ by sending $X$ to $u^{ep}/p$ (or $Y$ to $E(u)^p/p$) and $u$ to $u$.
Via this injection, $\Sigma$ is stable under the Frobenius and monodromy on $S$. $\Sigma$ is equipped with the induced filtration $\Fil^i \Sigma=\Sigma \cap \Fil^i S$.
For $0 \leq i \leq p-1$, we have $\Fil^i \Sigma =(E(u)^i, Y)$ and $\varphi(\Fil^i \Sigma) \subseteq p^i\Sigma$.
$\Sigma$ is a Noetherian local ring with maximal ideal $(p, u, X)$ and it is complete with respect to the $(p, u, X)$-adic topology. Note that $c =\frac{\varphi(E(u))}{p}$ is also a unit in $\Sigma$, and denote $\Sigma_n =\Sigma/p^n\Sigma$.

Let $'\Mod_{\Sigma}^{\varphi}$ be the category whose objects are triples $(M, \Fil^rM, \varphi_r)$, where $M$ is a $\Sigma$-module,
$\Fil^r M \subseteq M$ is a $\Sigma$-submodule which contains $\Fil^r\Sigma\cdot M$, and $\varphi_r: \Fil^r M \to M$ is a Frobenius-semi-linear map such that $\varphi_r(sx) =c^{-r}\varphi_r(s)\varphi_r(E(u)^rx)$ for $s\in \Fil^r \Sigma$
and $x \in M$. We can define the categories $\ModFI_{\Sigma}^{\varphi}$, $\Mod_{\Sigma_1}^{\varphi}$ and $\Mod_{\Sigma}^{\varphi}$ similarly as $\ModFI_{S}^{\varphi}$, $\Mod_{S_1}^{\varphi}$ and $\Mod_{S}^{\varphi}$ in the previous subsection. Morphisms, Cartier duals, short exact
sequences and quotients in the category $\Mod_{\Sigma}^{\varphi}$ are defined analogously. Following a similar proof as that of Proposition V3.3.1 of \cite{Car05}, the Cartier dual functor also induces a duality on the category $\Mod_{\Sigma}^{\varphi}$, and it transforms short exact sequences to short exact sequences. Note that we can also change the $\varphi_r$ in $\Mod_{\Sigma}^{\varphi}$ to $\varphi$ in the same way as in Remark
\ref{phiphir}. For $M \in \Mod_{\Sigma}^{\varphi}$, let $\varphi^{\ast}M$ be the $\Sigma$-span of $\varphi(M)$, and $(\varphi^{\ast})^nM$ the $\Sigma$-span of $\varphi^n(M)$.

\begin{thm} \label{Mm}
For $M \in \Mod_{\Sigma}^{\varphi}$, the submodule $M^{\textnormal{m}}: = \cap_{n=1}^{\infty} (\varphi^{\ast})^{n}M$ is a finite free $\Sigma$-module.
\end{thm}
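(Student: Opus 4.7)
The plan is to mimic the proof in the Kisin-module case (Proposition~\ref{huaMexact}(2)), adapted to the ring $\Sigma = W(k)[\![X]\!][u]/(u^{ep}-pX)$. Finite generation of $M^{\textnormal{m}}$ comes for free from the Noetherianity of $\Sigma$ together with the fact that $M^{\textnormal{m}}$ is a $\Sigma$-submodule of the finitely generated module $M$. The content of the theorem is therefore freeness.

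To pin down a basis I would fix $\bolde = (e_1, \ldots, e_d)^T$ a basis of $M$ over $\Sigma$, write $\varphi\bolde = A\bolde$ with $A \in \Mat_d(\Sigma)$, and study the descending chain $(\varphi^{\ast})^n M$ via the matrices $A_n := \varphi^{n-1}(A)\cdots\varphi(A)A$, whose rows generate $(\varphi^{\ast})^n M$ as a $\Sigma$-module. A key point is that $\varphi$ sends $u$ to $u^p$ and $X$ to $p^{p-1}X^p$, so modulo any fixed power of the ideal $(u, X)$ the iterate $\varphi^n(A)$ eventually becomes congruent to the reduction $\bar A \in \Mat_d(W(k))$ of $A$ modulo $(u, X)$. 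I would then apply elementary divisor theory to $\bar A$ to split $W(k)^d$ as a direct sum of an invertible block (the would-be multiplicative part) and a block with spectrum in $pW(k)$, and lift this decomposition to $\Sigma$ by successive approximation using the $\mathfrak{m}$-adic completeness of $\Sigma$ with $\mathfrak{m}=(p,u,X)$. The invertible block will yield a finite free $\Sigma$-submodule $N \subseteq M$ which is stable under $\varphi^{\ast}$ and satisfies $\varphi^{\ast}N = N$. One then checks $N = M^{\textnormal{m}}$: the inclusion $N \subseteq \cap_n (\varphi^{\ast})^n M$ is immediate from $\varphi^{\ast}N = N$, while the reverse follows from the fact that iterating $\varphi^{\ast}$ on the complementary block pushes its entries into arbitrarily high powers of $\mathfrak{m}$, so nothing from that part survives the intersection.

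The main obstacle will be the lifting step, because $\Sigma$ is not a regular local ring—it is a proper quotient of $W(k)[\![X]\!][u]$—so the cleanest forms of the Cohen structure theorem and Hensel's lemma do not apply verbatim. Nevertheless, $\Sigma$ is Noetherian, local, and $\mathfrak{m}$-adically complete, so successive approximation modulo increasing powers of $\mathfrak{m}$ should still succeed, provided one respects the relation $u^{ep} = pX$ at each step; the invertibility of the distinguished block of $\bar{A}$ keeps the linear equations governing each approximation step solvable. A secondary subtlety is verifying that the submodule produced by the lifting is exactly $\cap_n (\varphi^{\ast})^n M$, rather than merely some maximal multiplicative submodule constructed \emph{ad hoc}: this is precisely where the $p$-adic contraction of the non-invertible block, combined with $\mathfrak{m}$-adic separation, does the essential work.
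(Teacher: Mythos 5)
Your plan differs structurally from the paper's: you propose to \emph{construct} a finite free $\Sigma$-submodule $N$ that is $\varphi^{\ast}$-stable with $\varphi^{\ast}N=N$ by lifting the unit block of $\bar A$ from $W(k)=\Sigma/(u,X)$ up to $\Sigma$, and only afterwards identify $N$ with $\cap_n(\varphi^{\ast})^n M$. The paper does the reverse: it works with $M^{\textnormal{m}}=\cap_n(\varphi^{\ast})^n M$ directly, proves that the natural map $M^{\textnormal{m}}/(u,X)M^{\textnormal{m}}\to (M/(u,X)M)_{\text{unit}}$ is an isomorphism (injectivity via Artin--Rees and the surjectivity of $\varphi$ on $M^{\textnormal{m}}/(u,X)M^{\textnormal{m}}$; surjectivity by a convergence argument building an element of $M^{\textnormal{m}}$ mapping to a prescribed $x_0$), and then lifts a basis of $(M/(u,X)M)_{\text{unit}}$ into $M^{\textnormal{m}}$ by Nakayama, completes it by arbitrary lifts of a basis of the nilpotent quotient to a full basis of $M$, and reads off freeness. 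Crucially, the paper never needs the decomposition of $M/(u,X)M$ to lift to a $\varphi$-stable direct sum decomposition of $M$.

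This is where the genuine gap in your proposal sits. The ``successive approximation modulo powers of $\mathfrak m$'' that you flag as the main obstacle is not a routine Hensel-type step: you would need to produce a $\varphi^{\ast}$-stable complement to $N$ inside $M$ (otherwise the reverse containment $\cap_n(\varphi^{\ast})^n M\subseteq N$ does not follow from ``the complementary block contracts,'' since you cannot decompose an arbitrary $x\in M^{\textnormal{m}}$ along the splitting without $\varphi$-stability of both pieces). Lifting a unit-root direct summand of a $\varphi$-module from the residue field up to a complete local ring is a real theorem with a real Newton-iteration proof; it is not automatic, and you have only asserted it. The paper's Nakayama plus basis-completion argument shows you can dispense with it entirely. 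Two smaller slips: (i) $\varphi^n(A)\bmod (u,X)$ is $\sigma^n(\bar A)$, where $\sigma$ is the arithmetic Frobenius on $W(k)$, not $\bar A$ itself; (ii) ``elementary divisor theory'' applies to $W(k)$-linear maps, whereas $\varphi$ is only $\sigma$-semilinear, so you need a semilinear avatar of Fitting's lemma. The paper supplies exactly this in Lemma~\ref{decomposition}, proving it by passing to $M/p^n$ (where Fitting applies) and taking inverse limits; this is a step you would also need, since $W(k)$ itself is not Artinian.
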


\begin{lemma} \label{decomposition}
For a finite free $W(k)$-module $M$ with a Frobenius-semi-linear $\varphi$-action, we have a decomposition $M= M_{\textnormal{unit}} \oplus M_{\textnormal{nil}}$, where $\varphi$ is bijective on the first part, and topologically nilpotent on the second part, namely $\varphi^k(M_{\textnormal{nil}}) \to 0$ as $k \to \infty$.
\end{lemma}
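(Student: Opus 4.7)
My plan is to build the decomposition by assembling Fitting-type decompositions at each finite level $M/p^{n}M$ and passing to the $p$-adic limit; the hypothesis that $k$ is perfect is the crucial input that turns kernels and images of powers of $\varphi$ into honest submodules despite semi-linearity.

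For each $n\geq 1$, set $M_{n}:=M/p^{n}M$, a finite length module over the local Artinian ring $W_{n}(k):=W(k)/p^{n}W(k)$. Because $k$ is perfect, $\varphi$ restricts to an automorphism of $W_{n}(k)$, so for any $W_{n}(k)$-submodule $N\subseteq M_{n}$, the image $\varphi(N)$ (a priori only an additive subgroup) is automatically closed under the $W_{n}(k)$-action, hence a submodule; likewise for kernels of powers of $\varphi$. Fitting's lemma applies to the semi-linear endomorphism $\varphi$ of $M_{n}$: the descending chain of $W_{n}(k)$-spans
\[
M_{n}\supseteq\varphi^{\ast}M_{n}\supseteq(\varphi^{\ast})^{2}M_{n}\supseteq\cdots
\]
and the ascending chain $\ker\varphi\subseteq\ker\varphi^{2}\subseteq\cdots$ both stabilize, producing a canonical decomposition $M_{n}=M_{n}^{\textnormal{unit}}\oplus M_{n}^{\textnormal{nil}}$ on which $\varphi$ acts bijectively, respectively nilpotently.

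Because this Fitting decomposition is canonical and functorial in $M_{n}$, the reduction maps $M_{n+1}\twoheadrightarrow M_{n}$ carry $M_{n+1}^{\textnormal{unit}}$ onto $M_{n}^{\textnormal{unit}}$ and similarly for the nilpotent pieces. Passing to the $p$-adic limit (which recovers $M$, since $M$ is finite free over the $p$-adically complete ring $W(k)$) yields
\[
M=\varprojlim_{n}M_{n}=\Bigl(\varprojlim_{n}M_{n}^{\textnormal{unit}}\Bigr)\oplus\Bigl(\varprojlim_{n}M_{n}^{\textnormal{nil}}\Bigr)=:M_{\textnormal{unit}}\oplus M_{\textnormal{nil}}.
\]
Each summand is a closed, finitely generated $W(k)$-submodule of $M$; since its reduction modulo $p$ is a direct summand of the $k$-vector space $M/pM$, both summands are in fact finite free, with ranks given by the (eventually stable) dimensions of the mod-$p$ Fitting pieces.

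The two required properties are then inherited from the finite levels: bijectivity of $\varphi$ on every $M_{n}^{\textnormal{unit}}$ upgrades to bijectivity on the $p$-adic limit $M_{\textnormal{unit}}$, while nilpotence of $\varphi$ on each $M_{n}^{\textnormal{nil}}$ — say $\varphi^{N_{n}}(M_{n}^{\textnormal{nil}})=0$ — translates exactly to $\varphi^{N_{n}}(M_{\textnormal{nil}})\subseteq p^{n}M$, yielding $\varphi^{k}(M_{\textnormal{nil}})\to 0$ as $k\to\infty$. The main subtlety, and essentially the only one, is the passage from Fitting's lemma for linear endomorphisms to the semi-linear setting: this is precisely where perfection of $k$ (hence of $W_{n}(k)$ as a $\varphi$-ring) is used to guarantee that the decomposition at each level is a splitting of $W_{n}(k)$-modules rather than merely of abelian groups.
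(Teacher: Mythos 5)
Your proposal is correct and follows essentially the same strategy as the paper: apply the semi-linear Fitting decomposition on each Artinian quotient $M/p^nM$ (using perfection of $k$ so that $W(k)/p^n$-spans of images of $\varphi$ are honest submodules on which $\varphi$ becomes bijective), check the decompositions are compatible with the reduction maps, and pass to the $p$-adic inverse limit. The only substantive difference is the compatibility step: you invoke canonicity/functoriality of the Fitting decomposition to conclude the reductions $M_{n+1}^{\textnormal{unit}}\twoheadrightarrow M_{n}^{\textnormal{unit}}$ and $M_{n+1}^{\textnormal{nil}}\twoheadrightarrow M_{n}^{\textnormal{nil}}$ are surjective with the right kernels, whereas the paper verifies the isomorphisms $(M/p^{n})_{\textnormal{nil}}\otimes W(k)/p^{n-1}\simeq (M/p^{n-1})_{\textnormal{nil}}$ directly by an explicit lift-and-adjust computation. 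Your shortcut is fine, but you should at least note that surjectivity onto each piece is not just formal functoriality: it uses that $M_{n+1}\to M_n$ is surjective and that the Fitting pieces are complementary direct summands, so the image of each piece must land onto the corresponding piece. One small imprecision: kernels of powers of a semi-linear $\varphi$ are automatically $W_n(k)$-submodules regardless of perfection (since $\varphi(ax)=\sigma(a)\varphi(x)$); perfection, i.e.\ surjectivity of the Frobenius $\sigma$ on $W_n(k)$, is what is actually needed to show that $\varphi$ is surjective (not merely span-surjective) on the stabilized span $(\varphi^{\ast})^{N}M_n$, which is the crux of the Fitting argument.
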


\begin{proof}

Note that for any $n$, $W(k)/p^n$ is Artinian and the Frobenius action is bijective, thus by Fitting Lemma, we have a decomposition $M/p^n = (M/p^n)_{\text{unit}} \oplus (M/p^n)_{\text{nil}}$, where $\varphi$ is bijective on the first part and nilpotent on the second part.

Now we show that these decompositions are compatible with each other, namely $(M/p^n)_{\text{nil}} \otimes W(k)/p^{n-1} \simeq (M/p^{n-1})_{\text{nil}}$, and $(M/p^n)_{\text{unit}} \otimes W(k)/p^{n-1} \simeq (M/p^{n-1})_{\text{unit}}$.

It suffices to prove the first one. Injectivity is clear because the map is induced from the isomorphism $M/p^n\otimes W(k)/p^{n-1} \simeq M/p^{n-1}$. Surjectivity: suppose $x \in (M/p^{n-1})_{\text{nil}}$, and take any lift $\hat{x} \in M/p^n$, then $\varphi^k(\hat{x}) = p^{n-1}y$ for $k$ large enough and here $y \in M/p^n$. Decompose $y =y_1+y_2$ where $y_1 \in (M/p^n)_{\textnormal{unit}}, y_2 \in (M/p^n)_{\textnormal{nil}}$, then $\varphi^N (\hat{x}) =p^{n-1}\varphi^{N-k} (y_1) = p^{n-1}\varphi^N(y_4)$ for $N$ large enough and some $y_4 \in (M/p^n)_{\textnormal{unit}}$, then we have $\varphi^N (\hat{x} -p^{n-1}y_4) =0$, thus $\hat{x} - p^{n-1}y_4 \in (M/p^n)_{\text{nil}}$, and it maps to $x$.

Now for a finite free $M$, let $M_{\text{nil}} =: \varprojlim (M/p^n)_{\text{nil}}$, and
$M_{\text{unit}} =:\varprojlim (M/p^n)_{\text{unit}}$. Then apparently $M_{\text{nil}} \cap M_{\text{unit}} =0$. For any $x \in M$, we have unique decompositions $x \bmod p^n = y_n + z_n$ which are compatible with each other, take inverse limit we have that $M = M_{\text{nil}} + M_{\text{unit}}$, thus $M = M_{\text{unit}} \oplus M_{\text{nil}}$.

\end{proof}

\begin{proof} \textbf{Proof of Theorem \ref{Mm}}: Apply Lemma \ref{decomposition} to $M/(u, X)M$, then \\$M/(u ,X)M \simeq (M/(u ,X)M)_{\text{unit}} \oplus (M/(u ,X)M)_{\text{nil}}$, where $\varphi$ is bijective on the first part, and topologically nilpotent on the second. We naturally have a map \\$M^{\textnormal{m}}/(u, X)M^{\textnormal{m}} \to (M/(u, X))_{\text{unit}}$, and we claim that it is an isomorphism. To show injectivity, first note that the map $\varphi: M^{\textnormal{m}}/(u, X)M^{\textnormal{m}} \to M^{\textnormal{m}}/(u, X)M^{\textnormal{m}}$ is surjective, thus bijective since $W(k)$ is Noetherian and $M^{\textnormal{m}}/(u, X)M^{\textnormal{m}}$ is finitely generated. Now if $x \in M^{\textnormal{m}}$ and $x \in (u, X)M$, then apparently $\varphi^k(x)$ converges to $0$ in $M$ (by the $(p, u, X)$-adic topology), thus also in $M^{\textnormal{m}}$ by Artin-Rees Lemma. But because of the isomorphism $\varphi: M^{\textnormal{m}}/(u, X)M^{\textnormal{m}} \to M^{\textnormal{m}}/(u, X)M^{\textnormal{m}}$, $\varphi^k(x)$ converges to $0$ if and only if $x \in (u, X)M^{\textnormal{m}}$, thus we have shown the injectivity. To show surjectivity, take an element $x_0 \in (M/(u, X))_{\text{unit}}$, and suppose we have $x_0 = \varphi^n(x_n)$ with $x_n \in (M/(u, X))_{\text{unit}}$, then lift all the $x_n$ to some $\hat{x}_n \in M$, we claim that $\varphi^n(\hat{x}_n)$ converges to an element in $M^{\textnormal{m}}$ and maps to $x_0$. To prove the claim, take any $n>m$, then $\varphi^{n-m}(\hat{x}_n) - \hat{x}_m \equiv \varphi^{n-m}(x_n)-x_m \equiv 0 (\bmod (u, X)M)$, thus $\varphi^n(\hat{x}_n) - \varphi^m(\hat{x}_m) \in \varphi^{m}((u, X)M)$. So $\{\varphi^n(\hat{x}_n)\}_{n \geq 0}$ clearly converges to an element $y \in M$. Similarly $\{\varphi^{n-1}(\hat{x}_n)\}_{n \geq 1}$ converges to some $y_1 \in M$, and $\varphi(y_1) =y$. Similarly, we can find $y_n \in M$ such that $y =\varphi^n(y_n)$, thus $y \in \cap_{n=1}^{\infty} (\varphi^{\ast})^n M =M^{\textnormal{m}}$, and clearly $y$ maps to $x_0$.

Now we have proved that $M^{\textnormal{m}}/(u, X)M^{\textnormal{m}}$ is a finite free $W(k)$-module, i.e., \\
$(\Sigma/(u, X))^{d_1} \simeq M^{\textnormal{m}}/(u, X)M^{\textnormal{m}}$. Take a basis $(e_1, \ldots, e_{d_1})$ of $M^{\textnormal{m}}/(u, X)M^{\textnormal{m}}$, and lift it to $(\hat{e}_1, \ldots, \hat{e}_{d_1})$ in $M^{\textnormal{m}}$, since $(u, X)$ is in the maximal ideal of $\Sigma$, by Nakayama Lemma, the span of $(\hat{e}_1, \ldots, \hat{e}_{d_1})$ generates $M^{\textnormal{m}}$. Also we take a basis $(f_1, \ldots, f_{d_2})$ of $(M/(u ,X))_{\text{nil}}$ and lift it to $(\hat{f}_1, \ldots, \hat{f}_{d_2})$ in $M$. Now since the reduction of $(\hat{e}_1, \ldots, \hat{e}_{d_1}, \hat{f}_1, \ldots, \hat{f}_{d_2})$ modulo $(u, X)$ is a basis of $M/(u, X)M$, $(\hat{e}_1, \ldots, \hat{e}_{d_1}, \hat{f}_1, \ldots, \hat{f}_{d_2})$ is clearly a basis of $M$. Thus $(\hat{e}_1, \ldots, \hat{e}_{d_1})$ is also a basis of $M^{\textnormal{m}}$, and this proves that $M^{\textnormal{m}}$ is finite free.

\end{proof}

\begin{defn}
For $M \in \Mod_\Sigma^{\varphi}$,
\begin{enumerate}
\item  $M$ is called \'{e}tale if $\Fil^r M =M$, it is called multiplicative if $\Fil^r M =\Fil^r\Sigma M$.
\item $M$ is called nilpotent if it has no nonzero multiplicative submodules, it is called unipotent if it has no nonzero \'{e}tale quotients.
\end{enumerate}
\end{defn}

\begin{lemma} \label{SigmaFilr}
For $M \in \Mod_\Sigma^{\varphi}$ (resp. $\Mod_{\Sigma_1}^{\varphi}$), there exists $\bolda = (\alpha_1, \ldots, \alpha_d)^T$ with $\alpha_i \in \Fil^r M$, such that
\begin{enumerate}
\item $\Fil^r M = \oplus_{i=1}^{d} \Sigma \alpha_i + \Fil^p \Sigma M$,
\item $\oplus_{i=1}^{d} \Sigma \alpha_i \supseteq E(u)^r M$, and $\varphi_r(\bolda)$ is a basis of $M$.
\end{enumerate}
\end{lemma}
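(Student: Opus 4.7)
The plan is to mimic the proof of Lemma \ref{Filr} (the $S$-module analog), replacing $S$ by $\Sigma$ throughout, and to treat the $r=p-1$ subtlety by exactly the same trick used there. The argument splits into three steps: first handle the mod-$p$ case $M/pM \in \Mod_{\Sigma_1}^{\varphi}$, then lift by Nakayama, and finally verify the basis property of $\varphi_r(\bolda)$.

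First I would establish the $\Mod_{\Sigma_1}^{\varphi}$ version by adapting Proposition 2.2.1.3 of \cite{Bre99a} to the ring $\Sigma_1$. The key ingredient in Breuil's argument is that $\Fil^r\Sigma_1 = (E(u)^r, Y)$ for $0 \leq r \leq p-1$ together with the fact that $\varphi_r(\Fil^r M)$ generates $M$. Given a basis $\bolde$ of $M/pM$, the elements $E(u)^r\bolde$ lie in $\Fil^r M/pM$; by reverse-engineering from a basis of $M/pM$ of the form $\varphi_r(\bolda_0)$ (which exists because $\varphi_r(\Fil^r M/pM)$ generates $M/pM$ and $M/pM$ is free over $\Sigma_1$), one obtains $\bolda_0=(\bar\alpha_1,\ldots,\bar\alpha_d)^T$ with $\oplus \Sigma_1\bar\alpha_i \supseteq E(u)^r M/pM$ and $\Fil^r M/pM = \oplus \Sigma_1\bar\alpha_i + \Fil^p\Sigma_1 \cdot M/pM$.

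Next I would lift $\bolda_0$ to $\bolda = (\alpha_1,\ldots,\alpha_d)^T$ with $\alpha_i \in \Fil^r M$. To prove (2) integrally, write $E(u)^r \bolde = \bar A_0 \bolda_0 \pmod p$ for some $\bar A_0 \in \Mat_d(\Sigma_1)$; the matrix of $\varphi_r$ on $\varphi_r(\bolda_0)$ being a basis implies $\bar A_0$ is invertible mod the maximal ideal, hence it lifts to an invertible $A_0 \in \GL_d(\Sigma)$, giving $\oplus\Sigma\alpha_i \supseteq E(u)^r M$. For (1), a standard Nakayama/$p$-adic successive approximation argument upgrades the mod-$p$ equality $\Fil^r M/pM = \oplus\Sigma_1\bar\alpha_i + \Fil^p\Sigma_1 \cdot M/pM$ to the integral statement, using the $p$-adic completeness of $\Sigma$ and $M$ and the fact that $\Fil^p\Sigma \cdot M \subseteq \Fil^r M$.

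Finally, I would show $\varphi_r(\bolda)$ is a basis of $M$, which is the spot where the $r=p-1$ case requires the trick from Lemma \ref{Filr}. Since $\varphi_r(\Fil^r M)$ generates $M$ and $\Fil^r M = \oplus\Sigma\alpha_i + \Fil^p\Sigma\cdot M$, the elements $\varphi_r(\bolda)$ together with $\varphi_r(\Fil^p\Sigma \cdot M)$ generate $M$; but for $s \in \Fil^p\Sigma$ and $m \in M$ we have $\varphi_r(sm) = c^{-r}\varphi_r(s)\varphi_r(E(u)^r m)$, and $E(u)^r m \in \oplus\Sigma\alpha_i$ by (2), so $\varphi_r(E(u)^r m)$ is a $\Sigma$-linear combination of $\varphi_r(\bolda)$. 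Therefore $\varphi_r(\bolda)$ alone generates $M$, and comparing ranks (both equal to $d$) shows it is a basis. The main obstacle I expect is the careful bookkeeping needed to ensure that the lift $A_0$ of $\bar A_0$ remains invertible in $\Sigma$ and that the Fil-decomposition in (1) really does pass from mod $p$ to the $p$-adic limit; the rest is a mechanical transcription of the $S$-module proof into the $\Sigma$-setting, which is licensed by the fact that $\Sigma \hookrightarrow S$ is compatible with $\varphi$, $N$ and the filtrations.
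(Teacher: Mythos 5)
Your high-level strategy is the same one the paper takes --- the paper's proof of this lemma is literally the sentence ``This is the $\Sigma$-module version of Lemma 2.2.3, and the proof is similar,'' and Lemma 2.2.3 in turn references Proposition 4.1.2 of \cite{Liu08} (resp.\ Proposition 2.2.1.3 of \cite{Bre99a} in the mod-$p$ case) plus the extra argument for $r=p-1$ that you correctly reproduce in your final step.

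However, your argument for part (2) contains a genuine error. You write $E(u)^r\bolde = \bar A_0\bolda_0 \pmod p$ and claim that $\bar A_0$ is invertible modulo the maximal ideal. This cannot be right. Since $\bolda_0 = \bar B_0\bolde$ for some $\bar B_0 \in \Mat_d(\Sigma_1)$, your relation forces $\bar A_0\bar B_0 = E(u)^r \cdot Id$, and $E(u)^r \equiv u^{er} \equiv 0$ modulo the maximal ideal $(u,Y)$ of $\Sigma_1$; so $\det \bar A_0$ lies in the maximal ideal whenever $r \geq 1$ and $\bar A_0$ is \emph{never} invertible in that range. One can also see the claimed invertibility is wrong from the conclusion it would entail: if $\bar A_0$ were invertible one would get $\bolda_0 = \bar A_0^{-1}E(u)^r\bolde$, hence $\Fil^r M = E(u)^r M + \Fil^p\Sigma\cdot M$, which is precisely the \emph{multiplicative} case and certainly not true for a general $M$. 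The containment $\oplus\Sigma\alpha_i \supseteq E(u)^r M$ does hold, but it must be extracted differently: it comes directly from the finite-height structure (the existence of $A' \in \Mat_d(\Sigma)$ with $AA' = E(u)^r Id$ once one writes $\bolda = A\bolde$), or equivalently one should build $\bolda$ out of a chosen basis $\bolde$ of $M$ using $\varphi_r$-surjectivity from the start (as in \cite{Liu08}), rather than lifting an arbitrary mod-$p$ system and then trying to check (2) a posteriori by an invertibility argument that cannot succeed.

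Your first and third steps are fine; in particular the $r=p-1$ workaround for showing $\varphi_r(\bolda)$ generates $M$ --- rewriting $\varphi_r(sm)$ for $s\in\Fil^p\Sigma$ in terms of $\varphi_r(E(u)^r m)$ and using (2) --- is exactly the argument the paper uses in Lemma 2.2.3. So the fix needed is localized to how you obtain (2).
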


\begin{proof}
This is the $\Sigma$-module version of Lemma \ref{Filr}, and the proof is similar.
\end{proof}

\begin{lemma} \label{Sigmamult}
$M \in \Mod_\Sigma^{\varphi}$ is \'{e}tale (resp. multiplicative) if and only if $1\otimes \varphi: \Sigma \otimes_{\varphi, \Sigma} M \to M$, or
equivalently, $1\otimes \varphi_r: \Sigma \otimes_{\varphi, \Sigma} E(u)^rM \to M$, is an isomorphism onto $p^r M$ (resp. $M$).
\end{lemma}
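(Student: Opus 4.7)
The plan is to mimic the proof of Lemma \ref{Smult} verbatim, replacing $S$ throughout by $\Sigma$ and using Lemma \ref{SigmaFilr} in place of Lemma \ref{Filr}. Necessity in each case is routine from the definitions: if $\Fil^r M = M$, then $\varphi_r$ is defined on all of $M$, and the relation $\varphi(m) = p^r \varphi_r(m)$ combined with the fact that $\varphi_r(\Fil^r M)$ generates $M$ gives $\varphi^{\ast}M = p^r M$; the multiplicative case of necessity is analogous.

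For sufficiency in the \'{e}tale case, I would choose $\bolda = (\alpha_1,\ldots,\alpha_d)^T$ by Lemma \ref{SigmaFilr}, set $\bolde := \varphi_r(\bolda)$ (a basis of $M$), write $\bolda = A\bolde$ with $A \in \Mat(\Sigma)$, and pick $A' \in \Mat(\Sigma)$ satisfying $AA' = E(u)^r Id$ (which exists because $E(u)^r M \subseteq \oplus \Sigma \bolda$). The computation
\[ \varphi(A')\bolde \;=\; \varphi_r(A'\bolda) \;=\; \varphi_r(E(u)^r\bolde) \;=\; c^r\varphi(\bolde), \]
together with the hypothesis $\varphi^{\ast}M = p^r M$ (which forces $\varphi(\bolde) = p^r U \bolde$ for some $U \in \GL_d(\Sigma)$), yields $\varphi(A') = c^r p^r U$; combining with $\varphi(A)\varphi(A') = p^r c^r Id$ shows $\varphi(A) = U^{-1} \in \GL_d(\Sigma)$.

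The one step that genuinely needs to be verified for $\Sigma$, rather than just copied from the $S$-case, is the implication ``$\varphi(A)$ invertible $\Rightarrow$ $A$ invertible.'' I would deduce this from the local-ring structure of $\Sigma$: the maximal ideal is $\mathfrak{m} = (p, u, X)$, and the explicit identities $\varphi(p) = p$, $\varphi(u) = u^p$, and $\varphi(X) = p^{p-1}X^p$ (the last from $\varphi(u^{ep}) = \varphi(pX) = p\varphi(X)$) give $\varphi(\mathfrak{m}) \subseteq \mathfrak{m}$; moreover, the induced endomorphism of the residue field $\Sigma/\mathfrak{m} = k$ is the absolute Frobenius, hence injective. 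Therefore $\varphi^{-1}(\mathfrak{m}) = \mathfrak{m}$, so an element $s \in \Sigma$ is a unit iff $\varphi(s)$ is. Applied to $\det A$, this gives $A \in \GL_d(\Sigma)$, so $\oplus \Sigma \bolda = \oplus \Sigma A \bolde = \oplus \Sigma \bolde = M$; combined with $\oplus \Sigma \bolda \subseteq \Fil^r M \subseteq M$ this forces $\Fil^r M = M$.

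The multiplicative case of sufficiency is handled either by a parallel direct argument (now $\varphi(A)$ has entries in $p^r\Sigma$ times an invertible matrix, from which one extracts $\bolda \in \Fil^r \Sigma \cdot M$ and concludes $\Fil^r M = \Fil^r \Sigma \cdot M$) or, more cleanly, by invoking Cartier duality on $\Mod_\Sigma^{\varphi}$ to reduce to the \'{e}tale statement applied to $M^{\vee}$. I anticipate no serious obstacle beyond the local-ring verification just sketched; the remainder is bookkeeping in exact parallel with the proof of Lemma \ref{Smult}.
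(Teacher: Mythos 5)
Your proof is correct and follows the same plan as the paper, which disposes of this lemma by simply declaring it the $\Sigma$-analogue of Lemma~\ref{Smult} and leaving the details to the reader. The one point genuinely specific to $\Sigma$ — that $s \in \Sigma$ is a unit if and only if $\varphi(s)$ is, which you verify cleanly from the local-ring structure $\mathfrak{m}=(p,u,X)$ and the identities $\varphi(u)=u^p$, $\varphi(X)=p^{p-1}X^p$ — is precisely the fact the paper tacitly invokes for $S$ and does not re-examine for $\Sigma$.
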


\begin{proof}
This is the $\Sigma$-module version of Lemma \ref{Smult}.
\end{proof}

\begin{lemma} \label{pMm}
 Let $M \in \Mod_\Sigma^{\varphi}$ be a multiplicative module, $\varphi: M \to M$ the morphism derived from $\varphi_r$. If $\varphi(z) \in pM$, then $z \in (p, E(u), \frac{E(u)^p}{p})M$.
\end{lemma}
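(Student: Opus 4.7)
The plan is to reduce the statement to a computation of the kernel of $\varphi$ on $\Sigma/p$, exploiting that multiplicativity makes the linearization $1\otimes\varphi$ an isomorphism.

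First I would compute $\varphi^{-1}(p\Sigma)$. Using $E(u)^p=pY$, apply $\varphi$ to get $\varphi(E(u))^p=p\varphi(Y)$, i.e.\ $\varphi(Y)=\varphi(E(u))^p/p=p^{p-1}c^p\in p\Sigma$. Since $E(u)$ is Eisenstein, $E(u)\equiv u^e\pmod{p}$, so we identify $\Sigma/p\simeq k[Y][u]/(u^{ep})$, on which $\varphi$ sends $u\mapsto u^p$ and $Y\mapsto 0$. Thus for $f=\sum_{i,j}a_{ij}u^iY^j$ in $\Sigma/p$ (with $0\le i<ep$), $\varphi(f)=\sum_{i<e}a_{i0}^p\,u^{pi}$, and this vanishes iff $a_{i0}=0$ for $i<e$. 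This identifies $\ker(\varphi\bmod p)=(u^e,Y)=(E(u),Y)$ in $\Sigma/p$; equivalently, for $a\in\Sigma$, $\varphi(a)\in p\Sigma$ iff $a\in(p,E(u),Y)\Sigma=(p,E(u),E(u)^p/p)\Sigma$.

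Next I would transfer this pointwise criterion to $M$ using multiplicativity. Pick a basis $\boldsymbol{e}=(e_1,\ldots,e_d)^T$ of the finite free $\Sigma$-module $M$, and let $A\in\Mat_d(\Sigma)$ be defined by $\varphi(\boldsymbol{e})=A\boldsymbol{e}$, so $A$ is the matrix of the $\Sigma$-linear map $1\otimes\varphi:\Sigma\otimes_{\varphi,\Sigma}M\to M$ in the bases $(1\otimes e_i)$ and $(e_i)$. By Lemma \ref{Sigmamult}, multiplicativity of $M$ says that $1\otimes\varphi$ is an isomorphism onto $M$, hence $A\in\GL_d(\Sigma)$. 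Writing $z=\sum_i a_ie_i$, we get
\[
\varphi(z)=\sum_j\Bigl(\sum_i A_{ji}\,\varphi(a_i)\Bigr)e_j,
\]
so $\varphi(z)\in pM$ is equivalent to $A\cdot(\varphi(a_1),\ldots,\varphi(a_d))^T\equiv 0\pmod p$. Since $A$ is invertible modulo $p$, this forces $\varphi(a_i)\in p\Sigma$ for every $i$.

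Combining the two steps, $a_i\in(p,E(u),E(u)^p/p)\Sigma$ for each $i$ by the kernel computation, and therefore $z\in(p,E(u),E(u)^p/p)M$, as claimed. The only place that requires genuine care is the kernel computation on $\Sigma/p$; the rest is bookkeeping enabled by the fact that the matrix $A$ of Frobenius is invertible in the multiplicative case.
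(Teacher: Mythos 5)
Your proof is correct and follows essentially the same route as the paper's: use Lemma \ref{Sigmamult} to get that the matrix $A$ of $\varphi$ on a basis is invertible, deduce $\varphi(a_i)\in p\Sigma$ for the coordinates of $z$, and conclude from the pointwise statement on $\Sigma$. The one thing you add is an explicit verification of the claim the paper dismisses with ``we can easily check,'' namely that $\varphi(a)\in p\Sigma$ forces $a\in(p,E(u),E(u)^p/p)\Sigma$, via the identification $\Sigma/p\simeq k[\![Y]\!][u]/(u^{ep})$ and the observation that $\varphi$ kills $Y$ mod $p$; this is a useful detail to have written out. (Minor: your displayed formula has an $A_{ji}$ where it should be $A_{ij}$, i.e.\ the coefficient vector gets hit by $A^T$ rather than $A$, but since $A^T$ is invertible iff $A$ is, the argument is unaffected.)
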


\begin{proof}
 Take any basis $\bolde =(e_1, \ldots, e_d)^T$ of $M$, then since $M$ is multiplicative, $\varphi(\bolde) = A\bolde$ with $A$ an invertible matrix (Lemma \ref{Sigmamult}). Suppose $z =\sum_{i=1}^{d} a_i e_i$ and $\varphi(z) \in pM$, then $\varphi(z) =(\varphi(a_1), \ldots, \varphi(a_d)) A \bolde = p(b_1, \ldots, b_d)\bolde$ for some $b_i \in \Sigma, 1 \leq i \leq d$.
 Thus $(\varphi(a_1), \ldots, \varphi(a_d)) = p(b_1, \ldots, b_d)A^{-1}$, so $\varphi(a_i) \in p\Sigma$ for all $i$. We can easily check that $\varphi(s) \in p\Sigma$ implies $s \in (p, E(u), \frac{E(u)^p}{p})\Sigma$. Thus $a_i \in (p, E(u), \frac{E(u)^p}{p})\Sigma$ for all $i$, and $z \in (p, E(u), \frac{E(u)^p}{p})M$.
\end{proof}

\begin{thm} \label{SigmaM}
For $M \in \Mod_{\Sigma}^{\varphi}$,
\begin{enumerate}
\item We have two short exact sequences in $\Mod_{\Sigma}^{\varphi}$:
$$ 0\to M^{\textnormal{m}}\to M\to M^{\textnormal{nil}} \to 0 $$
and
$$ 0 \to M^{\textnormal{uni}} \to M \to M^{\textnormal{et}} \to 0,$$
where $M^{\textnormal{m}}$ (resp. $M^{\textnormal{nil}}, M^{\textnormal{uni}}, M^{\textnormal{et}}$) is a multiplicative (resp. nilpotent, unipotent, \'{e}tale) module. In fact, the second sequence is by taking Cartier dual of the first sequence, i.e., $M^{\textnormal{uni}}=(M^{\vee, \textnormal{nil}})^{\vee}$ and $M^{\textnormal{et}}=(M^{\vee, \textnormal{m}})^{\vee}$. Also, $M^{\textnormal{m}}$ is the maximal multiplicative submodule of $M$.

\item $M$ is \'{e}tale (resp. unipotent) if and only if $M^{\vee}$ is multiplicative (resp. nilpotent), and vice versa.

\end{enumerate}
\end{thm}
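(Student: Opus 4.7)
The strategy mirrors Proposition \ref{huaMexact} in the $\huaS$-setting, using Theorem \ref{Mm} to construct the maximal multiplicative candidate and Lemma \ref{pMm} as the new tool to match filtrations.

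\textbf{Step one.} Take $M^{\textnormal{m}} := \cap_n (\varphi^\ast)^n M$, finite free over $\Sigma$ by Theorem \ref{Mm}, and equip it with the filtration $\Fil^r M^{\textnormal{m}} := \Fil^r \Sigma \cdot M^{\textnormal{m}}$. I would show the matrix of $\varphi$ on $M^{\textnormal{m}}$ is invertible in $\Sigma$ by reducing modulo $(u,X)$ to the action on $(M/(u,X)M)_{\textnormal{unit}}$, which is bijective by Lemma \ref{decomposition}, and invoking Nakayama over the local ring $\Sigma$; Lemma \ref{Sigmamult} then yields multiplicativity. The remaining categorical conditions---that $M^{\textnormal{m}}/\Fil^r M^{\textnormal{m}}$ is $p$-torsion free and $\varphi_r(\Fil^r M^{\textnormal{m}})$ generates $M^{\textnormal{m}}$---reduce respectively to $\Sigma/\Fil^r\Sigma \simeq W(k)[u]/E(u)^r$ and to $\varphi_r(E(u)^r) = c^r$ being a unit.

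\textbf{Step two.} Set $M^{\textnormal{nil}} := M/M^{\textnormal{m}}$, which is finite free because the basis construction in the proof of Theorem \ref{Mm} realizes $M^{\textnormal{m}}$ as a direct summand of $M$. To promote the quotient to an exact sequence in $\Mod_\Sigma^{\varphi}$, the crucial filtration identity $\Fil^r M \cap M^{\textnormal{m}} = \Fil^r\Sigma \cdot M^{\textnormal{m}}$ must hold, and this is where Lemma \ref{pMm} enters: given $x$ in the left-hand side, $\varphi(x) \in p^r M \cap M^{\textnormal{m}} = p^r M^{\textnormal{m}}$, and an iterated application of Lemma \ref{pMm} to $M^{\textnormal{m}}$ combined with the $p$-saturation of $\Fil^r M$ inside $M$ should successively strip off $p$-parts of $x$ and place it inside $\Fil^r\Sigma \cdot M^{\textnormal{m}}$. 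I expect this identity to be the main technical obstacle. Nilpotency of $M^{\textnormal{nil}}$ and maximality of $M^{\textnormal{m}}$ then follow formally: any multiplicative submodule of $M$ is $\varphi^\ast$-stable and hence contained in $\cap_n (\varphi^\ast)^n M = M^{\textnormal{m}}$, and a symmetric argument rules out nonzero multiplicative submodules of $M^{\textnormal{nil}}$.

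\textbf{Step three.} The second short exact sequence is obtained by applying Steps one and two to $M^\vee$ and dualizing, using that Cartier duality on $\Mod_\Sigma^{\varphi}$ is exact. For part (2), a dual-basis computation directly from $\Fil^r M^\vee = \{f : f(\Fil^r M) \subseteq \Fil^r\Sigma\}$ shows that $\Fil^r M = M$ holds exactly when $\Fil^r M^\vee = \Fil^r\Sigma \cdot M^\vee$, establishing the \'{e}tale/multiplicative equivalence; the unipotent/nilpotent equivalence then follows by applying this equivalence to the \'{e}tale quotients and multiplicative submodules appearing in the two short exact sequences produced above.
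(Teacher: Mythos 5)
Your overall skeleton matches the paper's: take $M^{\textnormal{m}}=\cap_n(\varphi^\ast)^n M$ (finite free by Theorem \ref{Mm}), show it is multiplicative via Lemma \ref{Sigmamult} and the $\bmod\,(u,X)$ reduction, realize it as a direct summand so $M^{\textnormal{nil}}$ is free, and dualize for the second sequence. Step three and part (2) are essentially the argument in the paper. But there is a real gap in Step two, and a misdiagnosis of where Lemma \ref{pMm} enters.

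The gap is that you never verify the condition that $M^{\textnormal{nil}}/\Fil^r M^{\textnormal{nil}}$ is $p$-torsion free, which is required for $M^{\textnormal{nil}}$ to be an object of $\Mod_{\Sigma}^{\varphi}$ at all. Unwinding $\Fil^r M^{\textnormal{nil}}=(\Fil^r M+M^{\textnormal{m}})/M^{\textnormal{m}}$, this is the statement: if $px=y+z$ with $x\in M$, $y\in\Fil^r M$, $z\in M^{\textnormal{m}}$, then $x\in\Fil^r M+M^{\textnormal{m}}$. This does not ``follow formally'' from nilpotency or maximality; it is the genuinely hard part of part (1), and it is precisely where the paper deploys Lemma \ref{pMm} in an iteration: apply $\varphi$ to see $\varphi(z)\in pM^{\textnormal{m}}$, use Lemma \ref{pMm} to write $z=pz_1+E(u)z_2+Yz_3$, absorb $pz_1$ and $Yz_3$, then multiply the remaining relation $px=y+E(u)z$ by $E(u)^{r-1}$ and use $p$-torsion-freeness of $M/\Fil^r M$ to raise the power of $E(u)$ on $z$, repeating until $px\in\Fil^r M$.

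By contrast, the identity you flag as the obstacle, $\Fil^r M\cap M^{\textnormal{m}}=\Fil^r\Sigma\cdot M^{\textnormal{m}}$, does not need Lemma \ref{pMm} at all. It is cleaner to put the filtration $\Fil^r M^{\textnormal{m}}:=M^{\textnormal{m}}\cap\Fil^r M$ directly on $M^{\textnormal{m}}$ (so left-exactness of the filtration sequence is automatic), check the $\Mod_{\Sigma}^{\varphi}$ axioms with this filtration (the $p$-torsion-freeness of $M^{\textnormal{m}}/\Fil^r M^{\textnormal{m}}$ is trivial since it embeds in $M/\Fil^r M$, and $\varphi(\Fil^r\Sigma M^{\textnormal{m}})$ already generates $p^r M^{\textnormal{m}}$ because $\varphi^\ast M^{\textnormal{m}}=M^{\textnormal{m}}$), and then invoke Lemma \ref{Sigmamult} to conclude $M^{\textnormal{m}}$ is multiplicative, which gives the identity for free. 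Your iteration-with-Lemma-\ref{pMm} sketch for this identity would, if pushed through, amount to redoing the $M^{\textnormal{nil}}$ argument in a harder spot, while still leaving the $M^{\textnormal{nil}}$ torsion-freeness unaddressed.
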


\begin{proof}
Define $M^{\textnormal{m}} = \cap_{n=1}^{\infty} (\varphi^{\ast})^{n}M$ as in Theorem \ref{Mm}. Define $M^{\textnormal{nil}}
=M/M^{\textnormal{m}}$. We claim that $M^{\textnormal{nil}}$ is a finite free $\Sigma$-module. To prove the claim, note that in the proof of Theorem \ref{Mm}, we have shown that $M^m/(u, X) \simeq (M/(u, X))_{\textnormal{unit}}$, in particular, $M^m/(u, X)$ injects into $M/(u, X)$. Thus, the short exact sequence $0 \to M^m \to M \to M^{\textnormal{nil}} \to 0$ is still short exact after reduction modulo $(u, X)$, so $M^{\textnormal{nil}}/(u, X) \simeq
(M/(u, X))_{\textnormal{nil}}$ is a finite free $W(k)$-module (Lemma \ref{decomposition}), suppose it is of rank $d_2$. By Nakayama Lemma, we can have a surjective map $(\Sigma)^{d_2} \twoheadrightarrow M^{\text{nil}}$, and $M^{\text{nil}}\otimes_{\Sigma} \Frac \Sigma \simeq (M\otimes\Frac\Sigma) / (M^{\textnormal{m}} \otimes \Frac\Sigma)\simeq (\Frac\Sigma)^{d_2} $.
Thus we have the following commutative diagram,

$$\xymatrix{
\Sigma^{d_2}   \ar@{->>}[d]   \ar@{^{(}->}[r]   &(\Frac \Sigma)^{d_2}  \ar[d]^{=} \\
M^{\text{nil}}  \ar[r]                             &M^{\text{nil}}\otimes \Frac \Sigma
}$$

 So our surjective map $(\Sigma)^{d_2} \twoheadrightarrow M^{\text{nil}}$ has to be injective, thus bijective, i.e., $M^{\textnormal{nil}}$ is finite free.

Now we prove that $M^{\textnormal{m}} \in \Mod_{\Sigma}^{\varphi}$. Since $M/M^{\textnormal{m}}$ is finite free, so in particular $p$-torsion free, $M^{\textnormal{m}} \cap p M = p M^{\textnormal{m}}$. Define $\Fil^r M^{\textnormal{m}} = M^{\textnormal{m}} \cap \Fil^r M$, thus $\varphi(\Fil^r M^{\textnormal{m}}) \subseteq M^{\textnormal{m}} \cap p^r M = p^r M^{\textnormal{m}}$. Since $\Fil^r M^{\textnormal{m}} \supseteq \Fil^r \Sigma M^{\textnormal{m}}$ and $\varphi( \Fil^r \Sigma M^{\textnormal{m}})$ generates $p^r M^{\textnormal{m}}$, thus $\varphi(\Fil^r M^{\textnormal{m}})$ generates $p^rM^{\textnormal{m}}$. Now we claim that $M^{\textnormal{m}}/\Fil^r M^{\textnormal{m}}$ is $p$-torsion free. To prove the claim, suppose $x \in M^{\textnormal{m}}$ and $px \in \Fil^r M^{\textnormal{m}}$, thus $px \in \Fil^r M$. Since $M/\Fil^r M$ is $p$-torsion free, thus $x \in \Fil^r M$, so $x \in \Fil^r M^{\textnormal{m}}$. All these facts show that $(M^{\textnormal{m}}, \Fil^r M^{\textnormal{m}}, \varphi_r)$ is a well-defined object in $\Mod_{\Sigma}^{\varphi}$. And by Lemma \ref{Sigmamult}, it is a multiplicative object.

 Next, we prove that $ M^{\textnormal{nil}} \in \Mod_{\Sigma}^{\varphi}$. Define $\Fil^r M^{\textnormal{nil}} := \Fil^r M/\Fil^r M^{\textnormal{m}}= (\Fil^r M +M^{\textnormal{m}})/M^{\textnormal{m}}$. Clearly $\varphi_r(\Fil^r M^{\textnormal{nil}})$ generates $M^{\textnormal{nil}}$. We want to show that $M^{\textnormal{nil}}/\Fil^r M^{\textnormal{nil}} = (M/M^{\textnormal{m}})/((\Fil^r M +M^{\textnormal{m}})/M^{\textnormal{m}})$ is $p$-torsion free. It is equivalent to the following claim: suppose there exists $px =y +z$ for $x \in M, y \in \Fil^r M, z \in M^{\textnormal{m}}$, then $x \in \Fil^r M +M^{\textnormal{m}}$. To prove the claim, apply $\varphi$ to $px=y+z$, then $\varphi(z) = \varphi(px)- \varphi(y)$, and since $\varphi(y) \in p^rM$, thus $\varphi(z) \in M^{\textnormal{m}}\cap pM=pM^{\textnormal{m}}$. By Lemma \ref{pMm}, we have $z =pz_1 + E(u)z_2 +\frac{E(u)^p}{p} z_3, z_i \in M^{\textnormal{m}}$. Thus $p(x-z_1) = (y+\frac{E(u)^p}{p} z_3) + E(u)z_2$. To prove our claim, with some adjustment, we can just assume that we have a relation $px =y +E(u)z$ with $x \in M, y \in \Fil^r M, z \in M^{\textnormal{m}}$. Now $E(u)^{r-1}px=E(u)^{r-1}y+E(u)^{r}z\in \Fil^r M$, since $M/\Fil^r M$ is $p$-torsion free, thus $E(u)^{r-1}x \in \Fil^r M$. $\varphi(E(u)^{r-1}x) \in \varphi(\Fil^r M) \subseteq p^rM$, thus $\varphi(x) \in pM$. We now have $\varphi(E(u)z) = \varphi(px) - \varphi(y) \in p^2M$, thus $\varphi(z) \in pM^{\textnormal{m}}$. By using Lemma \ref{pMm} again, and then iterate all these steps, we will in the end have $px =y +E(u)^rz$ with $x \in M, y \in \Fil^r M, z \in M^{\textnormal{m}}$. Thus $px \in \Fil^r M$, so $x \in \Fil^r M$, and we are done. Now we have shown that $(M^{\textnormal{nil}}, \Fil^r M^{\textnormal{nil}}, \varphi_r)$ is a well-defined object in $\Mod_{\Sigma}^{\varphi}$. The nilpotency of $M^{\textnormal{nil}}$ is clear.

The second short exact sequence is by taking Cartier dual of the first sequence. Note that it is easy to verify that $M$ is \'{e}tale if and only if $M^{\vee}$ is multiplicative, by using Lemma \ref{Sigmamult}. To show that $M^{\textnormal{uni}}$ is unipotent, suppose otherwise, then there is a short exact sequence $0 \to M_1 \to M^{\textnormal{uni}} \to M_2 \to 0$ where $M_2$ is nonzero and \'{e}tale. By taking the Cartier dual of this sequence (recall that the functor of taking Cartier duals is a duality), it shows that the original nilpotent module $M^{\vee, \textnormal{nil}}$ has a nonzero multiplicative submodule, which is a contradiction.

With these two exact sequences, it is easy to verify that $M$ is nilpotent if and only if $M^{\vee}$ is unipotent, and vice versa.
\end{proof}

\begin{corollary} \label{remark}
Let $M \in \Mod_{\Sigma}^{\varphi}$ be as in Lemma \ref{SigmaFilr}, suppose $\bolda = A\bolde$ where $\bolde=\frac{\varphi_r(\bolda)}{c^r}$, then $M$ is unipotent if and only if $\Pi_{i=0}^{\infty} \varphi^i(A)=0$.
\end{corollary}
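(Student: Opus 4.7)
The strategy is to dualize: by Theorem~\ref{SigmaM}(2), $M$ is unipotent if and only if $M^{\vee}$ is nilpotent, so I only have to translate nilpotency of $M^{\vee}$ into a convergence condition on an explicit matrix product built from $A$.

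First I will compute the matrix of $\varphi$ on $M^{\vee}$ in the dual basis $\bolde^{\vee}$. Let $A'$ be the matrix satisfying $A'A = E(u)^rId$ coming from $\oplus\Sigma\alpha_i \supseteq E(u)^rM$ in Lemma~\ref{SigmaFilr}(2). Applying the defining relation $\varphi_r^{\vee}(f)(\varphi_r(x)) = \varphi_r(f(x))$ to $f = E(u)^re_i^{\vee}$ and $x = \alpha_j$, and combining with $\varphi_r(\bolda) = c^r\bolde$ and $\varphi_r(E(u)^rs) = c^r\varphi(s)$ for $s\in\Sigma$, one gets $\varphi_r^{\vee}(E(u)^re_i^{\vee}) = \sum_j\varphi(A_{ji})e_j^{\vee}$, hence $\varphi(\bolde^{\vee}) = c^{-r}\varphi(A)^T\bolde^{\vee}$. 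Iterating and commuting the scalars past the matrices, the matrix of $\varphi^n$ on $M^{\vee}$ in the basis $\bolde^{\vee}$ equals $B_n = u_n\cdot(\varphi(A)\varphi^2(A)\cdots\varphi^n(A))^T$, where $u_n = \varphi^{n-1}(c^{-r})\cdots\varphi(c^{-r})c^{-r}$ is a unit in $\Sigma$.

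Next I will show that $M^{\vee,\textnormal{m}} := \cap_{n\ge 1}(\varphi^{*})^nM^{\vee} = 0$ if and only if $B_n \to 0$ in the $(p,u,X)$-adic topology of $\Mat_d(\Sigma)$. Since $(\varphi^{*})^nM^{\vee}$ is the $\Sigma$-span of the entries of $B_n\bolde^{\vee}$, the implication $B_n \to 0 \Rightarrow M^{\vee,\textnormal{m}} = 0$ is Krull's intersection theorem. For the converse I will invoke the Fitting decomposition of $M^{\vee}/(u,X)M^{\vee}$ from Lemma~\ref{decomposition}, combined with the identification $M^{\vee,\textnormal{m}}/(u,X)M^{\vee,\textnormal{m}} \simeq (M^{\vee}/(u,X))_{\textnormal{unit}}$ proved inside Theorem~\ref{Mm}; vanishing of $M^{\vee,\textnormal{m}}$ then forces $B_n \in (p,u,X)\Mat_d(\Sigma)$ for some $n$, and the telescoping identity $B_{2n} = \varphi^n(B_n)B_n$ upgrades this to $B_{kn} \in (p,u,X)^k\Mat_d(\Sigma)$ for every $k$, so $B_n \to 0$.

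Finally I will convert $B_n \to 0$ into the stated criterion. Absorbing $u_n$ and taking transposes, $B_n \to 0$ is equivalent to $\prod_{i=1}^n\varphi^i(A) \to 0$. One direction of the remaining equivalence with $\prod_{i=0}^{\infty}\varphi^i(A) = 0$ is clear because $A$ has entries in $\Sigma$: $\prod_{i=1}^n\varphi^i(A) \to 0$ gives $\prod_{i=0}^n\varphi^i(A) = A\cdot\prod_{i=1}^n\varphi^i(A) \to 0$. Conversely, multiplying $\prod_{i=0}^n\varphi^i(A)$ on the left by $A'$ and using $A'A = E(u)^rId$ yields $E(u)^r\prod_{i=1}^n\varphi^i(A) \to 0$; since $\Sigma$ is a Noetherian local ring in which $E(u)^r$ is a non-zero-divisor, the Artin--Rees lemma recovers $\prod_{i=1}^n\varphi^i(A) \to 0$. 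The main obstacle is step two, namely the $\Sigma$-analog of Proposition~\ref{huaMexact}(3); but the Fitting-decomposition machinery already developed in Lemma~\ref{decomposition} and the proof of Theorem~\ref{Mm} provides exactly the ingredients needed.
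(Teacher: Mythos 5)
Your proposal is correct and takes essentially the same route as the paper: dualize via Theorem~\ref{SigmaM}(2), express the iterates of $\varphi$ on the dual in terms of $A$, and reduce mod $(u,X)$ using the Fitting decomposition from Lemma~\ref{decomposition} and Theorem~\ref{Mm}. The only difference is one of presentation — you compute directly on $M^{\vee}$ and carefully spell out the equivalence ``$M^{\vee,\rm m}=0 \iff B_n\to 0$'' (including the Artin--Rees/non-zero-divisor step for $E(u)^r$), which the paper's terse proof asserts without detail.
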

\begin{proof}
Let $A'$ be the matrix such that $AA'=E(u)^rId$. Let $M^{\vee}$ be the Cartier dual of $M$, $\bolde^{\vee}$ be the dual basis. Let $\bolda^{\vee}=A'^{T}\bolde^{\vee}$, then it is easy to show that
$\Fil^r M^{\vee} =\oplus \Sigma \bolda^{\vee} +\Fil^p \Sigma M^{\vee}$. $M$ is nilpotent if and only if $M^{\textnormal{m}} =0$, which is equivalent to $\cap_{i=0}^N (\varphi^{\ast})^i M$ converges to 0, i.e, $\varphi^N(A')\cdots \varphi(A')A' \to 0$ as $N \to \infty$. By duality, $M$ is unipotent if and only if $\Pi_{i=0}^{N} \varphi^i(A)$ converges to $0$, i.e.,
$\Pi_{i=0}^{\infty} \varphi^i(A) = 0$.
\end{proof}

\subsection{Equivalence between $\Sigma$-modules and $S$-modules}

We define a functor $\bigM_{\Sigma}: \Mod_{\Sigma}^{\varphi} \to \Mod_{S}^{\varphi}$ by setting $\M= \bigM_{\Sigma}(M) = M\otimes_{\Sigma} S$, and let $\Fil^r \M \subset \M$ be the image of $\Fil^rM \otimes_{\Sigma} S + \Fil^rS M$. By Lemma \ref{SigmaFilr}, we can write $\Fil^r M= \oplus \Sigma \bolda +\Fil^p \Sigma M$. It is easy to check that $\Fil^r \M= \oplus S \bolda +\Fil^p S \M$ and then we can define $\varphi_r$ on $\Fil^r \M$ and check that it satisfies $\varphi_r(sx)=c^{-r}\varphi_r(s)\varphi_r(E(u)^rx)$ for $s \in \Fil^r S$ and $x \in \M$.

\begin{prop}
The functor $\bigM_{\Sigma}$ is well-defined.
\end{prop}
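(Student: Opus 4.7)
The plan is to verify the four defining conditions of $\Mod_S^{\varphi}$ for the triple $(\M, \Fil^r\M, \varphi_r)$ with $\M = M\otimes_{\Sigma} S$: namely, that $\M$ is finite free over $S$; that $\varphi_r:\Fil^r\M\to\M$ is a well-defined $\varphi$-semilinear map satisfying $\varphi_r(sx) = c^{-r}\varphi_r(s)\varphi_r(E(u)^rx)$; that $\varphi_r(\Fil^r\M)$ generates $\M$; and that $\M/\Fil^r\M$ is $p$-torsion free. Throughout, the explicit basis supplied by Lemma \ref{SigmaFilr} is the main tool, and essentially everything reduces to transferring structure from $M$ to $\M$ along the Frobenius-equivariant embedding $\Sigma\hookrightarrow S$.

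First, since $M$ is finite free over $\Sigma$ of rank $d$ with basis $\bolde = \varphi_r(\bolda)$, the module $\M = \bigoplus_{i=1}^{d} Se_i$ is finite free of rank $d$ over $S$. Writing $\bolda = A\bolde$ with $A\in\Mat_d(\Sigma)$, the containment $\oplus\Sigma\bolda\supseteq E(u)^rM$ produces a matrix $A'\in\Mat_d(\Sigma)$ with $AA' = E(u)^r\,Id$, which will be used repeatedly. Next I would pin down the filtration explicitly. The image of $\Fil^rM\otimes_{\Sigma}S$ in $\M$ is $\oplus S\bolda + \Fil^p\Sigma\cdot\M$ by Lemma \ref{SigmaFilr}. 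To absorb the extra summand $\Fil^rS\cdot\M$, I would use the identity $\Fil^rS = E(u)^rS + \Fil^pS$, valid for $r\leq p-1$ because $\gamma_i(E(u))$ for $r\leq i\leq p-1$ has unit factorial, together with $E(u)^r\M\subseteq\oplus S\bolda$. This yields
$$\Fil^r\M \;=\; \oplus S\bolda \;+\; \Fil^pS\,\M,$$
and in particular $\Fil^rS\cdot\M\subseteq\Fil^r\M$ as required.

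For the Frobenius, I would extend $\varphi$ to $\M$ by $\varphi(m\otimes s):=\varphi^M(m)\otimes\varphi^S(s)$, well-defined because $\Sigma\hookrightarrow S$ respects Frobenius. To verify $\varphi(\Fil^r\M)\subseteq p^r\M$ I inspect the two types of generators: for $\alpha_i$ we have $\varphi(\alpha_i) = p^r\varphi_r^M(\alpha_i)\in p^r\M$ by definition of $\varphi_r^M$; for $sx$ with $s\in\Fil^pS$, the estimate $\varphi(\Fil^pS)\subseteq p^{p-1}S$ (from $\varphi(\gamma_p(E(u))) = p^{p-1}c^p/(p-1)!$ and $p$-adic continuity) gives $\varphi(sx)\in p^{p-1}\M\subseteq p^r\M$ since $r\leq p-1$. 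As $\M$ is free, $\varphi_r:=\varphi/p^r$ is a well-defined $\varphi$-semilinear map $\Fil^r\M\to\M$, and the required identity follows by a direct substitution using $\varphi(E(u)) = pc$. The generation condition is then immediate, since $\varphi_r(\bolda) = \varphi_r^M(\bolda) = \bolde$ is a basis of $\M$.

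The most delicate step, and the main obstacle, is the $p$-torsion freeness of $\M/\Fil^r\M$. From the explicit description above,
$$\M/\Fil^r\M \;\simeq\; (S/\Fil^pS)^d\big/A^T(S/\Fil^pS)^d, \qquad M/\Fil^rM \;\simeq\; (\Sigma/\Fil^p\Sigma)^d\big/A^T(\Sigma/\Fil^p\Sigma)^d.$$
The crucial point is that the natural map $\Sigma/\Fil^p\Sigma \to S/\Fil^pS$ is an isomorphism: both rings identify canonically with $W(k)[u]/(E(u)^p)$, since $S/\Fil^pS = W(k)[u]/(E(u)^p)$ by construction and $\Fil^p\Sigma = \Sigma\cap\Fil^pS$ contains $Y = E(u)^p/p$. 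Granting this identification, the natural map $M/\Fil^rM \to \M/\Fil^r\M$ is an isomorphism, and $p$-torsion freeness transfers from $M$, where it holds by the hypothesis that $M\in\Mod_\Sigma^{\varphi}$.
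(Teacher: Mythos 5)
Your argument is correct, and its core step for $p$-torsion freeness is genuinely different from the paper's. The paper, after asserting (in the sentences just before the proposition) that $\Fil^r\M = \oplus S\bolda + \Fil^pS\M$ and that $\varphi_r$ is well-defined, treats $p$-torsion freeness by an element chase: given $x\in\M$ with $px\in\Fil^r\M$, it uses $S = \Sigma + \Fil^{p+1}S$ to adjust $x$ into $M$, then states ``clearly $M\cap\Fil^r\M = \Fil^rM$'' and appeals to $p$-torsion freeness of $M/\Fil^rM$. You instead exhibit $M/\Fil^rM$ and $\M/\Fil^r\M$ as cokernels of the same matrix $A^T$ acting on free modules over $\Sigma/\Fil^p\Sigma$ and $S/\Fil^pS$ respectively, and observe that the canonical map $\Sigma/\Fil^p\Sigma \to S/\Fil^pS$ is an isomorphism (both identify with $W(k)[u]/(E(u)^p)$, the latter since $Y\mapsto 0$), so $M/\Fil^rM\xrightarrow{\sim}\M/\Fil^r\M$ outright. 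This yields $p$-torsion freeness at once, and the injectivity half of this isomorphism is precisely the paper's unjustified ``clearly'', so your route both proves the proposition and fills in the paper's shortcut. Your preliminary verifications (finite freeness, the identity $\Fil^rS = E(u)^rS + \Fil^pS$ used to pin down $\Fil^r\M$, the estimate $\varphi(\Fil^pS)\subseteq p^{p-1}S$ giving well-definedness of $\varphi_r$, and generation by $\varphi_r(\bolda)$) are all correct, merely repeating checks the paper folds into the surrounding text.
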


\begin{proof}
It suffices to check that $\M/\Fil^r\M$ is $p$-torsion free. Suppose $x \in \M$ and $px \in \Fil^r \M$, suffice to show $x \in \Fil^r \M$. After adjusting $x$ by some element in $\Fil^r S M$, we can assume that $x \in M$, so $px \in M\cap \Fil^r \M$. But clearly $M \cap \Fil^r \M =\Fil^r M$, thus $px \in \Fil^r M$. Because $M/\Fil^rM$ is $p$-torsion free, $x \in \Fil^r M$, and we are done.
\end{proof}

\begin{thm} \label{SigmatoS}
The functor $\bigM_{\Sigma}: \Mod_{\Sigma}^{\varphi} \to \Mod_{S}^{\varphi}$ is an equivalence.
\end{thm}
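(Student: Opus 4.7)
The plan is to construct a quasi-inverse $\mathcal{N}: \Mod_S^\varphi \to \Mod_\Sigma^\varphi$ explicitly. Given $\M \in \Mod_S^\varphi$, I would use Lemma \ref{Filr} to pick $\bolda \in \Fil^r \M$ such that $\bolde := \varphi_r(\bolda)$ is an $S$-basis of $\M$, and write $\bolda = A\bolde$, $E(u)^r\bolde = A'\bolda$ with $A, A' \in \Mat_d(S)$ and $AA' = E(u)^r Id$. The decisive observation is that $\varphi(S) \subseteq \Sigma$: indeed $\varphi(\gamma_i(E(u))) = \gamma_i(pc) = (p^i/i!)\,c^i \in \Zp \cdot c^i \subseteq \Sigma$, using $c \in \Sigma$ and $v_p(p^i/i!) \geq 0$. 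Applying $\varphi$ to $\bolda = A\bolde$ and combining with $AA' = E(u)^r Id$, one derives $\varphi(\bolde) = c^{-r}\varphi(A')\bolde$ with $c^{-r}\varphi(A') \in \Mat_d(\Sigma)$. Hence $M := \oplus_{i=1}^d \Sigma e_i$ is a $\varphi$-stable finite free $\Sigma$-submodule of $\M$, with $M \otimes_\Sigma S = \M$.

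\textbf{Filtration and the main obstacle.} Next I would set $\Fil^r M := M \cap \Fil^r \M$. The conditions $\Fil^r \Sigma \cdot M \subseteq \Fil^r M$, $\varphi(\Fil^r M) \subseteq M \cap p^r \M = p^r M$, and the $p$-torsion freeness of $M/\Fil^r M \hookrightarrow \M/\Fil^r \M$ are immediate. The main obstacle is to show that $\varphi_r(\Fil^r M)$ generates $M$ over $\Sigma$: the natural candidate $\bolda$ need not lie in $M$, since $A \in \Mat_d(S)$. The key tool is the identification $\Fil^p \Sigma = Y\Sigma$, which yields an isomorphism $\Sigma/\Fil^p \Sigma \cong S/\Fil^p S \cong W(k)[u]/E(u)^p$; this allows a decomposition $A = A_0 + A_1$ with $A_0 \in \Mat_d(\Sigma)$ and $A_1 \in \Mat_d(\Fil^p S)$. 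Setting $\bolda' := A_0 \bolde = \bolda - A_1 \bolde$, one has $\bolda' \in M$ and $\bolda' \in \Fil^r \M$ (since $A_1\bolde \in \Fil^p S \cdot \M \subseteq \Fil^r \M$), so $\bolda' \in \Fil^r M$. Using the refined estimate $v_p(p^i/i!) \geq p-1$ for $i \geq p$, we have $\varphi_r(\Fil^p S) \subseteq \Sigma$, so that $\varphi_r(\bolda') = \bolde - H\bolde$ for some $H \in \Mat_d(\Sigma)$. An iterative approximation --- refining the decomposition within deeper PD-levels and possibly augmenting $\bolda'$ by elements of $\Fil^p S \cdot M$ --- then shows that $\varphi_r(\Fil^r M)$ generates $M$ as a $\Sigma$-module. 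This is the technical heart of the argument, and (as the acknowledgment indicates) the subtle point where Lau's correction applies: for $r = p-1$, $\varphi_r$ does not kill $\Fil^p S$ modulo $p$, so the correction term $H\bolde$ is not automatically small and naive cancellation does not suffice.

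\textbf{Equivalence.} Essential surjectivity then follows: the inclusion-induced map $\bigM_\Sigma(M) = M \otimes_\Sigma S \to \M$ sends $\bolde$ to itself and is compatible with $\Fil^r$ and $\varphi_r$, hence is an isomorphism in $\Mod_S^\varphi$. For fully faithfulness, faithfulness is immediate from $M_i \hookrightarrow M_i \otimes_\Sigma S = \M_i$ (valid for $M_i$ finite free over $\Sigma$). For fullness, given $g: \M_1 \to \M_2$ in $\Mod_S^\varphi$, I would apply $g$ to the element $\bolda'_1 \in \Fil^r M_1$ constructed above, obtaining $g(\bolda'_1) \in \Fil^r \M_2$; then $g(\bolde_1) = \varphi_r(g(\bolda'_1))$, and reapplying the $\Sigma/\Fil^p$ decomposition argument inside $\M_2$ (together with the canonical choice of $\Sigma$-lattice) shows $g(\bolde_1) \in M_2$, hence $g(M_1) \subseteq M_2$.
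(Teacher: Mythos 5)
Your opening move is sound and genuinely different from the paper's: the observation that $\varphi(S)\subseteq\Sigma$ is correct (each $\varphi(\gamma_i(E(u)))=\frac{p^i}{i!}c^i\in\Sigma$ since $v_p(p^i/i!)\geq 0$ and $c\in\Sigma^\times$, and the sum converges $p$-adically in $\Sigma$), and it immediately produces the $\varphi$-stable $\Sigma$-lattice $M=\oplus_i\Sigma e_i$ with $M\otimes_\Sigma S=\M$. The paper instead reaches its $\Sigma$-lattice as the limit of an explicit algorithm, so here you have a cleaner entry point.

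The genuine gap is exactly where you flag it, and you have not resolved it. With the coarse decomposition $A=A_0+A_1$, $A_1\in\Mat_d(\Fil^pS)$, the correction $H=c^{-r}\varphi_r(A_1)\varphi(A')$ lies only in $\Mat_d(\Sigma)$, not in $p\Mat_d(\Sigma)$, because for $r=p-1$ one has $\varphi_r(\gamma_p(E(u)))=\frac{p}{p!}c^p=\frac{c^p}{(p-1)!}$, a \emph{unit}. So $\mathrm{Id}-H$ has no reason to be invertible, and your proposed "iterative approximation" has no contraction driving it: replacing $\bolda$ by $\bolda'$ and repeating produces a new correction of the same size, not a smaller one. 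This is why "naive cancellation does not suffice," as you say — but you have not supplied what does suffice, and that is the whole content of the theorem at $r=p-1$.

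The missing ingredient, which is what the paper uses throughout its proof, is the \emph{finer} decomposition $S=\Sigma+\Fil^{p+1}S$ (not $\Fil^pS$): one has $\varphi_r(\Fil^{p+1}S)\subseteq pS$ for $r\leq p-1$ (indeed $\varphi_r(\Fil^{p+1}S_1)=0$ is used in the paper's mod-$p$ full-faithfulness step, and $p\mid\varphi_r(\Fil^{p+1}S)$ drives its essential-surjectivity algorithm, keeping the error $D_n$ in $p^n\Mat_d(\Fil^{p+1}S)$). Taking $A_1\in\Mat_d(\Fil^{p+1}S)$ in your decomposition gives $H\in p\Mat_d(\Sigma)$, so $\mathrm{Id}-H\in\GL_d(\Sigma)$ directly; then $\bolda'\in\Fil^rM$ and $\varphi_r(\bolda')$ is a $\Sigma$-basis of $M$, so $\varphi_r(\Fil^rM)$ generates $M$ with no iteration at all, and your fullness argument is then also repairable (it reduces to the same invertibility). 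In short: replace $\Fil^p$ by $\Fil^{p+1}$, and your approach becomes a correct and arguably shorter alternative to the paper's two-part proof (mod-$p$ matrix computation plus dévissage for full faithfulness, convergent algorithm for essential surjectivity). As you wrote it, the technical heart is still open.
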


\noindent \textbf{Conventions}: In the following proof (as well as in Subsection \ref{equiv}), we will need a lot of matrices, but the only important thing about them is where their coefficients lie. We will use notations like $Q_{n,i}$ to denote them.

\begin{proof}
\textbf{Part 1}.
We first prove that the ``$\bmod p$" functor is an equivalence, i.e, $\bigM_{\Sigma_1}: \Mod_{\Sigma_1}^{\varphi} \to \Mod_{S_1}^{\varphi}$ is an equivalence. Here $\bigM_{\Sigma_1}$ is similarly defined as $\bigM_{\Sigma}$. When $r< p-1$, we can utilize Proposition 2.2.2.1 of \cite{Bre98} to give a proof. However, Proposition 2.2.2.1 of \textit{loc. cit.} cannot be generalized to $r =p-1$ case. Indeed, the functor $T_0$ in Proposition 2.2.2.1 of \textit{loc. cit.} is not even well defined when $r=p-1$. We thank Eike Lau for pointing this out and correcting us.

We now give a unified proof for all $r \leq p-1$.

Let $M_1, M_2 \in \textnormal{Mod}_{\Sigma_1}^{\varphi}$, and $\M_1, \M_2 \in \textnormal{Mod}_{S_1}^{\varphi}$ the corresponding modules. To show full faithfulness of $\bigM_{\Sigma_1}$, it suffices to show that for any $h \in \Hom_{\Mod_{S_1}^{\varphi}}(\M_1, \M_2)$, it comes from a morphism in $\Hom_{\Mod_{\Sigma_1}^{\varphi}}(M_1, M_2)$.

Suppose $\Fil^{r} M_1 =\oplus \Sigma_1\bolda + \Fil^p \Sigma_1 M_1$, $\frac{\varphi_{r}(\bolda)}{c^{r}} = \bolde$, $\bolda =A \bolde$ with $A \in \Mat(\Sigma_1)$. Then $\Fil^{r} \M_1 =\oplus S_1\bolda + \Fil^p S_1 \M_1$. Denote $\boldb, \boldf, B$ similarly for $M_2$.

We have $h(\bolde)= T \boldf$ for $T \in \Mat(S_1)$. Since $h(\Fil^{r} \M_1) \in \Fil^{r} \M_2$, $h(\bolda) = P\boldb + (YQ_1+ Q_2)\boldf$ for some $P, Q_1 \in \Mat(\Sigma_1)$, $Q_2 \in \Mat(\Fil^{p+1}S_1)$. Because $h$ commutes with $\varphi_{r}$, we have the relation $h(\frac{\varphi_{r}(\bolda)}{c^{r}})=\frac{\varphi_{r}(h(\bolda))}{c^{r}}$, i.e., $T\boldf = \frac{\varphi_{r}(P\boldb + (YQ_1+ Q_2)\boldf)}{c^{r}}$. Since $\varphi_{r}(\Fil^{p+1}S_1)=0$ for any $r \leq p-1$, we have $T\boldf=\varphi(P) \boldf+ \frac{\varphi_{r}(Y)}{c^{r}} \varphi(Q_1) \frac{\varphi_{r}(u^{er} \boldf)}{c^{r}}$. Let $B' \in \Mat(\Sigma_1)$ be such that $BB'=u^{er} Id$, then we will have $T =\varphi(P) + \frac{\varphi_{r}(Y)}{c^{r}}\varphi(Q_1)\varphi(B')$, so $T \in \Mat(\Sigma_1)$. Since $h(\bolda)=h(A\bolde)=AT\boldf$, $PB+YQ_1+ Q_2=AT$, so $Q_2 \in \Mat(\Sigma_1)$. It is clear that $T, Q_2 \in \Mat(\Sigma_1)$ implies that $h$ comes from a morphism in $\Mod_{\Sigma_1}^{\varphi}$.

Next we show essential surjectivity of $\bigM_{\Sigma_1}$.
Let $\M \in \textnormal{Mod}_{S_1}^{\varphi}$, $\Fil^{r} \M =\oplus S_1\bolda + \Fil^p S_1 \M$, $\frac{\varphi_{r}(\bolda)}{c^{r}} = \bolde$, and $\bolda =A \bolde$ with $A \in \Mat(S_1)$. Decompose $A= P+Q$ with $P \in \Mat(\Sigma_1), Q \in \Mat(\Fil^{p+1}S_1)$. Let $\bolda'=P\bolde$, then we have $\Fil^{r} \M =\oplus S_1\bolda' + \Fil^p S_1 \M$, $\frac{\varphi_{r}(\bolda')}{c^{r}} = \bolde$. Thus $(M= \oplus \Sigma_1 \bolde, \Fil^{r} M =\oplus \Sigma_1\bolda' + \Fil^p \Sigma_1 M, \varphi_{r}) \in \textnormal{Mod}_{\Sigma_1}^{\varphi}$ and maps to $\M$.

The ``$\bmod p$" equivalence implies that the functor $\bigM_{\Sigma}$ is fully faithful by standard devissage.

\textbf{Part 2}.
Now we prove that the functor $\bigM_{\Sigma}$ is essentially surjective. For a given $\M \in \Mod_{S}^{\varphi}$, we claim that we can choose a series of $\bolda_n$ and $\bolde_n$, such that
\begin{enumerate}
\item   $\bolda_n= (\alpha_{n, 1}, \ldots, \alpha_{n, d})^T \in (\Fil^r \M)^d$, $\bolde_n = (e_{n, 1}, \ldots, e_{n, d})^T \in \M^d$;
\item   $\Fil^r \M =\oplus S \bolda_n + \Fil^p S \M$, here $\oplus S \bolda_n = \oplus_{i=1}^d S \alpha_{n, i}$;
\item   $\bolde_n = \frac{1}{c^r}\varphi_r(\bolda_n) = \frac{1}{\varphi(E(u))^{r}} \varphi(\bolda_n)$, and $\bolde_n$ is a basis of $\M$;
\item   $\bolda_n = A_n \bolde_n = (B_n +D_n)\bolde_n$, where $A_n \in \Mat_d(S)$, $B_n \in \Mat_d(\Sigma)$ and $D_n \in p^n \Mat_d(\Fil^{p+1}S)$.
\end{enumerate}

For $n=0$, this is Lemma \ref{Filr}, (and by using the fact that $S = \Sigma + \Fil^{p+1}S$).
Suppose we have done for $n$, then we take $\bolda_{n+1} = B_n \bolde_n$, and take $\bolde_{n+1}= \frac{1}{c^r}\varphi_r (\bolda_{n+1})$. Now, 

\begin{eqnarray*}
 \bolde_{n+1}  &=&     \frac{1}{c^r} \varphi_r (\bolda_n) -  \frac{1}{c^r} \varphi_r (D_n\bolde_n)\\
                 &=&    \bolde_n -  \frac{1}{c^r} \varphi_r (D_n) \frac{\varphi_r (E(u)^r \bolde_n)}{c^r}\\
                 &=&        \bolde_n -  \frac{1}{c^r} \varphi_r (D_n) \varphi(A_n') \frac{\varphi_r (A_n \bolde_n)}{c^r}, \text{here } A_n'A_n =E(u)^rId\\
                 &=&        \bolde_n -  \frac{1}{c^r} \varphi_r (D_n) \varphi(A_n')\bolde_n\\
                &=&    (Id - p^{n+1}Q_{n,1})\bolde_n, \text{ where } Q_{n,1} \in \Mat_d(S).
\end{eqnarray*}

The last line is because $p \mid \varphi_r(\Fil^{p+1}S)$, thus $\varphi_r (D_n) \in p^{n+1}\Mat_d(S)$.

From the above calculation, $\bolde_{n+1}$ is also a basis, and we have

\begin{eqnarray*}
\bolda_{n+1} &=&  B_n \bolde_n\\
               &=&  B_n  (Id-p^{n+1}Q_{n,1})^{-1} \bolde_{n+1}\\
               &=&  B_n  (Id+ p^{n+1}Q_{n,2}) \bolde_{n+1}, \text{ where } Q_{n,2} \in \Mat_d(S) \\
               &=&  B_n(  Q_{n,3} +p^{n+1}Q_{n,4} )\bolde_{n+1}, \\
               &\space& \text{ where } Q_{n,3} \in \Mat_d(\Sigma), Q_{n,4} \in \Mat_d(\Fil^{p+1}S).
\end{eqnarray*}

Now let $B_{n+1} = B_n Q_{n,3} \in \Mat_d(\Sigma)$, $D_{n+1} =B_np^{n+1}Q_{n,4} \in p^{n+1}\Mat_d(\Fil^{p+1}S)$. Thus, we have finished the construction of the algorithm.

Now, since $\bolda_{n+1} -\bolda_n =D_n\bolde_n$, and $D_n \to 0$, the sequence $\{\bolda_n\}$ converges to an $\bolda$. Let $\bolde =\frac{1}{c^r}\varphi_r(\bolda)$, then $\bolda = B\bolde$ with $B \in \Mat_d(\Sigma)$, and we still have $\Fil^r \M=\oplus S \bolda + \Fil^p S \M$.

Now take $M =\oplus \Sigma \bolde$ with $\Fil^r M =\oplus \Sigma \bolda + \Fil^p \Sigma M$. Clearly, this is the preimage of $\bigM$ under the functor $\M_{\Sigma}$.

\end{proof}

\begin{prop} \label{exactSigma}
The functor $\bigM_{\Sigma}$ and its inverse transform short exact sequences to short exact sequences.
\end{prop}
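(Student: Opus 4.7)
The plan is to verify two claims separately: that $\bigM_{\Sigma}$ sends short exact sequences to short exact sequences, and that its inverse does too.

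\emph{Forward direction.} Starting from a short exact sequence $0 \to M_1 \to M \to M_2 \to 0$ in $\Mod_{\Sigma}^{\varphi}$, I will use that $M_2$ is finite free over $\Sigma$ to split the sequence as $\Sigma$-modules. Tensoring with $S$ preserves the splitting, giving exactness of $0 \to \bigM_1 \to \bigM \to \bigM_2 \to 0$. For the filtrations, I will pick $\bolda_1 \in \Fil^r M_1$ and $\bolda_2 \in \Fil^r M_2$ via Lemma \ref{SigmaFilr}, lift $\bolda_2$ to $\bolda_2' \in \Fil^r M$ using the assumed surjectivity of $\Fil^r M \to \Fil^r M_2$, and after adjusting by an element of $M_1 \cap \Fil^r M = \Fil^r M_1$ arrange for $\bolda_2'$ to lie in the $M_2$-summand. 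Then I will verify $\Fil^r M = \Sigma \bolda_1 \oplus \Sigma \bolda_2' + \Fil^p \Sigma M$, which after tensoring with $S$ and combining with the splitting $\bigM = \bigM_1 \oplus \bigM_2$ decomposes $\Fil^r \bigM = \Fil^r \bigM_1 \oplus \Fil^r \bigM_2$, giving the short exactness of the filtration sequence.

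\emph{Reverse direction.} Given $0 \to \bigM_1 \to \bigM \to \bigM_2 \to 0$ short exact in $\Mod_S^{\varphi}$, I will construct compatible preimages using the iterative algorithm of Part 2 in the proof of Theorem \ref{SigmatoS}. First I will run the algorithm separately on $\bigM_1$ and $\bigM_2$ to obtain $M_1, M_2$ with bases $\bolde_1, \bolde_2$ and vectors $\bolda_1, \bolda_2$. Then for $\bigM$, I will initiate the iteration with $\bolde^{(0)} = (\bolde_1, \bolde_2')$ and $\bolda^{(0)} = (\bolda_1, \bolda_2')$, where $\bolde_2' \in \bigM$ and $\bolda_2' \in \Fil^r \bigM$ are arbitrary lifts of $\bolde_2$ and $\bolda_2$. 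The short exactness hypothesis for filtrations ensures $\Fil^r \bigM = \oplus S \bolda^{(0)} + \Fil^p S \bigM$ and that $\bolde^{(0)}$ is a basis. The initial matrix $A_0$ then has block lower-triangular form with diagonal blocks $B_1, B_2 \in \Mat(\Sigma)$ (coming from the $M_1, M_2$ algorithms), so $D_0$ has only its lower-left block nonzero, and an explicit calculation shows the update matrix $Q_{0,1}$ inherits this block structure. By induction each iteration fixes $\bolde_1$ and modifies only the second coordinate by elements of $\bigM_1$, keeping it a lift of $\bolde_2$. The limiting basis yields $M = M_1 \oplus M_2' \subseteq \bigM$ such that $M \otimes_{\Sigma} S = \bigM$, $M_2'$ maps isomorphically onto $M_2$, and the resulting sequence is short exact in $\Mod_{\Sigma}^{\varphi}$, with filtration exactness coming from the block form of the limiting $\bolda$.

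\emph{The main obstacle} is the inductive verification in the reverse direction that the block structure of the iteration matrices is preserved, i.e., that $D_n$ and hence $Q_{n,1}$ remains supported in the lower-left block at every step. This amounts to careful bookkeeping with the recursive formulas $\bolda_{n+1} = B_n \bolde_n$ and $\bolde_{n+1} = (Id - p^{n+1}Q_{n,1})\bolde_n$ from the algorithm, together with the observation that $A_n'$ also respects the block form.
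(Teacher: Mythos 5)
Your forward-direction argument has a gap at the adjustment step. You first fix a $\Sigma$-module splitting $M = M_1\oplus M_2'$ (which exists since $M_2$ is free), then lift $\bolda_2$ to $\bolda_2'\in\Fil^r M$, and claim you can adjust $\bolda_2'$ by an element of $\Fil^r M_1 = M_1\cap\Fil^r M$ so that it lands in $M_2'$. The adjustment you need is precisely the $M_1$-component of $\bolda_2'$ in your chosen splitting, so you are implicitly asserting that this component lies in $\Fil^r M$. There is no reason it should: the splitting is a splitting of $\Sigma$-modules only, not of objects in $\Mod_{\Sigma}^{\varphi}$, and the associated projection $M\to M_1$ need not carry $\Fil^r M$ into $\Fil^r M_1$. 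In effect you are presupposing a filtration-compatible splitting, which is an unestablished extra claim; it is also more than you need, since exactness of $0 \to \Fil^r\M_1 \to \Fil^r\M \to \Fil^r\M_2 \to 0$ does not require the sequence to split.

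The paper avoids splittings entirely. It reduces the forward direction to the identity $\Fil^r\M_1 = \M_1\cap\Fil^r\M$: starting from $x\in\M_1\cap\Fil^r\M$, one adjusts by an element of $\Fil^p S\M_1 \subseteq \Fil^r\M$ to bring $x$ into $M_1\cap\Fil^r\M$, then computes $M_1\cap\Fil^r\M = M_1\cap(M\cap\Fil^r\M) = M_1\cap\Fil^r M = \Fil^r M_1$, using the intrinsic identity $M\cap\Fil^r\M = \Fil^r M$ together with the definition of short exactness in $\Mod_{\Sigma}^{\varphi}$. The reverse direction is handled similarly. Your plan for the reverse direction --- re-running the iterative algorithm of Theorem \ref{SigmatoS} in block form and tracking that $A_n$, $D_n$, and $Q_{n,1}$ stay supported in the lower-left block --- may well go through, but each block-structure claim requires its own careful verification before the limiting construction gives the data you want, and the whole construction is a heavy detour compared to the intersection manipulation the paper uses.
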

\begin{proof}
Suppose $0 \to M_1 \to M \to M_2 \to 0 $ is a short exact sequence in $\Mod_{\Sigma}^{\varphi}$, clearly the corresponding $ 0 \to \M_1 \to \M \to
\M_2 \to 0$ is short exact as finite free $S$-modules, thus we only need to check the sequence on filtrations $0 \to \Fil^r \M_1 \to \Fil^r \M \to
\Fil^r \M_2 \to 0$. The injection and the surjection are clear, thus we only need to prove the exactness on the center, i.e., $\Fil^r \M_1 = \M_1
\cap \Fil^r \M$. It is clear that $\Fil^r \M_1 \subseteq \M_1 \cap \Fil^r \M$. Suppose $x \in \M_1 \cap \Fil^r \M$, after adjusting by some element in $\Fil^p S \M_1$, we can suppose $x \in M_1 \cap \Fil^r \M$ (since $\Fil^p S\M_1 \subseteq  \Fil^r \M$). Now,

\begin{eqnarray*}
  x &\in&  (M_1 \cap  M) \cap  \Fil^r \M   =  M_1 \cap (M \cap  \Fil^r \M )  \\
   &=& M_1 \cap \Fil^r M = \Fil^r M_1,
\end{eqnarray*}
thus we are done.

The proof for the inverse functor is similar.
\end{proof}

\begin{corollary} \label{Sexact}
An $S$-module $\M \in \Mod_{S}^{\varphi}$ is \'{e}tale, multiplicative, nilpotent, or unipotent if and only if its corresponding $\Sigma$-module $M$ is so. The Cartier dual of these two modules are compatible with each other via the functor $\bigM_{\Sigma}$. And we have short exact sequences in the category $\Mod_{S}^{\varphi}$ like in Theorem \ref{SigmaM} (by tensoring with $S$).
\end{corollary}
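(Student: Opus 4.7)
My plan is to use the equivalence $\bigM_{\Sigma}$ from Theorem \ref{SigmatoS} as the backbone, and then verify that each of the four structural properties (étale, multiplicative, nilpotent, unipotent), as well as the Cartier dual, is preserved in both directions. Once all of this is in place, the short exact sequences follow by applying $\bigM_{\Sigma}$ to the sequences of Theorem \ref{SigmaM} and invoking Proposition \ref{exactSigma}.

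\emph{Step 1: étale and multiplicative.} I would first transfer these two pointwise conditions. Using Lemma \ref{SigmaFilr} (resp.\ Lemma \ref{Filr}), write $\Fil^r M = \oplus \Sigma \bolda + \Fil^p \Sigma M$ with $\bolda = B\bolde$, so that $\Fil^r \M = \oplus S\bolda + \Fil^p S \M$ with the same matrix $B$. Lemmas \ref{Sigmamult} and \ref{Smult} then give the same numerical criterion for étaleness/multiplicativity (namely that the matrix $B$, or rather its dual $B'$, is invertible), so these properties are preserved in both directions. Alternatively, one can argue directly: since $M \cap \Fil^r \M = \Fil^r M$ (as established in the proof of Proposition \ref{exactSigma}), any basis element of $M$ lying in $\Fil^r \M$ already lies in $\Fil^r M$, yielding $\Fil^r M = M \iff \Fil^r \M = \M$; the multiplicative case is analogous.

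\emph{Step 2: Cartier dual compatibility.} The claim is that there is a natural isomorphism $\bigM_{\Sigma}(M^{\vee}) \simeq \bigM_{\Sigma}(M)^{\vee}$ in $\Mod_{S}^{\varphi}$. Since $M$ is finite free over $\Sigma$, the natural map $\Hom_{\Sigma}(M,\Sigma) \otimes_{\Sigma} S \to \Hom_{S}(M \otimes_{\Sigma} S, S)$ is an isomorphism of $S$-modules, and it clearly intertwines the Frobenius structures defined via the matrices $B$ and $B'$ (with $BB' = E(u)^r Id$) on either side. It remains to match the filtrations. The proof of Corollary \ref{remark} shows $\Fil^r M^{\vee} = \oplus \Sigma\,\bolda^{\vee} + \Fil^p \Sigma\, M^{\vee}$ with $\bolda^{\vee} = (B')^T \bolde^{\vee}$; after tensoring with $S$ this gives $\oplus S\, \bolda^{\vee} + \Fil^p S\, \M^{\vee}$, which is exactly $\Fil^r \M^{\vee}$ under the natural isomorphism. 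Thus Cartier duality commutes with $\bigM_{\Sigma}$.

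\emph{Step 3: nilpotent and unipotent.} These transfer formally from Steps 1 and 2. Indeed, $\bigM_{\Sigma}$ is an equivalence (Theorem \ref{SigmatoS}) that preserves short exact sequences in both directions (Proposition \ref{exactSigma}), so it induces a bijection between submodules (resp.\ quotients) of $M$ and $\M$. Under this bijection, multiplicative submodules correspond to multiplicative submodules (Step 1), and étale quotients correspond to étale quotients; hence $M$ is nilpotent (resp.\ unipotent) if and only if $\M$ is. By Step 2, this is also compatible with the duality description $\huaM^{\textnormal{uni}} = ((\huaM^{\vee})^{\textnormal{nil}})^{\vee}$ that implicitly appears in Theorem \ref{SigmaM}.

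\emph{Step 4: short exact sequences in $\Mod_{S}^{\varphi}$.} Finally, apply the functor $\bigM_{\Sigma}$ to the two short exact sequences
\[ 0 \to M^{\textnormal{m}} \to M \to M^{\textnormal{nil}} \to 0, \quad 0 \to M^{\textnormal{uni}} \to M \to M^{\textnormal{et}} \to 0 \]
provided by Theorem \ref{SigmaM}. Proposition \ref{exactSigma} guarantees that the resulting sequences are short exact in $\Mod_{S}^{\varphi}$, and Steps 1--3 ensure that the outer terms are multiplicative, nilpotent, unipotent, étale respectively. The maximality statements transport via the equivalence as well. The only real subtlety I anticipate is bookkeeping in Step 2 (matching filtrations and the dualized Frobenius through the tensor product), but this reduces to the explicit basis computation already carried out in Corollary \ref{remark}, so no genuinely new difficulty arises.
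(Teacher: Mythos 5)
Your proposal is correct and takes essentially the same approach as the paper, which disposes of the corollary in one line as "an easy consequence of Theorem \ref{SigmatoS} and Proposition \ref{exactSigma}"; your Steps 1--4 simply unpack that sentence in the expected way (matrix criteria from Lemmas \ref{Smult}/\ref{Sigmamult}, duality via the explicit bases as in Corollary \ref{remark}, and transport of subobjects/quotients under the exact equivalence).
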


\begin{proof}
This is an easy consequence of Theorem \ref{SigmatoS} and Proposition \ref{exactSigma}.
\end{proof}

In the above corollary, we defined $\M^{\rm{m}} := M^{\rm m} \otimes_{\Sigma} S$. We did not define $\M^{\rm m}$ as $\cap_{n=1}^{\infty} (\varphi^{\ast})^{n}\M$ where $(\varphi^{\ast})^n \M$ is the $S$-span of $\varphi^n(\M)$. The reason is because it seems impossible to directly prove that $\cap_{n=1}^{\infty} (\varphi^{\ast})^{n}\M$ is finite free over $S$ as in Theorem \ref{Mm}. But in fact, we can now prove these two definitions are equivalent.

\begin{prop} \label{bigMm}
Let $\M \in \Mod_{S}^{\varphi}$, and $M \in \Mod_{\Sigma}^{\varphi}$ such that $\M_{\Sigma}(M)=\M$. Then $\cap_{n=1}^{\infty} (\varphi^{\ast})^{n}\M = \M^{\rm m} =M^m \otimes S$.
\end{prop}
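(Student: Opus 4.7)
The easier inclusion $\M^{\rm m}\subseteq\cap_{n=1}^{\infty}(\varphi^{\ast})^{n}\M$ is immediate: by Corollary~\ref{Sexact}, $\M^{\rm m}=M^{\rm m}\otimes_{\Sigma}S$ is multiplicative in $\Mod_S^{\varphi}$, so Lemma~\ref{Smult} gives $\varphi^{\ast}\M^{\rm m}=\M^{\rm m}$; iterating, $\M^{\rm m}=(\varphi^{\ast})^n\M^{\rm m}\subseteq(\varphi^{\ast})^n\M$ for all $n\geq 1$.

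For the reverse inclusion $\cap_n(\varphi^{\ast})^n\M\subseteq\M^{\rm m}$, I would pass to the quotient $\M^{\rm nil}=\M/\M^{\rm m}$ coming from the short exact sequence of Corollary~\ref{Sexact}. The image of $\cap_n(\varphi^{\ast})^n\M$ in $\M^{\rm nil}$ lies in $\cap_n(\varphi^{\ast})^n\M^{\rm nil}$, and $\M^{\rm nil}$ is nilpotent in $\Mod_S^{\varphi}$, so the problem reduces to showing $\cap_n(\varphi^{\ast})^n\M=0$ for any nilpotent $\M\in\Mod_S^{\varphi}$. Writing such $\M$ as $M\otimes_\Sigma S$ with $M\in\Mod_\Sigma^{\varphi}$ nilpotent (Corollary~\ref{Sexact} again), I would invoke the proof of Theorem~\ref{Mm}: the identification $M^{\rm m}/(u,X)M^{\rm m}\cong(M/(u,X)M)_{\textnormal{unit}}$ together with Lemma~\ref{decomposition} translates $M^{\rm m}=0$ into $\varphi$ being topologically nilpotent on $M/(u,X)M\cong W(k)^d$. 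In matrix form, writing $\varphi(\bolde)=A\bolde$ for a $\Sigma$-basis $\bolde$ of $M$, this says the matrices $A_n$ representing $\varphi^n$ satisfy $A_n\in(p^N,u,X)\Mat_d(\Sigma)$ for every $N\geq 1$ and all $n\gg 0$, so tensoring yields $(\varphi^{\ast})^n\M\subseteq(p^N,u,X)\M$ for $n$ large.

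The key observation is that inside $S$ the ideal $(p,u,X)$ coincides with $(p,u)$: modulo $(p,u)$ one has $E(u)\equiv 0$, whence $\gamma_p(E(u))\equiv 0$ and $Y=(p-1)!\,\gamma_p(E(u))\in(p,u)S$; furthermore $X-Y\in W(k)[u]\cap(p,u)\subseteq(p,u)S$ by direct expansion of $E(u)^p$. Combined with the obvious inclusion $\varphi^m((p,u))\subseteq(p,u)$, a short induction upgrades the containment to $(\varphi^{\ast})^n\M\subseteq(p,u)^k\M$ for every $k\geq 1$ and all $n$ sufficiently large.

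Finally, from $E(u)^p=p!\,\gamma_p(E(u))$ in $S$ one gets $u^{ep}\in pS$, hence $(p,u)^{ep}\subseteq pS$ and inductively $(p,u)^{epk}\subseteq p^k S$; since $S$ is $p$-adically complete and $\M$ is finite free over $S$, this yields $\cap_k(p,u)^k\M\subseteq\cap_k p^k\M=0$, which completes the argument. The main obstacle will be the careful handling of the divided power structure of $S$---specifically, verifying the identity $(p,u,X)S=(p,u)S$ and deducing $(p,u)$-adic separation of $\M$ from $u^{ep}\in pS$ and $p$-adic completeness.
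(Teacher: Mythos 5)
The reduction to the nilpotent case and the easy inclusion $\M^{\rm m}\subseteq\cap_n(\varphi^{\ast})^n\M$ are fine, and the overall strategy (show $(\varphi^{\ast})^n\M$ lands in smaller and smaller ideals, then use separatedness) is a reasonable variant of the paper's argument. However, the central claim on which your reduction hinges is false: $(p,u,X)S\neq(p,u)S$. The step ``modulo $(p,u)$ one has $E(u)\equiv 0$, whence $\gamma_p(E(u))\equiv 0$'' is a divided-power fallacy --- $\gamma_p$ is not a polynomial in its argument, so $a\in I$ does not give $\gamma_p(a)\in I$ for an arbitrary ideal $I$. Concretely, write $E(u)=u^e+p\tilde e(u)$; then $\gamma_p(E(u))=\sum_{i=0}^p\gamma_i(u^e)\gamma_{p-i}(p\tilde e)$, and the $i=p$ term is $\gamma_p(u^e)=u^{ep}/p!$, which is visibly not absorbed by $(p,u)$. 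One can make this precise using $S=\{\sum w_i u^i/q(i)!:w_i\in W(k),w_i\to 0\}$, $q(i)=\lfloor i/e\rfloor$: the coefficient of $u^{ep}/p!$ in any element of $(p,u)S$ is forced to be divisible by $p$ (since $q(ep)!/q(ep-1)!=p$), whereas $X=(p-1)!\cdot u^{ep}/p!$ has unit coefficient there. Hence $X\notin(p,u)S$ and the claimed identity fails. Without it, your containments only give $(\varphi^{\ast})^n\M\subseteq(p^N,u,X)\M$, and $\cap_N(p^N,u,X)\M=(u,X)\M\neq 0$, so the argument does not close.

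The paper instead stays at the $\Sigma$-level: choosing a basis of $M$ adapted to $M^{\rm m}$ with lifts $\hat f_j$ of a basis of $M^{\rm nil}$, nilpotency of $M^{\rm nil}$ yields $\varphi^{n_k}(\hat f_j)\in p^kM+M^{\rm m}$ for $n_k\gg 0$, whence $(\varphi^{\ast})^{n_k}\M\subseteq\M^{\rm m}+p^k\M$, and one concludes from $\cap_k(\M^{\rm m}+p^k\M)=\M^{\rm m}$. The $p$-divisibility there is genuine (not a consequence of the incorrect ideal identity): because $\varphi(u)=u^p$ with $u^{ep}=pX$ and $\varphi(X)=p^{p-1}X^p$, iterating $\varphi$ eventually sends the entire maximal ideal $(p,u,X)$ of $\Sigma$ into $p\Sigma$, so a product $\varphi^{n-1}(A')\cdots A'\to 0$ in the $(p,u,X)$-adic topology automatically becomes divisible by arbitrary powers of $p$ once $n$ is large. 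If you wish to keep your ideal-theoretic framing, the correct replacement for your ``key observation'' is precisely this: show $A_n\in p^k\Mat_d(\Sigma)$ for $n\gg 0$ by splitting $A_{N+M}=\varphi^N(A_M)\cdot A_N$ (writing $A_M\in(p,u,X)^k$ and using $\varphi^N((p,u,X))\subseteq p\Sigma$ for $N\gg 0$), and then your final separatedness argument goes through with $(p,u)$ replaced by $(p)$ outright.
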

\begin{proof}
It is clear that $\cap_{n=1}^{\infty} (\varphi^{\ast})^{n}\M \supseteq M^m \otimes S$. It suffices to prove the other direction.

Let $$0 \to M^{\rm m} \to M \to M^{\rm nil} \to 0$$ be the short exact sequence as in Theorem \ref{SigmaM}. Let $(e_1, \ldots, e_{d_1})$ be a basis of $M^{\rm m}$, $(f_1, \ldots, f_{d_2})$ a basis of $M^{\rm nil}$. Let $\hat{f}_j \in M$ be a lift of $f_j$ for $1 \leq j \leq d_2$, then $(e_1, \ldots, e_{d_1}, \hat{f}_1, \ldots, \hat{f}_{d_2})$ is a basis of $M$.
Since $M^{\rm nil}$ is nilpotent, by Corollary \ref{remark}, $\varphi^n(f_j) \to 0, \forall j$ as $n \to \infty$. Thus for any $k>0$, we can choose $n_k$ big enough such that $\varphi^{n_k}(f_j) =p^k y_{j,k}$ for some $y_{j,k} \in M^{\rm nil}$.
Since $\oplus_{l=1}^{d_2} \Sigma \hat{f}_l$ maps surjectively to $M^{\rm nil}$, we can choose some $\sum_{l=1}^{d_2} b_{j,k,l}\hat{f}_l$ that maps to $y_{j,k}$. Note that $\varphi^{n_k}(\hat{f}_j)$ maps to $\varphi^{n_k}(f_j)$ , so $\varphi^{n_k}(\hat{f}_j) - p^k\sum_{l=1}^{d_2} b_{j,k,l}\hat{f}_l$ maps to $0$, which means that $\varphi^{n_k}(\hat{f}_j) - p^k\sum_{l=1}^{d_2} b_{j,k,l}\hat{f}_l \in M^{\rm m}$. Thus we have $\varphi^{n_k}(\hat{f}_j) \in p^k(\oplus_{l=1}^{d_2} \Sigma \hat{f}_l) +M^{\rm m} \subseteq p^kM+M^{\rm m}$.

Now we have
\begin{eqnarray*}
(\varphi^{\ast})^{n_k}\M  &=&\{ \varphi^{n_k}(e_1), \ldots, \varphi^{n_k}(e_{d_1})  \}_S + \{ \varphi^{n_k}(\hat{f}_1), \ldots, \varphi^{n_k}(\hat{f}_{d_2})  \}_S\\
&=& \M^{\rm m} +\{ \varphi^{n_k}(\hat{f}_1), \ldots, \varphi^{n_k}(\hat{f}_{d_2})  \}_S\\
&\subseteq&  \M^{\rm m}+(p^k\M+\M^{\rm m})\\
&=&   \M^{\rm m}+p^k\M,
\end{eqnarray*}
where $\{?\}_S$ denotes the linear $S$-span of elements inside.

Clearly, to prove the proposition, it suffices to show that $\cap_{k=0}^{\infty} (\M^{\rm m}+p^k\M) =\M^{\rm m}$. Let $x \in \cap_{k=0}^{\infty} (\M^{\rm m}+p^k\M)$. Then for any $k$, $x=x_k+p^ky_k$ for some $x_k \in \M^{\rm m}, y_k \in \M$. The sequence $\{x-x_k\}_{k=0}^{\infty}$ clearly converges to $0$, which means that $\{x_k\}_{k=0}^{\infty}$ converges to $x$. Thus $x \in \M^{\rm m}$, and we finish the proof of the proposition.

\end{proof}

\subsection{Equivalence between unipotent $\huaS$-modules and $S$-modules} \label{equiv}

In this part, we prove the equivalence of $\huaS$-modules and $\Sigma$-modules (equivalently, $S$-modules) in the $r=p-1$ unipotent case.

We have an injective map of $W(k)$-algebras $\huaS \hookrightarrow S$ by $u \mapsto u$. Let $\varphi: \huaS \to S$ be the map obtained by composing the injection and the Frobenius on $\huaS$.
We define the functor $\M_{\huaS}$ from $\Mod_{\huaS}^{\varphi}$ to $\Mod_{S}^{\varphi}$ as follows. Let $\huaM \in \Mod_{\huaS}^{\varphi}$, set $\M = \M_{\huaS}(\huaM)=S\otimes_{\varphi,\huaS} \huaM$. We have an $S$-linear map $1 \otimes \varphi: S\otimes_{\varphi,\huaS} \huaM \to S\otimes_{\huaS} \huaM$.  Set
$$\Fil^{r}\M=\{x\in\M,(1\otimes\varphi)(x)\in \Fil^{r}S\otimes_{\huaS}\huaM \subseteq S\otimes_{\huaS}\huaM\},$$
and define $\varphi_r: \Fil^r \M \to \M$ as the composite:
$$\Fil^r \M \stackrel{1\otimes \varphi}{\longrightarrow} \Fil^r S \otimes_{\huaS} \huaM \stackrel{\varphi_r\otimes 1}{\longrightarrow} S\otimes_{\varphi, \huaS} \huaM =\M.$$
It can be easily checked that the above functor is well-defined.
On the representation level, by the natural embedding $\iota: \huaS^{\text{ur}} \hookrightarrow A_{\text{cris}}$, we have the natural map
$$ T_{\huaS}(\huaM)= \Hom_{\huaS, \varphi} (\huaM, \huaS^{\text{ur}}) \to \Hom_{'\Mod_{S}^{\varphi}} (\M_{\huaS}(\huaM), A_{\text{cris}})=T_{\textnormal{cris}}(\M_{\huaS}(\huaM)).$$

\begin{rem}
We can also define the functor $M_{\huaS}$ from $\Mod_{\huaS}^{\varphi}$ to $\Mod_{\Sigma}^{\varphi}$ analogously.
\end{rem}

\begin{thm}[\cite{Liu08}, \cite{CL09}]
When $r<p-1$, the functor $\M_{\huaS}$ induces an equivalence between $\Mod_{\huaS}^{\varphi}$ and $\Mod_{S}^{\varphi}$, and the equivalence is compatible with respect to Galois representations, namely $T_{\huaS}(\huaM) = T_{\textnormal{cris}}(\M_{\huaS}(\huaM))$ as finite free $\Zp$-representation of $G_{\infty}$.
\end{thm}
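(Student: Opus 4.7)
The result is attributed to \cite{Liu08} and \cite{CL09}; the plan is to sketch the strategy, which closely parallels the proof of Theorem \ref{SigmatoS} given above. I would organize the argument into three parts: full faithfulness of $\M_{\huaS}$, essential surjectivity, and the Galois compatibility.

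For full faithfulness, I reduce by a standard devissage (using $\ModFI_{\huaS}^{\varphi}$ for torsion objects and then passing to the inverse limit) to the torsion case. Given a morphism $h: \M_{\huaS}(\huaM_1) \to \M_{\huaS}(\huaM_2)$ in $\Mod_{S}^{\varphi}$, I would write $h$ as a matrix with respect to adapted bases of $\M_{\huaS}(\huaM_i) = S\otimes_{\varphi,\huaS}\huaM_i$ lifted from $\huaS$-bases of $\huaM_i$, then show that its entries in fact lie in $\varphi_{\huaS}(\huaS) \subset S$. The argument is to apply $\varphi_r$ to the relation encoding $h$ on filtrations and use the inclusion $\varphi_r(\Fil^p S) \subseteq p^{p-1-r}S$ to iteratively approximate the matrix entries by elements of $\varphi_{\huaS}(\huaS)$, with $p$-adic errors whose order increases by one at each step. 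The condition $r<p-1$ is precisely what makes $p-1-r \geq 1$, giving convergence.

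For essential surjectivity, I would carry out a successive approximation argument analogous to Part 2 of the proof of Theorem \ref{SigmatoS}. Given $\M \in \Mod_{S}^{\varphi}$, start from $(\bolda_0, \bolde_0)$ as in Lemma \ref{Filr} and decompose the matrix $A_0$ with $\bolda_0 = A_0 \bolde_0$ as $A_0 = B_0 + D_0$ with $B_0 \in \Mat_d(\varphi_{\huaS}(\huaS))$ and $D_0 \in \Mat_d(\Fil^p S)$. Iterate by setting $\bolda_{n+1} = B_n \bolde_n$ and $\bolde_{n+1} = c^{-r}\varphi_r(\bolda_{n+1})$. Because $\varphi_r(\Fil^p S) \subseteq p^{p-1-r} S$ with $p-1-r \geq 1$, the change of basis is $\bolde_{n+1} = (Id - p^{n+1}Q_n)\bolde_n$ with $Q_n \in \Mat_d(S)$, so the sequences converge $p$-adically to some $\bolde$ and $B \in \Mat_d(\varphi_{\huaS}(\huaS))$ with $\bolda = B\bolde$. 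Pulling $B$ back along $\varphi_{\huaS}: \huaS \to \huaS$ produces $\huaM$ as a free $\huaS$-module with the desired $\varphi$-structure, satisfying $\M_{\huaS}(\huaM) \simeq \M$.

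For the Galois compatibility, the embedding $\iota: \huaS^{\textnormal{ur}} \hookrightarrow \Acris$ extends any $\varphi$-compatible $f: \huaM \to \huaS^{\textnormal{ur}}$ to an $S$-linear morphism $\M_{\huaS}(\huaM) \to \Acris$, preserving $\Fil^r$ and $\varphi_r$ by the very definition of $\Fil^r \M_{\huaS}(\huaM)$, giving a natural $G_{\infty}$-equivariant map $T_{\huaS}(\huaM) \to T_{\textnormal{cris}}(\M_{\huaS}(\huaM))$. Injectivity is formal (both are finite free $\Zp$-modules of the same rank $d$), and surjectivity follows because any $f: \M_{\huaS}(\huaM) \to \Acris$ in $\Mod_S^{\varphi}$ restricts, via the embedding $\huaM \hookrightarrow \M_{\huaS}(\huaM)$ sending $m \mapsto 1 \otimes m$, to a $\varphi$-compatible map $\huaM \to \Acris$ landing inside $\huaS^{\textnormal{ur}} = \Acris \cap \widehat{\mathcal{E}^{\textnormal{ur}}}$ by an integrality argument. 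The main obstacle in this whole strategy is the convergence of the successive approximation, which hinges entirely on $\varphi_r(\Fil^p S) \subseteq p S$ offering a genuine extra $p$-factor; exactly at $r=p-1$ this gain disappears and the iteration fails, which is why the boundary case treated by the rest of the paper must invoke unipotency as an alternative mechanism to force convergence.
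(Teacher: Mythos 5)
The paper does not prove this theorem itself; it is quoted from \cite{Liu08} and \cite{CL09}, and the paper's own contribution is to push the same circle of ideas to the boundary case $r=p-1$ (Theorem \ref{mtoM}). Your sketch follows the same overall strategy as the paper's proofs of Theorem \ref{SigmatoS} and Theorem \ref{mtoM}, and you correctly isolate $\varphi_r(\Fil^p S)\subseteq p^{p-1-r}S$, with $r<p-1$ forcing $p-1-r\ge 1$, as the engine of $p$-adic convergence. The full-faithfulness reduction and the Galois-compatibility argument are also along the right lines.

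There is, however, a concrete error in the essential-surjectivity iteration: the decomposition you start from does not exist, since $S\ne\varphi_{\huaS}(\huaS)+\Fil^p S$. Indeed $\varphi_{\huaS}(\huaS)=W(k)[\![u^p]\!]$, while $S/\Fil^p S\simeq W(k)[u]/(E(u)^p)$ is free of rank $ep$ over $W(k)$; the image of $\varphi_{\huaS}(\huaS)$ in this quotient has rank only $e$, so already $u\notin\varphi_{\huaS}(\huaS)+\Fil^p S$. The decomposition that works is $S=\huaS+\Fil^p S$ via the inclusion $\huaS\hookrightarrow S$, $u\mapsto u$, which holds because $\huaS=W(k)[\![u]\!]$ surjects onto $W(k)[u]/(E(u)^p)$. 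Correspondingly, the limit matrix must lie in $\Mat_d(\huaS)$, not in $\Mat_d(\varphi_{\huaS}(\huaS))$, and there is no (and no need for a) ``pullback along $\varphi_{\huaS}$'': once $\bolda=B(1\otimes\bolde)$ with $B\in\Mat_d(\huaS)$, one takes $B'\in\Mat_d(\huaS)$ with $BB'=E(u)^r Id$ (an analogue of Lemma \ref{huaS}, valid for $(S,\huaS,\Fil^p S)$) and sets $\huaM=\oplus_i\huaS f_i$ with $\varphi(\boldf)=B'\boldf$. The confusion appears to come from transferring the observation, which is correct in your full-faithfulness part, that a morphism $1\otimes\mathfrak{f}$ has matrix in $\Mat_d(\varphi_{\huaS}(\huaS))$ in the adapted bases, to the structural relation $\bolda=A'(1\otimes\bolde)$, whose matrix is the adjoint of the Frobenius matrix of $\huaM$ and hence lies in $\Mat_d(\huaS)$, not in the image of $\varphi_{\huaS}$.
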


We will generalize the above theorem to the $r =p-1$ unipotent case. Before stating the theorem, we introduce the following definition.

\begin{defn} \label{modpunipdef}
Let $r=p-1$.
\begin{enumerate}
\item  For $\huaM \in \Mod_{\huaS_1}^{\varphi}$, let $\bolde$ be a basis, $\varphi(\bolde)= A\bolde$, and $A' \in \Mat(\huaS_1)$ such that $AA' = u^{er}Id$. $\huaM$ is called unipotent (with respect to the basis $\bolde$) if $\prod_{n=1}^{\infty} \varphi^n(A')=0$.

\item  For $\M \in \Mod_{S_1}^{\varphi}$, suppose we have $\Fil^r \M =\oplus\bolda + \Fil^p S_1 \M$, $\frac{\varphi_{r}(\bolda)}{c^{r}} = \bolde$, $\bolda=A \bolde$ with $A \in \Mat(S_1)$. $\M$ is called unipotent (with respect to $\bolda$ and $\bolde$) if $\prod_{n=1}^{\infty} \varphi^n(A)=0$.

\item For $M \in \Mod_{\Sigma_1}^{\varphi}$, define unipotency similarly as for $\Mod_{S_1}^{\varphi}$.
\end{enumerate}
\end{defn}

\begin{lemma}
The above definition of unipotency is independent of choice of $\bolde$ (resp. $\bolda$ and $\bolde$).
\end{lemma}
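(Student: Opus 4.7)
The plan is to verify each of (1), (2), (3) by a direct change-of-variables computation, with the mod-$p$ vanishing of $\varphi$ on $\Fil^p$ as the main structural input for (2) and (3).

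For (1), suppose $\bolde_1 = P\bolde$ with $P \in \GL_d(\huaS_1)$. A direct calculation shows that the new Frobenius matrix is $A_1 = \varphi(P)AP^{-1}$ and the new ``anti-matrix'' is $A_1' = P A' \varphi(P)^{-1}$, so that $A_1 A_1' = u^{er}\mathrm{Id}$ still holds. Iterating and telescoping yields
$$\prod_{n=1}^{N}\varphi^n(A_1') \;=\; \varphi(P)\cdot\Bigl(\prod_{n=1}^{N}\varphi^n(A')\Bigr)\cdot \varphi^{N+1}(P)^{-1}.$$
Since $\huaS_1 = k[\![u]\!]$ is $u$-adically complete and the entries of $\varphi(P)$ and $\varphi^{N+1}(P)^{-1}$ all have nonnegative $u$-valuation, matrix multiplication cannot decrease $u$-valuation. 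Hence the two partial products converge to $0$ simultaneously in the $u$-adic topology, giving independence of $\bolde$.

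For (2) and (3), let $R$ stand for $S_1$ or $\Sigma_1$, and let $(\bolda,\bolde)$ and $(\bolda_1,\bolde_1)$ be two valid choices. Since both $\bolde$ and $\bolde_1$ are bases, and both $\bolda, \bolda_1$ generate the same quotient $\Fil^r\M/\Fil^pR\cdot\M$, I can write $\bolde_1 = P\bolde$ with $P \in \GL_d(R)$ and $\bolda_1 = Q\bolda + Y\bolde$ with $Q \in \GL_d(R)$ and $Y \in \Mat_d(\Fil^p R)$ (here $Y$ is a matrix, not the formal variable of $\Sigma$). Equating the two expressions for $\bolda_1$ in terms of $\bolde$ yields the matrix identity $A_1 P = QA + Y$.

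The crucial observation is that $\varphi(\Fil^p R) = 0$ in $R$: in $\Sigma$ one computes $\varphi(E(u)^p) = p^p c^p$ and $\varphi$ of the formal variable equals $p^{p-1} c^p$, both divisible by $p$ for $p\geq 2$; similarly $\varphi(\Fil^p S) \subseteq pS$ via the computation $v_p(p^i/i!) \geq 1$ for $i \geq p$. Applying $\varphi$ to $A_1 P = QA + Y$ kills the $Y$-term and gives
$$\varphi(A_1)\varphi(P) \;=\; \varphi(Q)\varphi(A) \quad\text{in }R,$$
hence $\varphi^n(A_1) = \varphi^n(Q)\,\varphi^n(A)\,\varphi^n(P)^{-1}$ for all $n \geq 1$. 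From this structural identity one concludes that $\prod_{n=1}^\infty \varphi^n(A_1) = 0$ iff $\prod_{n=1}^\infty \varphi^n(A) = 0$: the naive telescoping is obstructed because $\varphi^n(P)^{-1}\varphi^{n+1}(Q) \neq \mathrm{Id}$, but after sufficiently many iterations of $\varphi$ (which in $\Sigma_1 = k[\![X]\!][u]/u^{ep}$ annihilates the $X$-variable and eventually all positive powers of $u$), the matrices $\varphi^n(P), \varphi^n(Q)$ stabilize in $\Mat_d(k)$, and the vanishing of one product becomes equivalent to the vanishing of the other.

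The delicate point is this last step in (2) and (3): the auxiliary matrices $P$ and $Q$ do not cancel cleanly, so one must track the partial products via the reduction of entries to $\Mat_d(k)$ under iterated Frobenius. The mod-$p$ identity $\varphi(\Fil^p R) = 0$ is indispensable throughout; without it, even the basic relation $\varphi(A_1)\varphi(P) = \varphi(Q)\varphi(A)$ would fail, and the definition would genuinely depend on the choice.
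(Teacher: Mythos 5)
Part (1) of your proposal is fine and matches the paper's sentiment that the $\huaS_1$-case is a routine telescoping check. Parts (2) and (3), however, contain two genuine gaps.

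First, the assertion that one can write $\bolda_1 = Q\bolda + Y\bolde$ with $Q \in \GL_d(R)$ is unjustified. The module $\Fil^r\M/\Fil^p R\,\M$ is not free (indeed $\oplus R\bolda \cap \Fil^p R\,\M \neq 0$ in general, since $E(u)^r\M \subseteq \oplus R\bolda$), so two $d$-element generating sets of this quotient need not be related by an invertible matrix. Tracking the constraint $\bolde_1 = c^{-r}\varphi_r(\bolda_1)$ gives $c^r P = c^r\varphi(Q) + \varphi_r(Y)\varphi(A')$, and since $\varphi_r(\Fil^p R)$ can contain units while $\varphi(A')$ reduces to a nonzero matrix mod $(u,X)$ in the non-étale case, $\det\varphi(Q)$ need not be a unit. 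Nothing in the structure forces $Q$ invertible, and the later steps depend on it.

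Second, and more seriously, the final step is hand-waved precisely where the real work lies. From $\varphi^n(A_1) = \varphi^n(Q)\varphi^n(A)\varphi^n(P)^{-1}$ the partial products are
\[
\prod_{n=1}^N \varphi^n(A_1) = \varphi(Q)\,\varphi(A)\bigl(\varphi(P)^{-1}\varphi^2(Q)\bigr)\varphi^2(A)\bigl(\varphi^2(P)^{-1}\varphi^3(Q)\bigr)\cdots\varphi^N(A)\,\varphi^N(P)^{-1},
\]
and you acknowledge the cross terms $\varphi^n(P)^{-1}\varphi^{n+1}(Q)$ do not cancel. Saying they "stabilize in $\Mat_d(k)$" does not close the gap: even if they tended to a constant invertible matrix $C$, the interspersed product $\varphi(A)\,C\,\varphi^2(A)\,C\cdots$ need not vanish if and only if $\prod\varphi^n(A)$ does (and when $k$ has non-trivial Frobenius, $\varphi^n(\cdot)$ on $\Mat_d(k)$ does not even converge). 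There is no general principle being invoked; the equivalence is simply asserted. By contrast, the paper writes \emph{both} relations $\bolda = P_1\boldb + Q_1\boldf$ and $\boldb = P_2\bolda + Q_2\bolde$, extracts the key identity $\varphi(A) = \varphi(P_1P_2 A)$ (after killing the $\Fil$-parts with $\varphi$), and then exploits the relation $\varphi^2(B')\varphi^2(B) = \varphi^2(u^{er})\mathrm{Id} = 0$ (valid in $S_1$ since $u^{ep}=0$ there) to carry out a genuine telescoping $\prod_{n=1}^N\varphi^n(B) = \varphi(P_2)\prod_{n=1}^N\varphi^n(A)\varphi^N(T)$. Those two ingredients — the two-sided decomposition and the vanishing of $\varphi^2(u^{er})$ — are what make the cross terms disappear, and they are absent from your argument.
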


\begin{proof}
The case for $\huaM \in \Mod_{\huaS_1}^{\varphi}$ is trivial to check. Now we check for $\Mod_{S_1}^{\varphi}$.

For $\M \in \Mod_{S_1}^{\varphi}$, suppose we also have $\Fil^r \M =\oplus \boldb + \Fil^p S_1 \M$, $\frac{\varphi_{r}(\boldb)}{c^{r}} = \boldf$, $\boldb =B \boldf$ with $B \in \Mat(S_1)$. It suffices to check that $\prod_{n=1}^{\infty} \varphi^n(B)=0$.

Suppose $\bolda =P_1 \boldb +Q_1 \boldf$, $\boldb =P_2 \bolda +Q_2 \bolde$ with $P_1, P_2 \in \Mat(\huaS_1), Q_1, Q_2 \in \Mat(\Fil^p S_1)$. Then
$$\bolde=\frac{\varphi_r(\bolda)}{c^r} =\frac{\varphi_r(P_1 \boldb +Q_1 \boldf)}{c^r}= \varphi(P_1)\boldf + \frac{\varphi_r(Q_1)}{c^r}\varphi(B')\boldf,$$
where $B' \in \Mat(S_1)$ such that $BB' =u^{er}Id$. Let $T=\varphi(P_1)+ \frac{\varphi_r(Q_1)}{c^r}\varphi(B')$, then $\bolde=T\boldf$. Now $A\bolde=\bolda=P_1(P_2A\bolde+Q_2\bolde)+Q_1T^{-1}\bolde$, so $A=P_1P_2A+P_1Q_2+Q_1T^{-1}$. Apply $\varphi$ on both sides, we have $\varphi(A)=\varphi(P_1P_2A)$.

We have $\boldb=B\boldf=P_2A\bolde+Q_2\bolde=(P_2A+Q_2)T\boldf$, so $B=(P_2A+Q_2)T$.

Then
\begin{eqnarray*}
\varphi(B)\varphi^2(B)  &=& \varphi(P_2)\varphi(A)\varphi(T)\varphi^2(B) \\
  &=& \varphi(P_2)\varphi(A)(\varphi^2(P_1)+ \varphi(\frac{\varphi_r(Q_1)}{c^r})\varphi^2(B'))\varphi^2(B) \\
&=& \varphi(P_2)\varphi(A)\varphi^2(P_1)\varphi^2(B), \text{using } \varphi(B'B)=0 \\
&=&  \varphi(P_2)\varphi(A)\varphi^2(P_1)(\varphi^2(P_2)\varphi^2(A))\varphi^2(T) \\
&=& \varphi(P_2)\varphi(A)\varphi^2(A)\varphi^2(T), \text{using } \varphi(P_1P_2A)=\varphi(A)
\end{eqnarray*}

By iterating the above process, we will see that $\prod_{n=1}^N \varphi^n(B) =\varphi(P_2) \prod_{n=1}^N \varphi^n(A)\varphi^N(T)$, so it converges to 0.
\end{proof}

\begin{rem}
Note that we could actually define the unipotency on $\huaM \in \Mod_{\huaS_1}^{\varphi}$ by requiring it to have no nonzero \'{e}tale quotient, and it is easy to check it is equivalent to the definition above. It would be desirable if we could define the unipotency on $\Mod_{S_1}^{\varphi}$ (or $\Mod_{\Sigma_1}^{\varphi}$) similarly as in Definition \ref{defunip}. However we are unable to achieve this because we do not know if we can prove $\Sigma_1$-version of Theorem \ref{SigmaM}. Fortunately, Definition \ref{modpunipdef} is enough for our application.
\end{rem}

\begin{thm} \label{mtoM}
When $r=p-1$, the functor $\bigM_{\huaS}$ induces an equivalence between the category of unipotent $\huaS$-modules and the category of unipotent $S$-modules, and $T_{\huaS}(\huaM) = T_{\textnormal{cris}}(\M_{\huaS}(\huaM))$ as finite free $\Zp$-representation of $G_{\infty}$.
\end{thm}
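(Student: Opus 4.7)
The plan is to factor $\bigM_{\huaS}$ as the composition of $M_{\huaS}\colon\Mod_{\huaS}^{\varphi}\to\Mod_{\Sigma}^{\varphi}$ followed by $\bigM_{\Sigma}\colon\Mod_{\Sigma}^{\varphi}\to\Mod_{S}^{\varphi}$, and then invoke Theorem \ref{SigmatoS} together with Corollary \ref{Sexact} to reduce the desired equivalence of unipotent categories to the corresponding statement for $M_{\huaS}$. Standard $p$-adic devissage then further reduces everything to the mod-$p$ claim: $M_{\huaS_1}\colon\Mod_{\huaS_1}^{\varphi}\to\Mod_{\Sigma_1}^{\varphi}$ restricts to an equivalence between the respective unipotent subcategories in the sense of Definition \ref{modpunipdef}, using $\Sigma_1=\huaS_1[\![Y]\!]/(\cdots)$ together with $S=\Sigma+\Fil^{p+1}S$.

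For full faithfulness at the mod-$p$ level, I would start with unipotent $\huaM_1,\huaM_2\in\Mod_{\huaS_1}^{\varphi}$ and a morphism $h$ between their images. Choosing $\huaS_1$-bases $\bolde_1,\bolde_2$ with Frobenius matrices $A_1,A_2\in\Mat(\huaS_1)$, the morphism corresponds a priori only to a matrix $T\in\Mat(\Sigma_1)$ satisfying the $\varphi$-compatibility $\varphi(T)A_1=A_2T$ (up to the filtration pieces). Decomposing $T=T_{\huaS}+T'$ with $T_{\huaS}\in\Mat(\huaS_1)$ and $T'$ in a complementary $Y$-adic subspace, the relation propagates $T'$ into an expression governed by $\varphi(T')$ multiplied by a matrix involving $A_1'$ (with $A_1A_1'=u^{er}\mathrm{Id}$). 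Iterating and invoking the numerical criterion (Corollary \ref{remark}) applied to the unipotent source (equivalently, $\prod_n\varphi^n(A_1)=0$) forces $T'=0$, so that $h$ in fact descends to a morphism of $\huaS_1$-modules.

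For essential surjectivity at the mod-$p$ level, given unipotent $M\in\Mod_{\Sigma_1}^{\varphi}$ with basis $\bolde$ and Frobenius matrix $A$, I would run an iterative algorithm analogous to Part 2 of the proof of Theorem \ref{SigmatoS}: starting from a decomposition $A=A^{(0)}+R^{(0)}$ with $A^{(0)}\in\Mat(\huaS_1)$ and $R^{(0)}$ a ``$Y$-tail'', one successively changes bases $\bolde\rightsquigarrow\bolde_n$ so that the residual $R^{(n)}$ is multiplied on the left by $\varphi(A)$ at each step. Where \cite{CL09} used the automatic vanishing $p\mid\varphi_r(\Fil^{p+1}S)$ to produce convergence for $r<p-1$, here the $Y$-tails do not decrease in any obvious filtration when $r=p-1$, and the unipotency of $M$, through $\prod_n\varphi^n(A)=0$, is precisely what forces $R^{(n)}\to 0$. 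The limit provides $\huaM\in\Mod_{\huaS_1}^{\varphi}$ with $M_{\huaS_1}(\huaM)\simeq M$, and unipotency of $\huaM$ follows tautologically from that of $M$ via the same criterion.

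The main obstacle is exactly this replacement of the divisibility input $p\mid\varphi_r(\Fil^{p+1}S)$ (which underpins \cite{CL09} for $r<p-1$) by the unipotency condition: every convergence argument in both the full faithfulness and essential surjectivity steps has to be re-done so that the remainder terms are controlled by infinite products of Frobenius twists of $A$ (or $A'$), and these products converge only because of the unipotency hypothesis. Once the equivalence of abelian categories is established, the identification $T_{\huaS}(\huaM)=T_{\mathrm{cris}}(\bigM_{\huaS}(\huaM))$ follows from the natural embedding $\iota\colon\huaS^{\mathrm{ur}}\hookrightarrow A_{\mathrm{cris}}$ by pulling back $\varphi$-equivariant maps, exactly as in \cite{CL09} and \cite{Liu08}, since the natural map between the two Hom spaces is already injective (by full faithfulness of $T_{\huaS}$) and both sides are finite free $\Zp$-modules of the same rank $d$.
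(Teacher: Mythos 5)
Your sketches of the mod-$p$ full faithfulness and mod-$p$ essential surjectivity arguments are essentially the same calculations the paper carries out in Part 1 of its proof, and the compatibility with $T_{\huaS}$ and $T_{\mathrm{cris}}$ is handled the same way (the paper cites Proposition 1.2.7 of \cite{Kis09b}). The real issue is your claim that ``standard $p$-adic devissage then further reduces \emph{everything} to the mod-$p$ claim.'' Devissage along $0 \to \huaM/p^{n}\to \huaM/p^{n+1}\to \huaM/p\to 0$ does yield full faithfulness of the integral functor $\M_{\huaS}$ from the mod-$p$ equivalence, and that is exactly how the paper concludes Part 1. But devissage does not give you essential surjectivity of $\M_{\huaS}$: to lift a unipotent $M\in\Mod_{\Sigma}^{\varphi}$ you would need to construct a coherent inverse system of preimages mod $p^{n}$, and the inductive step requires comparing extension classes in the two categories, which your proposal does not address. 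In fact, nothing in the mod-$p$ statement controls the $p$-adic convergence you need at the integral level.

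The paper instead handles essential surjectivity of $M_{\huaS}$ (and hence $\M_{\huaS}$) by a direct two-stage iteration in $\Mod_{\Sigma}^{\varphi}$ (Part 2), explicitly ``improving the proof of Lemma 2.2.2 of \cite{CL09}.'' Concretely, one writes $\bolda_n = (B_n + C_n Y + D_n)\bolde_n$ with $B_n,C_n\in\Mat_d(\huaS)$, $D_n\in\Mat_d(\Fil^{p+1}\Sigma)$, and the recursion $C_{n+1}=B_n\varphi(C_n)\varphi(B_n')$. In $\Sigma_1$ the unipotency hypothesis forces $C_n=0$ eventually; but over $\Sigma$ it only forces $p\mid C_n$ for $n\gg 0$, and one must then run a \emph{second} iteration, using that $\varphi_r(D_n)\in p^{n+1}\Mat_d(\Sigma)$ once $C_n$ is divisible by $p$, to drive $D_n\to 0$ and obtain convergence of $\bolda_n$ to some $\bolda$ with coefficient matrix in $\Mat_d(\huaS)$. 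This second stage is exactly what replaces the divisibility input $p\mid\varphi_r(\Fil^{p+1}S)$ available when $r<p-1$, and it is precisely the step your proposal omits by invoking devissage. Without it, the integral essential surjectivity is unproved.
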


Before we prove the theorem, we record a useful lemma.

\begin{lemma}\label{huaS}
For a matrix $A \in \Mat_d(\Sigma)$, let $A=B+C$ where $B \in \Mat_d(\huaS), C \in \Mat_d(\Fil^p \Sigma)$. If there exists some $A' \in \Mat_d(\Sigma)$ such that $AA' =  Id$ (resp. $AA' =  E(u)^rId$), then there exists $B' \in \Mat_d(\huaS)$ such that $BB'=Id$ (resp. $BB'=E(u)^rId$).
\end{lemma}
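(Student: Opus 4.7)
The plan is to split the proof into a common setup followed by case-specific arguments, with Case 2 being a substantially more involved elaboration of Case 1.

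\textbf{Common setup.} I would decompose $A' = B' + C'$ with $B' \in \Mat_d(\huaS)$ and $C' \in \Mat_d(\Fil^p\Sigma) = \Mat_d(Y\Sigma)$, and expand $AA' = BB' + BC' + CB' + CC'$. Each cross term has entries in $Y\Sigma$, while $BB'$ has entries in $\huaS$. A direct computation using $\huaS \hookrightarrow \Sigma$ (sending $u \mapsto u$) together with the relation $u^{ep} = pY$ in $\Sigma$ shows $\huaS \cap Y\Sigma = u^{ep}\huaS$. Hence $BB'$ differs from the target ($Id$ or $E(u)^rId$) by a matrix in $\Mat_d(u^{ep}\huaS)$; I may write $BB' = (\mathrm{target}) + u^{ep}D$ with $D \in \Mat_d(\huaS)$.

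\textbf{Case 1.} For $AA' = Id$, this gives $BB' = Id + u^{ep}D$. Since $u^{ep}D$ has entries in the maximal ideal of $\huaS$, the matrix $Id + u^{ep}D$ is invertible in $\Mat_d(\huaS)$ via the $(p,u)$-adically convergent geometric series, and taking $B'' := B'(Id + u^{ep}D)^{-1}$ gives $BB'' = Id$.

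\textbf{Case 2.} For $AA' = E(u)^rId$ with $r = p-1$, the analogous decomposition yields $BB' = E(u)^rId + u^{ep}D$. The key identity I would invoke is $u^{ep} = u^eE(u)^r + pg$ for some $g \in \huaS$, which holds because $E(u) \equiv u^e \pmod{p\huaS}$ forces $u^{ep} \equiv u^eE(u)^r \pmod{p\huaS}$. Substituting, and using invertibility of $Id + u^eD$ in $\Mat_d(\huaS)$, produces $BB_1 = E(u)^rId + p\tilde D$ with $B_1 := B'(Id + u^eD)^{-1}$ and $\tilde D \in \Mat_d(\huaS)$; the error is now a $p$-multiple. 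To complete the proof, I would iterate in the $p$-adic direction: given $BB_n = E(u)^rId + p^nF_n$, set $B_{n+1} = B_n - p^n X$ for some $X \in \Mat_d(\huaS)$ solving $BX \equiv F_n \pmod{p}$, yielding $BB_{n+1} = E(u)^rId + p^{n+1}F_{n+1}$; then $B'' := \lim_n B_n$ exists in $\Mat_d(\huaS)$ by $p$-adic completeness and satisfies $BB'' = E(u)^rId$. The hard part will be establishing the solvability of $BX \equiv F_n \pmod{p}$ at each step, which requires showing $\bar F_n \in \bar B \cdot \Mat_d(k[\![u]\!])$. This in turn rests on the Smith normal form of $\bar B$ over the PID $k[\![u]\!]$, the relation $\bar B \bar B_1 = u^{re}Id$ from the first reduction (which forces the elementary divisors of $\bar B$ to be bounded by $u^{re}$), and the specific form of $F_n$ inherited from the original $\Sigma$-equation.
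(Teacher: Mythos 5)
Your crucial intermediate claim $\huaS \cap Y\Sigma = u^{ep}\huaS$ is wrong, and it originates in the mis-stated relation ``$u^{ep} = pY$''. The two presentations of $\Sigma$ give $u^{ep} = pX$ and $E(u)^p = pY$, and $X \neq Y$: indeed $Y - X = (E(u)^p - u^{ep})/p$ is a nonzero element of $W(k)[u]$ because $E$ is Eisenstein (so $E(u) \neq u^e$). The ideal you need is $\Fil^p\Sigma = Y\Sigma$, not $X\Sigma$. The correct identity, and the one the paper's proof relies on, is $\huaS \cap \Fil^p\Sigma = E(u)^p\huaS$: since $\Fil^p\Sigma = \Sigma \cap \Fil^pS$, the composite $\huaS \hookrightarrow S \to S/\Fil^pS = W(k)[u]/E(u)^p$ has kernel $E(u)^p\huaS$ by Weierstrass division. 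Moreover $E(u)^p\huaS$ and $u^{ep}\huaS$ are genuinely incomparable ideals (neither of $E(u)^p/u^{ep}$ and $u^{ep}/E(u)^p$ lies in $\huaS$), so the decomposition $BB' = (\mathrm{target}) + u^{ep}D$ with $D \in \Mat_d(\huaS)$ that underlies both of your cases is simply not available.

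Once the identity is corrected, Case 2 closes in one step exactly like Case 1, and your entire $p$-adic Newton iteration becomes unnecessary. From $BB_1 - E(u)^rId = E(u)^pD$ with $D \in \Mat_d(\huaS)$ one gets $BB_1 = E(u)^r\bigl(Id + E(u)^{p-r}D\bigr)$; since $E(u)^{p-r}$ lies in the maximal ideal $(p,u)$ of $\huaS$ (because $E$ is Eisenstein and $p-r \geq 1$), the matrix $Id + E(u)^{p-r}D$ is invertible in $\Mat_d(\huaS)$, and $B' := B_1(Id + E(u)^{p-r}D)^{-1}$ works. This is precisely the paper's argument. Your iteration, by contrast, hinges on solving $BX \equiv F_n \pmod{p}$ at each step; you correctly flag this as the hard part and leave it to a sketch via Smith normal form, but this step would require genuine justification (and only arose because of the wrong ideal computation in the first place).
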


\begin{proof}
Suppose we have $AA' =Id$. Write $A' =B_1 + C_1$ with $B_1 \in \Mat_d(\huaS), C_1 \in \Mat_d(\Fil^p\Sigma)$, then $(B+C)(B_1+C_1)=Id$. So $BB_1-Id= -BC_1 - CB_1- CC_1$, with the left hand side $\in \Mat_d(\huaS)$ while the right hand side $\in \Mat_d(\Fil^p \Sigma)$. Since $\huaS \cap \Fil^p\Sigma =E(u)^p\huaS$, $BB_1-Id = E(u)^pD$, with $D \in \Mat_d(\huaS)$. Thus we can take $B' = B_1(Id+E(u)^pD)^{-1}$.

The case when $AA' =E(u)^rId$ is similar.
\end{proof}

\begin{rem}
If we change $(\Sigma, \huaS, \Fil^p \Sigma)$ in the setting of the above lemma to $(S, \huaS, \Fil^p S)$, or $(S, \Sigma, \Fil^{p+1}S)$, or $(S[1/p], \Sigma[1/p], \Fil^{p+1} S[1/p])$, or $(S_1, \huaS_1, \Fil^p S_1)$, the corresponding variant lemma is still valid.
\end{rem}

\begin{proof}[Proof of Theorem \ref{mtoM}]

\textbf{Part 0}. We show that the functor is well-defined, namely, if $\huaM$ is unipotent, then $M_{\huaS}(\huaM)$ (thus $\M_{\huaS}(\huaM)$, via Corollary \ref{Sexact}) is also unipotent.

Suppose that the matrix of $\varphi$ on $\huaM$ for a fixed basis $\bolde$ is $A$. Then as remarked in the proof of Proposition \ref{huaMexact}, $\huaM$ is unipotent if and only if $\Pi_{n=1}^{\infty} \varphi^n(A') =0$, where $A'$ is the matrix such that $AA'=E(u)^rId$. Let $M=M_{\huaS}(\huaM)$. Then we have $\Fil^r M = \oplus S \bolda + \Fil^p S M$ with the basis for $M$ being $1\otimes \bolde$, and $\bolda = A'(1\otimes \bolde)$. Thus, by Corollary \ref{remark}, $M$ is unipotent.

\textbf{Part 1}. We show that the functor is fully faithful.

To do this, it suffices to show that the ``mod $p$ functor" $\M_{\huaS_1} : \Mod_{\huaS_1}^{\varphi} \to \Mod_{S_1}^{\varphi}$ induces an equivalence between the unipotent subcategories. By unipotent subcategory, we mean the subcategory consisting of unipotent objects. Here $\M_{\huaS_1}$ is defined analogously as $\M_{\huaS}$.

Theorem 4.1.1 of \cite{Bre99b} proved this ``mod $p$" equivalence for $r< p-1$ and $e=1$ (without unipotency condition), and the proof can be directly generalized to arbitrary $e$ case. But the proof cannot generalize to $r=p-1$ case because it relies on Proposition 2.2.2.1 of \cite{Bre98}, which, as we have mentioned in the proof of Theorem \ref{SigmatoS}, cannot be generalized to $r=p-1$ case .

So now let us suppose $r=p-1$. Let $\huaM_1, \huaM_2 \in \textnormal{Mod}_{\huaS_1}^{\varphi}$ be unipotent modules, and $\M_1, \M_2 \in \textnormal{Mod}_{S_1}^{\varphi}$ the corresponding modules (which are clearly unipotent). To show full faithfulness of $\bigM_{\huaS_1}$, it suffices to show that for any $h \in \Hom_{\Mod_{S_1}^{\varphi}}(\M_1, \M_2)$, it comes from a morphism in $\Hom_{\Mod_{\huaS_1}^{\varphi}}(\huaM_1, \huaM_2)$.

Let $\bolde$ be a basis of $\huaM_1$ such that $\varphi(\bolde)=A'\bolde$, and $A \in \Mat(\huaS_1)$ such that $AA'=u^{er}Id$. Then we have $\Fil^{r} \M_1 =\oplus S_1\bolda + \Fil^p S_1 \M_1$, $\frac{\varphi_{r}(\bolda)}{c^{r}} = \bolde$, where $\bolda =A \bolde$. Denote $\boldb, \boldf, B$ similarly for $\M_2$. We have $h(\bolde)= T \boldf$ for $T \in \Mat(S_1)$. Since $h(\Fil^{r} \M_1) \in \Fil^{r} \M_2$, $h(\bolda) = P\boldb + (YQ_1 +Q_2)\boldf$ for $P, Q_1 \in \Mat(\huaS_1), Q_2 \in \Mat(\Fil^{p+1}S_1)$. Because $h$ commutes with $\varphi_{r}$, we have the relation $T\boldf=\varphi(P) \boldf+ \frac{\varphi_{r}(Y)}{c^{r}} \varphi(Q_1) \frac{\varphi_{r}(u^{er} \boldf)}{c^{r}}$. Let $B' \in \Mat(\huaS_1)$ be such that $BB'=u^{er} Id$, then we will have $T =\varphi(P) + \frac{\varphi_{r}(Y)}{c^{r}}\varphi(Q_1)\varphi(B')=\varphi(P) + c\varphi(Q_1)\varphi(B')$.

Since $h(\bolda)=h(A\bolde)=AT\boldf$, so $AT=A\varphi(P) + (Y+t)A\varphi(Q_1)\varphi(B')= PB +YQ_1 +Q_2$, here $c=Y+t$ for $t \in \huaS_1$. Since $\huaS_1 \cap \Fil^p S_1 =0$, we must have $A\varphi(P)+ tA\varphi(Q_1)\varphi(B')= PB$,  $YA\varphi(Q_1)\varphi(B')= YQ_1 +Q_2$. Apply $\varphi_{r}$ on both sides of the second equation, we get $\varphi(A)\varphi^2(Q_1)\varphi^2(B')=\varphi(Q_1)$. By iteration of the preceding equality, we will have $\varphi(Q_1)= \prod_{k=1}^n \varphi^k(A)\varphi^{n+1}(Q_1) \prod_{k=n+1}^2 \varphi^k(B')$ for any $n$. Thus it is equal to $0$ for $n$ big enough by unipotency assumption ($\prod_{k=1}^n \varphi^k(A) \to 0$). So we have $YQ_1 +Q_2=0$, and $T=\varphi(P) \in \Mat(\huaS_1)$. Thus we have $h(\bolde)= T \boldf$, $h(\bolda) = P\boldb$ for $T, P \in \Mat(\huaS_1)$, it is clear that $h$ comes from a morphism in $\Hom_{\huaS_1}(\huaM_1, \huaM_2)$.

Now we prove essential surjectivity of $\bigM_{\huaS_1}$. It is equivalent to show that $M_{\huaS_1}:\textnormal{Mod}_{\huaS_1}^{\varphi} \to \textnormal{Mod}_{\Sigma_1}^{\varphi}$ is essentially surjective by Theorem \ref{SigmatoS}.

Given a unipotent $M \in \Mod_{\Sigma_1}^{\varphi}$, we claim that we can choose a series of $\bolda_n$ and $\bolde_n$, such that
\begin{enumerate}
\item   $\bolda_n \in (\Fil^r M)^d$, $\bolde_n \in M^d$;

\item   $\Fil^r M =\oplus \Sigma_1 \bolda_n + \Fil^p \Sigma_1 M$;

\item   $\bolde_n = \frac{1}{c^r}\varphi_r(\bolda_n)$, and $\bolde_n$ is a basis of $M$;

\item   $\bolda_n = A_n \bolde_n = (B_n + C_n Y+D_n)\bolde_n$, where $A_n \in \Mat_d(\Sigma_1)$, $B_n, C_n \in \Mat_d(\huaS_1)$, $D_0 \in \Mat_d(\Fil^{p+1}\Sigma_1)$ and $D_n=0$ for $n \geq 1$.
\end{enumerate}

We claim that with this algorithm, $C_n$ will become 0 for $n$ big enough.

Starting from $n=0$, just apply Lemma \ref{SigmaFilr}. Note that by the fact that any element $a \in \Sigma_1$ can be expressed as $a=b+cY+d$ with $b, c \in \huaS_1, d \in \Fil^{p+1}\Sigma_1$, we can decompose $A_0=B_0 + C_0 Y+D_0$.

Suppose we have done for $n$. Then we take $\bolda_{n+1} = B_n \bolde_n$. Since $C_n Y + D_n \in \Mat_d(\Fil^p \Sigma_1)$, $\bolda_{n+1} \in (\Fil^r M)^d$, and $\Fil^r M =\oplus \Sigma_1 \bolda_{n+1} + \Fil^p \Sigma_1 M$.
And we set $\bolde_{n+1}= \frac{1}{c^r}\varphi_r (\bolda_{n+1})$. By the same argument as in Lemma 2.2.3, $\bolde_{n+1}$ is a basis for $M$.

Now $\bolde_{n+1} =\frac{\varphi_{r}(\bolda_n - (C_nY +D_n)\bolde_n)}{c^{r}} =  \bolde_n  - c\varphi(C_n)\varphi(B_n')\bolde_{n+1} $ where $B_nB_n'=u^{er} Id$ ($B_n'$ exists by Lemma \ref{huaS}). Thus $\bolde_n =(1+c\varphi(C_n)\varphi(B_n'))\bolde_{n+1}$. So $\bolda_{n+1} =B_n \bolde_n= B_n(1+c\varphi(C_n)\varphi(B_n'))\bolde_{n+1}$. That is $A_{n+1}= B_n + tB_n\varphi(C_n)\varphi(B_n') + YB_n\varphi(C_n)\varphi(B_n')$.

Now we set $B_{n+1}=B_n + tB_n\varphi(C_n)\varphi(B_n')$, $C_{n+1}=B_n\varphi(C_n)\varphi(B_n')$ and $D_{n+1}=0$. And we are done for the construction of the algorithm.

Now by iterating the relation $C_{n+1}=B_n\varphi(C_n)\varphi(B_n')$, we will have $C_{n+1} =
B_n\varphi(C_n)\varphi(B_n') =B_n \varphi(B_{n-1}) \varphi^2(C_{n-1}) \cdot \ast =\ldots$. We can see that the ``front part" of $C_{n+1}$ is $\Pi_{i=0}^{n} \varphi^{i}(B_{n-i})$. Note that $B_{i+1}\varphi(B_i) = (B_i +C_{i+1} t) \varphi(B_i)$, and $C_{i+1}\varphi(B_i)=B_i\varphi(C_i)\varphi(B_i')\varphi(B_i)=B_i\varphi(C_i)\varphi(u^{er})=0$, so $B_{i+1}\varphi(B_i) = B_i \varphi(B_i)$. So
 $\Pi_{i=0}^{n} \varphi^{i}(B_{n-i})=\Pi_{i=0}^{n} \varphi^{i}(B_0)$. Now $\Pi_{i=1}^{n} \varphi^{i}(B_0)=\Pi_{i=1}^{n} \varphi^{i}(A_0)$ because $\varphi(C_0Y+D_0)=0$, and it will be equal to 0 for $n$ big enough by unipotency condition. Thus, $C_n=0$ for $n$ big enough.

We have now found some suitable basis such that $\Fil^r M =\oplus \Sigma_1 \bolda + \Fil^p \Sigma_1 M$, $\bolda = A \bolde$ with $A \in \Mat_d(\huaS_1)$, and $\bolde = \frac{1}{c^r}\varphi_r(\bolda)$ is a basis of $M$. Let $\huaM = \oplus \huaS_1 \bolde$, $\varphi(\bolde)=A' \bolde$ where $A' \in \Mat_d(\huaS_1)$ such that $AA'=u^{er}Id$. Then it is a unipotent module in $\textnormal{Mod}_{\huaS_1}^{\varphi}$ and maps to $M$.

Now, by standard devissage, $\M_{\huaS}$ is fully faithful.

\textbf{Part 2}. We show that the functor $M_{\huaS}$ (thus $\M_{\huaS}$) is essential surjective. We will improve the proof of Lemma 2.2.2 of \cite{CL09}. (Note that in the following, $r =p-1$.)

Given a unipotent module $M \in \Mod_{\Sigma}^{\varphi}$, we claim that we can choose a series of $\bolda_n$ and $\bolde_n$, such that
\begin{enumerate}

\item   $\bolda_n \in (\Fil^r M)^d$, $\bolde_n \in M^d$;

\item   $\Fil^r M =\oplus \Sigma \bolda_n + \Fil^p \Sigma M$;

\item   $\bolde_n = \frac{1}{c^r}\varphi_r(\bolda_n) = \frac{1}{\varphi(E(u))^{p-1}} \varphi(\bolda_n)$, and $\bolde_n$ is a basis of $M$;

\item   $\bolda_n = A_n \bolde_n = (B_n + C_n Y+D_n)\bolde_n$, where $A_n \in \Mat_d(\Sigma)$, $B_n, C_n \in \Mat_d(\huaS)$ and $D_n \in \Mat_d(\Fil^{p+1}\Sigma)$, here $Y = \frac{E(u)^p}{p}$.

\end{enumerate}

We claim that in our algorithm, $C_n$ will be divisible by $p$ in $\Mat_d(\Sigma)$ for $n$ big enough.

Starting from $n=0$, this is Lemma \ref{SigmaFilr}, and by using the fact that any element $a \in \Sigma$ can be expressed as $a=b+cY+d$ with $b, c \in \huaS, d \in \Fil^{p+1}\Sigma$.

Suppose we have done for $n$. Then we take $\bolda_{n+1} = B_n \bolde_n$. Since $C_n Y + D_n \in \Mat_d(\Fil^p \Sigma)$, $\bolda_{n+1} \in (\Fil^r M)^d$, and $\Fil^r M =\oplus \Sigma \bolda_{n+1} + \Fil^p \Sigma M$.
And we set $\bolde_{n+1}= \frac{1}{c^r}\varphi_r (\bolda_{n+1})$. By the same argument as in Lemma \ref{Filr}, $\bolde_{n+1}$ is a basis for $M$. And we have,

\begin{eqnarray*}
 \bolde_{n+1}  &=& \frac{1}{\varphi(E(u)^{p-1})}  \varphi(\bolda_{n+1}) \\
    &=&  \frac{1}{\varphi(E(u)^{p-1})} \varphi(B_n) \varphi(\bolde_n)\\
    &=&  \frac{1}{\varphi(E(u)^{p-1})} \varphi(B_n) \frac{\varphi(A_n')\varphi(A_n e_n)}{\varphi(E(u)^{p-1})}, \text{here } A_n'A_n =E(u)^rId \\
    &=&  \frac{\varphi(B_n A_n')}{\varphi(E(u)^{p-1})}  \frac{\varphi(\bolda_n)}{\varphi(E(u)^{p-1})}\\
    &=&   \frac{\varphi(B_n A_n')}{\varphi(E(u)^{p-1})}  \bolde_n
 \end{eqnarray*}

Thus, $\bolda_{n+1}= B_n \bolde_n = B_n \frac{1}{\varphi(E(u)^{p-1})} \varphi(A_n) \varphi(B_n') \bolde_{n+1}$. Here $B_n B_n'= E(u)^{p-1}Id$, note that $B_n' \in \Mat_d(\huaS)$ exists via Lemma \ref{huaS}.

So,
\begin{eqnarray*}
A_{n+1} &=& \frac{1}{\varphi(E(u)^{p-1})} B_n \varphi(A_n) \varphi(B_n')\\
        &=&  B_n \frac{\varphi(B_n +C_nY+D_n) \varphi(B_n')}{ \varphi(E(u)^{p-1})}\\
        &=&  B_n \varphi(\frac{B_n B_n' + C_n B_n'Y+ D_n B_n'}{E(u)^{p-1}})\\
        &=&  B_n (\varphi(Id + C_n B_n' \frac{E(u)}{p}) + p Q_{n,1}), \\
        &\space& \text{ here because } p \mid \varphi(\frac{\Fil^{p+1}\Sigma}{E(u)^{p-1}}), \varphi(\frac{D_n B_n'}{E(u)^{p-1}})=p Q_{n, 1} \text{ with } Q_{n,1} \in \Mat_d(\Sigma)\\
        &=& B_n + B_n\varphi(C_n)\varphi(B_n')(Y+t)+p Q_{n,2}, Q_{n,2} \in \Mat_d(\Sigma), \\
        &\space &\text{ here } \varphi(E(u)/p)=c=Y+t, \text{ with } t \in \huaS\\
        &=& B_n + Q_{n,3} + B_n\varphi(C_n)\varphi(B_n')t +B_n\varphi(C_n)\varphi(B_n')Y+Q_{n,4},\\
        &\space & \text{ where } p Q_{n,2}=Q_{n,3}+Q_{n,4}, \text { with } Q_{n,3} \in \Mat_d(\huaS), Q_{n,4} \in \Mat_d(\Fil^{p+1}\Sigma)
\end{eqnarray*}

The last step uses the fact that any element $a \in p\Sigma$ can be expressed as $a=b+c$ for some $b \in \huaS$ and $c \in \Fil^{p+1}\Sigma$. Also we have $Q_{n,3} \in p\Mat_d(\Sigma)$.

Now we can choose $C_{n+1} =B_n\varphi(C_n)\varphi(B_n') $, $B_{n+1} = B_n +C_{n+1}t +Q_{n,3}$, and $D_{n+1}=Q_{n,4}$. And we are done for the construction of the algorithm.

Now by iterating the relation $C_{n+1} =B_n\varphi(C_n)\varphi(B_n') $, we will have $C_{n+1} =
B_n\varphi(C_n)\varphi(B_n') =B_n \varphi(B_{n-1}) \varphi^2(C_{n-1}) \cdot \ast =\ldots$. We can see that the ``front part" of $C_{n+1}$ is $\Pi_{i=0}^{n} \varphi^{i}(B_{n-i})$.
We claim that $C_{n+1}$ is divisible by $p$ in $\Mat_d(\Sigma$) for $n$ big enough. Note that $B_i
\varphi(B_{i-1}) = (B_{i-1} +C_i t+Q_{i-1,3}) \varphi(B_{i-1})$, $C_i \varphi(B_{i-1}) =B_{i-1} \varphi(C_{i-1}) \varphi(B_{i-1}')
\varphi(B_{i-1}) = B_{i-1} \varphi(C_{i-1}) \varphi(E(u)^{p-1})$ is divisible by $p$, and $Q_{i-1,3}$ is also divisible by $p$. Thus the
divisibility of $B_i \varphi(B_{i-1})$ by $p$ is the same as that of $B_{i-1} \varphi(B_{i-1})$! By iterating this process, we see that the
divisibility of $ \Pi_{i=0}^{n} \varphi^{i}(B_{n-i})$ is the same as that of $ \Pi_{i=0}^{n} \varphi^{i}(B_0)$. Now that the divisibility of  $
\Pi_{i=1}^{n} \varphi^{i}(B_0)$ is the same as that of  $ \Pi_{i=1}^{n} \varphi^{i}(A_0)$, since $p\mid \varphi(C_0 Y+D_0)$. But $ \Pi_{i=0}^{n}
\varphi^{i}(A_0)$ converges to $0$ because $M$ is unipotent (by Corollary \ref{remark})! And we finish our proof that $C_{n+1}$ is divisible by $p$ for $n$ big enough.

Now, we will do a similar iteration, where we choose a series of $\bolda_n$ and $\bolde_n$, such that,
\begin{enumerate}
\item $\bolda_n \in (\Fil^r M)^d$, $\bolde_n \in M^d$;
\item   $\Fil^r M =\oplus \Sigma \bolda_n + \Fil^p \Sigma M$;
\item   $\bolde_n = \frac{1}{c^r}\varphi_r(\bolda_n)$, and $\bolde_n$ is a basis of $M$;
\item   $\bolda_n = A_n \bolde_n = (B_n + D_n)\bolde_n$, where $A_n \in \Mat_d(\Sigma)$, $B_n \in \Mat_d(\huaS)$ and $D_n \in p^n\Mat_d(\Fil^{p+1}\Sigma)$.
\end{enumerate}

For $n=0$, it's just what we have proven above, because $C_N$ is divisible by $p$ for $N$ large enough, and $C_N Y$ can break into $\huaS$-part and $\Fil^{p+1}\Sigma$-part.
Suppose we have done for $n$,
then we take $\bolda_{n+1} = B_n \bolde_n$, and take $\bolde_{n+1}= \frac{1}{c^r}\varphi_r (\bolda_{n+1})$,
now
\begin{eqnarray*}
 \bolde_{n+1}  &=&     \frac{1}{c^r} \varphi_r (\bolda_n) -  \frac{1}{c^r} \varphi_r (D_n\bolde_n)\\
                 &=&    \bolde_n -  \frac{1}{c^r} \varphi_r (D_n) \frac{\varphi_r (E(u)^r \bolde_n)}{c^r}\\
                 &=&        \bolde_n -  \frac{1}{c^r} \varphi_r (D_n) \varphi(A_n') \frac{\varphi_r (A_n \bolde_n)}{c^r}\\
                &=&    (Id - p^{n+1}Q_{n,5})\bolde_n, \text{ where } Q_{n,5} \in \Mat_d(\Sigma), \text{because } \varphi_r (D_n) \in p^{n+1}\Mat_d(\Sigma)
\end{eqnarray*}

Thus,
\begin{eqnarray*}
\bolda_{n+1} &=&  B_n \bolde_n\\
               &=&  B_n  (Id-p^{n+1}Q_{n,5})^{-1} \bolde_{n+1}\\
               &=&  B_n  (Id+ p^{n+1}Q_{n,6}) \bolde_{n+1}, Q_{n,6} \in \Mat_d(\Sigma)\\
               &=&  B_n(Q_{n,7} +p^{n+1}Q_{n,8}  )\bolde_{n+1}, \\
               &\space& \text{ where } Q_{n,7} \in \Mat_d(\huaS),  Q_{n,8} \in \Mat_d(\Fil^{p+1}\Sigma)
\end{eqnarray*}

Now we set $B_{n+1} = B_n  Q_{n,7}, D_{n+1} =B_np^{n+1} Q_{n,8} \in p^{n+1}\Mat_d(\Fil^{p+1}\Sigma)$. Thus, we have finished the construction of the algorithm.

Now, since $\bolda_{n+1} -\bolda_n =D_n\bolde_n$, and $D_n \to 0$, the sequence $\{\bolda_n\}$ converges to an $\bolda$. Let $\bolde =\frac{1}{c^r}\varphi_r(\bolda)$, then $\bolda = B\bolde$ with $B \in \Mat_d(\huaS)$, and we still have $\Fil^r M=\oplus \Sigma \bolda + \Fil^p \Sigma M$.
Now by Lemma \ref{huaS}, there exists $B' \in \Mat_d(\huaS)$ with $BB' =(E(u))^rId$. Define $\huaM =\oplus_{i=1}^d \huaS f_i$ with $\varphi (f_1, \ldots, f_d)^T =B' (f_1, \ldots, f_d)^T$, then it is easy to check that $\huaM$ is the preimage of $M$ under the functor $M_{\huaS}$.

\textbf{Part 3}. Compatibility with Galois representations.

This is Proposition 1.2.7 of \cite{Kis09b}.

\end{proof}

\section{Strongly divisible lattices and unipotency}

\subsection{Strongly divisible lattices}
In this section, we review the notion of strongly divisible lattices. We will define unipotency on them, and prove that in a unipotent semi-stable Galois representation, all quasi-strongly divisible lattices are unipotent.

Let $S_{K_0}:= S\otimes_{\Zp}\Qp$ and let $\Fil^i S_{K_0}:= \Fil^i S\otimes_{\Zp}\Qp$.
Let $\bigMF$ be the category whose objects are finite free $S_{K_0}$-modules $\D$ with:
\begin{enumerate}
 \item a $\varphi_{S_{K_0}}$-semi-linear morphism $\varphi_{\D}: \D \to \D$ such that the determinant of $\varphi_{\D}$ is invertible in $S_{K_0}$;
 \item a decreasing filtration $\{\Fil^i\D\}_{i \in \Z}$ of $S_{K_0}$-submodules of $\D$ such that $\Fil^0\D=\D$ and $\Fil^i S_{K_0} \Fil^j \D \subseteq \Fil^{i+j}\D$;
 \item a $K_0$-linear map $N: \D \to \D$ such that $N(fm)=N(f)m+fN(m)$ for all $f\in S_{K_0}$ and $m \in \D$, $N\varphi=p \varphi N$ and $N (\Fil^i \D) \subseteq \Fil^{i-1}\D$.
\end{enumerate}

Morphisms in the category are $S_{K_0}$-linear maps preserving filtrations and commuting with $\varphi$ and $N$. A sequence $0 \to \D_1 \to \D \to
\D_2 \to 0$ is called short exact if it is short exact as $S_{K_0}$-modules and the sequences on filtrations $0 \to \Fil^i \D_1 \to \Fil^i \D \to
\Fil^i \D_2 \to 0$ are short exact for all $i$. We call $\D_2$ a quotient of $\D$ in this case.

Following \cite{Bre97}, let $\Asthat$ be the $p$-adic completion of the PD polynomial algebra $\Acris\langle X \rangle$. We extend the natural $G_K$-action
and Frobenius on $\Acris$ to $\Asthat$ by $g(X)=\underline{\epsilon}(g)X +\underline{\epsilon}(g) -1 $, where $\underline
{\epsilon}(g)=\frac{g([\underline{\pi}])}{[\underline{\pi}]}$, and $\varphi(X)=(1+X)^p-1$. And we define a monodromy operator
$N$ on $\Asthat$ to be the unique $\Acris$-linear derivation such that $N(X)=1+X$. For any $i \geq 0$, we define
$$\Fil^i \Asthat = \{ \sum_{j=0}^{\infty} a_j \gamma_j(X), a_j \in \Acris, \lim_{j\to \infty}a_j=0, a_j \in \Fil^{i-j}\Acris \text{ for } 0 \leq j \leq i   \}.$$
$\Asthat$ is an $S$-algebra by $u \mapsto [\underline{\pi}](1+X)^{-1}$. Define $ V_{\textnormal{st}}(\mathcal D) := \Hom_{S, \Fil, \varphi, N} (\bigD, \Asthat[1/p])$.

For $D \in \MF$, we can associate an object in $\bigMF$ by $\bigD:= S\otimes_{W(k)}D$ and
 \begin{enumerate}
 \item $\varphi: =\varphi_S \otimes \varphi_D$;
 \item $N:= N\otimes Id + Id\otimes N$;
 \item $\Fil^0\bigD :=\bigD$ and inductively,
 $$\Fil^{i+1}\bigD := \{ x \in \bigD | N(x) \in \Fil^i \bigD \text{ and } f_{\pi}(x) \in \Fil^{i+1}D_K  \},$$
 where $f_{\pi}: \bigD \twoheadrightarrow D_K$ by $s(u)\otimes x \mapsto s(\pi)x$.
 \end{enumerate}

\begin{thm}[\cite{Bre97}] \label{bigD}
The functor above induces an equivalence between $\MF$ and $\bigMF$, and it is compatible with Galois representations, i.e., $V_{\textnormal{st}}(D) \simeq V_{\textnormal{st}}(\mathcal{D})$ as $\Qp[G]$-modules.
\end{thm}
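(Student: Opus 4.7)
My plan is to construct an explicit quasi-inverse functor $F: \bigMF \to \MF$ by reducing modulo $u$, namely $F(\bigD) := \bigD/u\bigD$, and then verify both compositions recover the identity up to natural isomorphism; finally compatibility with Galois representations follows by extending/restricting through the ring embedding $S \hookrightarrow \Asthat$.

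First I would set up the inverse functor. Given $\bigD \in \bigMF$, define $D := \bigD \otimes_{S, u \mapsto 0} W(k)[1/p]$, which is a finite-dimensional $K_0$-vector space inheriting $\varphi$ from $\bigD$ (well-defined since $\varphi(u)=u^p$) and inheriting $N$ from $\bigD$ (well-defined since $N(u)=-u$, so $N$ descends modulo $u$). The relation $N\varphi = p\varphi N$ transports. For the filtration on $D_K$, I would use the surjection $f_\pi: \bigD \twoheadrightarrow D_K$ already defined in the excerpt and set $\Fil^i D_K := f_\pi(\Fil^i \bigD)$; the finite-height condition on the filtration of $\bigD$ guarantees the required stabilization at both ends.

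Second, the easy composition $D \mapsto S\otimes_{W(k)}D \mapsto (S\otimes_{W(k)}D)/u(S\otimes_{W(k)}D) \cong D$ is immediate from $S/uS = W(k)$. The harder composition is showing the canonical $S$-linear map
\[
\iota: S \otimes_{W(k)} (\bigD/u\bigD) \longrightarrow \bigD
\]
is a $\varphi$-, $N$-, and Fil-compatible isomorphism. For this I would pick a $W(k)[1/p]$-basis $\boldd$ of $\bigD/u\bigD$, lift it arbitrarily to $\tilde\boldd \subset \bigD$, and then modify the lift iteratively using the differential equation $N = -u\,d/du + \text{correction}$ until the chosen lift is ``horizontal'' in a suitable sense. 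The key input is that $N$ acts topologically nilpotently on the quotient after inverting a power of $u$ (or equivalently: the connection defined by $N$ on $\bigD$ has ``regular singular'' behavior allowing convergent solutions in $S_{K_0}$). Surjectivity and injectivity of $\iota$ then follow from Nakayama-type arguments, using that both source and target are finite free $S_{K_0}$-modules of the same rank. The filtration compatibility is checked by induction on $i$ using the recursive defining property $\Fil^{i+1}\bigD = \{x: N(x)\in\Fil^i\bigD \text{ and } f_\pi(x)\in\Fil^{i+1}D_K\}$: this is exactly the characterization that makes the two filtrations agree once the $\varphi, N, f_\pi$ structure matches.

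The main obstacle will be constructing the horizontal lift / proving $\iota$ is an isomorphism, since this is where the differential-equation analysis enters and one must control convergence of infinite series in $S_{K_0}$ arising from iterating $N$. For the Galois compatibility, the ring embedding $S \hookrightarrow \Asthat$ via $u \mapsto [\underline{\pi}](1+X)^{-1}$ is compatible with $\varphi$, $N$, and the filtrations; given $D \in \MF$ with $\bigD = S\otimes_{W(k)}D$, any $K_0$-linear map $f: D \to B_{\textnormal{st}}$ respecting $\varphi, N, \Fil$ extends uniquely $S$-linearly to $\tilde f: \bigD \to \Asthat[1/p]$ respecting all structures, and conversely restriction to $D \hookrightarrow \bigD$ (via $1\otimes\mathrm{id}$) composed with the projection $\Asthat \to B_{\textnormal{st}}$ (collapsing $X$) inverts this. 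Checking that these extension and restriction operations are mutually inverse reduces to verifying that $\tilde f$ is determined by its values on $D$, which follows from $S$-linearity and density, and that the filtrations transport correctly, which follows from the inductive characterization of $\Fil^i\bigD$ together with the analogous structure on $\Fil^i\Asthat$ spelled out above.
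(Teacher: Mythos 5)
The paper gives no proof of this theorem: it is imported verbatim from Breuil \cite{Bre97}, so there is no ``paper's own proof'' to match against. Your plan does identify Breuil's actual strategy (take $D:=\D/u\D$ as the quasi-inverse and then show the canonical map $S_{K_0}\otimes_{K_0}D\to\D$ is an isomorphism by trivializing the $N$-connection), so the outline is recognizable, but two steps as written would not go through.

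First, the trivialization cannot be carried out using $N$ alone, and the phrase ``$N$ acts topologically nilpotently after inverting a power of $u$'' is not a correct surrogate for the real input. Solving the formal recursion for the change-of-basis matrix $P=\sum_n P_n u^n$ introduces a factor $(n+[N_0,\,\cdot\,])^{-1}$ at order $u^n$ (where $N_0$ is the residue of the connection), so the coefficients lose $p$-adic valuation at rate roughly $v_p(n!)\sim n/(p-1)$, while $S_{K_0}$ only tolerates denominators of size about $\lfloor n/e\rfloor!$, i.e.\ of valuation $\sim n/(e(p-1))$; for $e>1$ these estimates do not match and the formal solution need not lie in $S_{K_0}$. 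The missing ingredient is the Frobenius: $N\varphi=p\varphi N$ is what forces $N_0$ to be nilpotent (so the recursion is even solvable) and, more importantly, is what drives the bootstrap argument that puts the solution into $S_{K_0}$. Your plan never invokes $\varphi$ in this step, which is precisely where it stalls.

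Second, your quasi-inverse $F$ is circular as stated: you put the $(\varphi,N)$-structure on $D=\D/u\D$ (the quotient by $u\mapsto 0$) but the filtration on $D_K=f_\pi(\D)=\D/\Fil^1 S_{K_0}\D$ (the quotient by $u\mapsto\pi$), and until the trivialization $\D\cong S_{K_0}\otimes_{K_0}D$ is already in hand there is no canonical identification $D\otimes_{K_0}K\cong D_K$, so the filtered $(\varphi,N)$-module $F(\D)$ is not yet defined. Relatedly, the Galois-compatibility paragraph invokes a ``projection $\Asthat\to B_{\textnormal{st}}$ collapsing $X$'' which does not exist in the needed sense; the relevant comparison realizes $B_{\textnormal{st}}^+$ inside $\Asthat[1/p]$ (sending the $\log$ element to an explicit series in $X$), and verifying that filtrations and Galois actions match again uses the trivialization. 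All of this is fixable and is carried out in \cite{Bre97}, but your sketch omits the Frobenius-driven mechanism that makes the central step converge.
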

\begin{rem}
We will always identify $V_{\textnormal{st}}(D)$ with $V_{\textnormal{st}}(\D)$ as the same Galois representations, and we denote $\bigMFwa$ as the essential image of the functor $\D$ restricted to $\MFwa$. Also, the functor $\D$ is exact.
\end{rem}

\begin{defn}
Let $D \in \MFwa$, $\bigD =\bigD(D)$. A quasi-strongly divisible lattice of weight $r$ in $\bigD$ is an $S$-submodule
$\bigM$ of $\bigD$ such that,
\begin{enumerate}
\item  $\M$ is $S$-finite free and $\M[1/p] \simeq \D$;
\item  $\M$ is stable under $\varphi$, i.e., $\varphi(\M) \subseteq \M$;
\item $\varphi(\Fil^r \M) \subseteq p^r\M$ where $\Fil^r \M := \M \cap \Fil^r \D$.
\end{enumerate}
A strongly divisible lattice of weight $r$ is a quasi-strongly divisible lattice $\M$ which is stable under monodromy, i.e., $N(\M) \subseteq \M$.
\end{defn}

Let $'\Mod_{S}^{\varphi, N}$ be the category whose objects are 4-tuples $(\M, \Fil^r\M, \varphi_r, N)$, where
\begin{enumerate}
\item $\M$ is an $S$-module, $\Fil^r \M \subseteq \M$ is an $S$-submodule which contains $\Fil^r S \M$;
\item $\varphi_r : \Fil^r \M \to \M$ is a $\varphi_{S}$-semi-linear morphism such that for all $s \in \Fil^r S$ and $x \in \M$, we have $\varphi_r(sx)=c^{-r}\varphi_r(s)\varphi_r(E(u)^rx)$;
\item $N: \M \to \M$ is a $W(k)$-linear map such that $N(sx)=N(s)x+sN(x)$ for all $s \in S, x \in \M$, $E(u)N(\Fil^r \M) \subseteq \Fil^r \M$ and the following
diagram commutes:
$$\xymatrix{\Fil^r \M \ar[d]^{E(u)N} \ar[r]^{\varphi_r} & \M \ar[d]^{cN}\\ \Fil^r \M \ar[r]^{\varphi_r} & \M} $$
\end{enumerate}
Morphisms in the category are given by $S$-linear maps preserving $\Fil^r$ and commuting with $\varphi_r$ and $N$.

\begin{rem}
If we forget the $N$-structure in the above definition, this is precisely $'\Mod_{S}^{\varphi}$ as we defined in Section 2.2.
\end{rem}

Let $\ModFI_{S}^{\varphi, N}$ be the full subcategory of $'\Mod_{S}^{\varphi, N}$ consisting of objects such that:
\begin{enumerate}
\item as an $S$-module, $\M \simeq \oplus_{i \in I}S_{n_i}$, where $I$ is a finite set;
\item $\varphi_r(\Fil^r \M)$ generates $\M$ over $S$.
\end{enumerate}

Finally, we denote $\Mod_{S}^{\varphi, N}$ the full subcategory of $'\Mod_{S}^{\varphi, N}$ such that $\M$ is a finite free $S$-module and for all $n$,
$(\M_n, \Fil^r \M_n, \varphi_r, N) \in \ModFI_{S}^{\varphi, N}$, here $\M_n =\M/p^n$. A sequence in $\Mod_{S}^{\varphi, N}$ is said to be short exact if it is short exact as a sequence in $\Mod_{S}^{\varphi}$.

We can show that $\Asthat \in '\Mod_{S}^{\varphi, N}$. For $\M \in \Mod_{S}^{\varphi, N}$ of $S$-rank $d$,
define $$T_{\textnormal{st}}(\M):= \Hom_{'\Mod_{S}^{\varphi, N}}(\M, \Asthat)$$ as in Section 2.3.1 of \cite{Bre99a}, it is a finite free $\Zp$-representation of $G_K$ of rank $d$.

\begin{prop}[Breuil] \label{strong}
\begin{enumerate}
\item If $\M$ is a quasi-strongly divisible lattice in $\D \in \bigMFwa$, then $(\M, \Fil^r\M, \varphi_r)$ is in $Mod_S^{\varphi}$, where $\varphi_r := \frac{\varphi}{p^r}$.
\item The category of strongly divisible lattices of weight $r$ is equivalent to $\Mod_{S}^{\varphi, N}$. In particular,
for $\M \in \Mod_{S}^{\varphi, N}$, there exists $D \in \MFwa$, such that $\bigD(D) \simeq \M \otimes_{\Zp} \Qp$ as filtered
$(\varphi, N)$-modules over $S$. Furthermore, $T_{\textnormal{st}}(\M)$ is a $G$-stable $\Zp$-lattice in $V_{\textnormal{st}}(D)$.
\end{enumerate}
\end{prop}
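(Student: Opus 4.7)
My plan for Part (1) is the following. The two non-obvious conditions to verify on $(\M, \Fil^r \M, \varphi_r)$ for membership in $\Mod_S^{\varphi}$ are that $\M/\Fil^r \M$ is $p$-torsion free and that $\varphi_r(\Fil^r \M)$ generates $\M$. For the first, I would use that $\Fil^r \M = \M \cap \Fil^r \D$ by definition, which gives an injection $\M/\Fil^r \M \hookrightarrow \D/\Fil^r \D$; since $\D/\Fil^r \D$ is an $S_{K_0}$-module and thus $p$-divisible, its image is $p$-torsion free. For the generation statement, I would first rationalize: the weak admissibility of the underlying $(\varphi,N)$-module together with the bijectivity of $\varphi$ on $\D$ (Hodge--Tate weights lie in $\{0,\dots,r\}$) shows that $\varphi_r(\Fil^r \D) = \D$. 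Thus $\varphi_r(\Fil^r \M)$ generates an $S$-sublattice $\M'$ of $\M$ with $\M'[1/p] = \D$. A direct computation of $\det \varphi$ and a comparison with $t_N(D) = t_H(D)$ (applied to top exterior powers, or via a local study modulo $E(u)$) pins down $\M' = \M$.

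For Part (2), I would proceed in two directions. Going from strongly divisible lattices to $\Mod_S^{\varphi,N}$: by Part (1), $(\M, \Fil^r \M, \varphi_r) \in \Mod_S^{\varphi}$, and the monodromy $N$ inherited from $\D$ restricts to $\M$ by assumption, with the required compatibilities ($E(u)N(\Fil^r\M)\subseteq \Fil^r\M$, $cN\varphi_r = \varphi_r E(u)N$) inherited from those on $\D$. Conversely, given $\M \in \Mod_S^{\varphi,N}$, I would form $\M[1/p]$, viewed as a filtered $(\varphi,N)$-module over $S_{K_0}$ (the non-trivial point being that $\M[1/p]$ satisfies the filtration axioms of $\bigMF$; in particular $\Fil^0(\M[1/p]) = \M[1/p]$ follows from the fact that $\varphi_r(\Fil^r\M)$ generates $\M$, combined with the standard relation between Frobenius and filtration). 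By Theorem \ref{bigD}, there is a unique $D \in \MF$ with $\bigD(D) \simeq \M[1/p]$. To show $D \in \MFwa$, I would exploit the Galois side: $T_{\textnormal{st}}(\M)$ is $\Zp$-finite free of rank $d = \textnormal{rk}_S \M$ (this is a separate claim that follows from Breuil's analysis of $\Asthat$, comparing ranks via reduction mod $p$ and devissage), and a direct pairing gives a $G_K$-equivariant injection $T_{\textnormal{st}}(\M) \hookrightarrow V_{\textnormal{st}}(D)$. Having a $\Zp$-lattice of full rank in $V_{\textnormal{st}}(D)$ of dimension $d$ forces $V_{\textnormal{st}}(D)$ to be a $d$-dimensional semi-stable representation, hence $D$ is weakly admissible by Colmez--Fontaine (Theorem \ref{CF00}).

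The main obstacles will be twofold. First, in Part (1), the equality $\M' = \M$ requires some care when $er$ is close to $p-1$, since one does not have room to play with divided powers; I expect to argue via the filtration on top exterior powers, reducing to the rank one case where the computation is explicit. Second, and more seriously, in Part (2), the rank equality $\textnormal{rk}_{\Zp} T_{\textnormal{st}}(\M) = \textnormal{rk}_S \M$ requires constructing enough morphisms $\M \to \Asthat$, which in Breuil's treatment uses a delicate analysis of the period ring $\Asthat$ and its torsion quotients (the subcategory $\ModFI_S^{\varphi,N}$ plays a crucial role via devissage). Rather than redoing this analysis, my proposal is to invoke \cite{Bre97,Bre99a} for the construction at the torsion level and assemble the finite free case by taking inverse limits, checking flatness of the resulting $T_{\textnormal{st}}(\M)$ against $\Asthat[1/p]/\Asthat$ to conclude.
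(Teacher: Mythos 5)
The paper's own proof of this proposition is a single citation to Theorem 2.2.3 of \cite{Bre02}; no argument is given. Your proposal is therefore a reconstruction of Breuil's proof rather than a variant of the paper's, and its overall architecture is consistent with what Breuil does: $p$-torsion-freeness of $\M/\Fil^r\M$ via the injection into $\D/\Fil^r\D$, generation via an index/determinant computation controlled by $t_H(D)=t_N(D)$, and the transfer of weak admissibility to $D$ from the Galois side through the rank of $T_{\textnormal{st}}(\M)$.

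That said, three steps need genuine work before this becomes a proof. (a) The determinant argument in Part (1) requires an adapted-basis presentation $\Fil^r\M=\oplus S\bolda+\Fil^pS\M$ with $\bolda=A\bolde$ before any determinant can be written down; the version in the paper (Lemma \ref{Filr}) is stated for $\M\in\Mod_S^{\varphi}$, the very category you are trying to land in, so one must verify that a suitable structure theorem holds \emph{a priori} for quasi-strongly divisible lattices without assuming generation --- this, together with the valuation comparison against $t_H=t_N$, is where most of Breuil's work actually lies. (b) In Part (2), applying Theorem \ref{bigD} requires $\M[1/p]\in\bigMF$, i.e.\ a full decreasing filtration $\{\Fil^i\}_{i\in\Z}$ with $\Fil^iS_{K_0}\cdot\Fil^j\subseteq\Fil^{i+j}$ and $N(\Fil^i)\subseteq\Fil^{i-1}$, whereas an object of $\Mod_S^{\varphi,N}$ only hands you $\Fil^r$. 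Your parenthetical addresses only $\Fil^0=\D$; the inductive reconstruction of $\Fil^i$ for the intermediate degrees (descending from $\Fil^r$ using $N$) and the verification of Griffiths transversality is a substantive step you have skipped entirely. (c) The rank statement $\textnormal{rk}_{\Zp}T_{\textnormal{st}}(\M)=\textnormal{rk}_S\M$, which you correctly flag as the deep ingredient, is outsourced to \cite{Bre97,Bre99a}; that is a legitimate move, but then Part (2) is only as self-contained as Breuil's theorem, which is exactly what the paper's one-line citation is acknowledging.
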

\begin{proof}
This is Theorem 2.2.3 in \cite{Bre02}.
\end{proof}
\begin{rem}
From now on, we will use $\Mod_{S}^{\varphi, N}$ to denote the category of strongly divisible lattices of weight $r$. And we use $\widetilde{\Mod_S^{\varphi}}$ to denote the category of quasi-strongly divisible lattices.
\end{rem}

\subsection{Unipotency}

\begin{defn}
$\D \in \bigMF$ (resp. $\bigMFwa$) is called \'{e}tale, multiplicative, nilpotent or unipotent if its corresponding $D \in \MF$ (resp. $\MFwa$) is so.
\end{defn}

\begin{prop}
$\D$ is \'{e}tale if and only if $\Fil^r \D =\D$. $\D$ is multiplicative if and only if $\Fil^i \D = \Fil^i S_{K_0} \D$ for some $1 \leq i \leq r$ (equivalently, for all $1 \leq i \leq r$).
\end{prop}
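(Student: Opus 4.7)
My plan is to unwind the recursive definition of $\Fil^i\D$. First I will establish, by a straightforward induction on $i$, the explicit description
\[
\Fil^i\D = \{x\in\D : f_\pi(N^j(x))\in\Fil^{i-j}D_K\text{ for all }0\le j\le i-1\},
\]
and I will use that $\Fil^1 S_{K_0} = E(u)S_{K_0}$ is precisely $\ker(f_\pi\colon S_{K_0}\to K)$, so $\Fil^i S_{K_0}\cdot\D = E(u)^i\D$ and $\ker(f_\pi\colon\D\to D_K) = \Fil^1 S_{K_0}\cdot\D$.

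The \'{e}tale statement will then follow quickly. For ``$\Rightarrow$'', the hypothesis $\Fil^rD_K=D_K$ together with the decreasing property of the filtration forces $\Fil^jD_K = D_K$ for $0\le j\le r$, and then the explicit formula gives $\Fil^i\D=\D$ for $0\le i\le r$. For ``$\Leftarrow$'', I will apply the surjection $f_\pi$ to $\Fil^r\D = \D$ and use $f_\pi(\Fil^r\D)\subseteq \Fil^rD_K$.

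For the multiplicative statement, I will prove ``$\Leftarrow$'' for all $1\le i\le r$ by induction on $i$. The base case is immediate from $\Fil^1\D = f_\pi^{-1}(0) = \Fil^1 S_{K_0}\cdot\D$. For the inductive step, given $x\in\Fil^{i+1}\D$, I will first use $f_\pi(x)=0$ to write $x = E(u)y_1$, and then iteratively for $k=1,\ldots,i$ use the Leibniz identity $N(x) = -kuE'(u)E(u)^{k-1}y_k + E(u)^k N(y_k)$ together with $N(x)\in E(u)^i\D \subseteq E(u)^k\D$: canceling $E(u)^{k-1}$ (a non-zero-divisor on the free module $\D$), I will get $-kuE'(u)y_k\in E(u)\D$; applying $f_\pi$, since $-k\pi E'(\pi)\neq 0$ in $K$, I will conclude $f_\pi(y_k)=0$, so $y_k = E(u)y_{k+1}$. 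Iterating to $k=i+1$ will give $x\in E(u)^{i+1}\D = \Fil^{i+1}S_{K_0}\D$.

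The hard part will be the converse ``$\Rightarrow$'' under the weaker hypothesis (for some $i_0$). The case $i_0=1$ is immediate from applying $f_\pi$ to both sides. For $i_0\ge 2$, I will argue by contradiction: supposing $0\neq\bar v\in \Fil^1 D_K$ lifts to $v\in D$, I expect that $x := E(u)^{i_0-1}\otimes v$ lies in $\Fil^{i_0}\D\setminus E(u)^{i_0}\D$, contradicting the hypothesis. The second assertion is clear since $v\neq 0$ in $D$. The first will require the Leibniz expansion
\[
N^j(E(u)^{i_0-1}\otimes v) = \sum_{k=0}^j\binom{j}{k}N^{j-k}(E(u)^{i_0-1})\otimes N^k(v),
\]
combined with the fact that $N^{j-k}(E(u)^{i_0-1})$ has $E(u)$-adic order $\ge i_0-1-(j-k)$: this will show $f_\pi(N^j(x)) = 0$ for $0\le j\le i_0-2$ and $f_\pi(N^{i_0-1}(x)) = (-1)^{i_0-1}(i_0-1)!(\pi E'(\pi))^{i_0-1}\bar v$, which lies in $\Fil^1 D_K$. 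The one subtle point to check is that this scalar is nonzero in $K$; this holds because $K$ has characteristic zero, so $(i_0-1)!$ is a unit and $E'(\pi)\neq 0$ since $E$ is the (separable) minimal polynomial of $\pi$.
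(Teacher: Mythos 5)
Your proof is correct except for one slip, and its main divergence from the paper is in the converse direction for multiplicativity. Both arguments unwind the recursive definition of $\Fil^i\D$; your explicit description $\Fil^i\D=\{x : f_\pi(N^j(x))\in\Fil^{i-j}D_K,\ 0\le j\le i-1\}$ and the \'etale equivalences are as in the paper, and your Leibniz induction for ``multiplicative $\Rightarrow \Fil^i\D=E(u)^i\D$'' spells out what the paper leaves to ``inductively it can be deduced'' (you also do not need the paper's aside that $N_D=0$, so your version of that direction works throughout $\bigMF$, not just $\bigMFwa$). Where you genuinely differ is the implication ``$\Fil^{i_0}\D=\Fil^{i_0}S_{K_0}\D$ for some $i_0$ implies multiplicative'': you argue by contradiction, producing a witness $E(u)^{i_0-1}v\in\Fil^{i_0}\D\setminus E(u)^{i_0}\D$ via the multinomial Leibniz expansion, whereas the paper argues directly that for any $x\in\Fil^1\D$ one has $E(u)^{i_0-1}x\in\Fil^{i_0-1}S_{K_0}\Fil^1\D\subseteq\Fil^{i_0}\D=E(u)^{i_0}\D$, so $x\in E(u)\D$ by freeness, giving $\Fil^1\D=\Fil^1S_{K_0}\D$ and hence $\Fil^1D_K=0$ through the identification $\Fil^1D_K=(\Fil^1\D+\Fil^1S_{K_0}\D)/\Fil^1S_{K_0}\D$ from \cite{Bre97}. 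The paper's cancellation argument is shorter; yours avoids invoking the \cite{Bre97} formula for $\Fil^iD_K$ and so is more self-contained, at the cost of the Leibniz bookkeeping.

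The slip: you cannot in general lift $0\ne\bar v\in\Fil^1D_K$ to an element of $D$, because $D\hookrightarrow D_K=D\otimes_{K_0}K$ is not surjective once $e>1$. Take instead any $v\in\D$ with $f_\pi(v)=\bar v$, which exists since $f_\pi$ is surjective. Your computation then goes through unchanged: in $N^j(E(u)^{i_0-1}v)=\sum_k\binom{j}{k}N^{j-k}(E(u)^{i_0-1})N^k(v)$ every $S_{K_0}$-coefficient $N^{j-k}(E(u)^{i_0-1})$ with $j-k<i_0-1$ is divisible by $E(u)$ and therefore killed by $f_\pi$, and the lone surviving term (at $j=i_0-1$, $k=0$) contributes $(-1)^{i_0-1}(i_0-1)!\,(\pi E'(\pi))^{i_0-1}\,f_\pi(v)=(-1)^{i_0-1}(i_0-1)!\,(\pi E'(\pi))^{i_0-1}\,\bar v\in\Fil^1 D_K$, exactly as in your proposal. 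Also, $x\notin E(u)^{i_0}\D$ still holds because $E(u)^{i_0-1}v\in E(u)^{i_0}\D$ would force $v\in E(u)\D$ by freeness, contradicting $f_\pi(v)=\bar v\ne 0$.
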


\begin{proof}
Let $D \in \MF$ be the module corresponding to $\D$.
Note that by Theorem 6.1.1 of \cite{Bre97}, $D \simeq \D/u\D$, and the filtration on $D_K$ is induced from $D_K =\D/(\Fil^1S_{K_0} \D)$, i.e., $\Fil^i D_K = (\Fil^i \D + \Fil^1S_{K_0} \D) / (\Fil^1S_{K_0} \D)$.

If $D$ is \'{e}tale, i.e., $\Fil^r D_K =D_K$, from the definition of filtration of $\D(D)$, inductively it can be deduced that $\Fil^r \D =\D$. On the other hand, if $\Fil^r \D =\D$, then $\Fil^r D_K = (\Fil^r \D + \Fil^1S_{K_0} \D) / (\Fil^1S_{K_0} \D) = D_K$.

If $D$ is multiplicative, i.e., $\Fil^1 D_K=0$, then from the definition of filtration of $\D(D)$, inductively it can be deduced that $\Fil^i \D = \Fil^i S_{K_0} \D$ for all $1 \leq i \leq r$ (note that in this case $N(D)=0$).
On the other hand, if $\Fil^i \D = \Fil^i S_{K_0} \D$ for some $1 \leq i \leq r$, we claim that $\Fil^1 \D = \Fil^1 S_{K_0} \D$. To prove the claim, we assume $i>1$, note that we always have
$\Fil^1 \D \supseteq \Fil^1 S_{K_0} \D$. Now given any $x \in \Fil^1 \D$, then $E(u)^{i-1}x \in \Fil^{i-1}S_{K_0}\Fil^1 \D \subseteq \Fil^i \D =\Fil^i S_{K_0}\D$. It is easy to show that $E(u)^{i-1}S_{K_0} \cap \Fil^i S_{K_0}= E(u)^{i-1}\Fil^1 S_{K_0}$, thus $x \in \Fil^1 S_{K_0}\D$. This proves the claim. So $\Fil^1 D_K=0$, and inductively, $\Fil^i \D = \Fil^i S_{K_0} \D$ for all $1 \leq i \leq r$.
\end{proof}

\begin{thm} \label{etalequasi}
Let $V$ be a semi-stable Galois representation, $D=D_{\textnormal{st}}(V) \in \MFwa$, and $\D \in \bigMFwa$ the module corresponding to $D$. Suppose $\M \subset \D$ is a quasi-strongly divisible lattice. Then $V$ is \'{e}tale (or multiplicative, nilpotent, unipotent) if and only if $\bigM$ (regarded as a module in $\Mod_{S}^{\varphi}$ via Proposition \ref{strong}) is \'{e}tale (or multiplicative, nilpotent, unipotent).
\end{thm}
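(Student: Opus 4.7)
I would handle each of the four properties in turn.

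\emph{\'Etale.} By the preceding proposition, $D$ is \'etale iff $\Fil^r \D = \D$. Combined with $\Fil^r \M = \M \cap \Fil^r \D$ and $\M[1/p] = \D$, this yields the chain $\Fil^r \M = \M \iff \M \subseteq \Fil^r \D \iff \D \subseteq \Fil^r \D \iff \D \text{ is \'etale}$, the last equivalence because $\D = \M[1/p]$.

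\emph{Multiplicative and nilpotent.} I reduce these to the \'etale and unipotent cases via Cartier duality. One first verifies that $\M^{\vee} := \Hom_S(\M, S)$, equipped with the induced filtration and $\varphi_r^{\vee}$, is a quasi-strongly divisible lattice in $\D^{\vee}$ (a routine check using the compatible dualities on $\Mod_S^{\varphi}$ from Corollary~\ref{Sexact} and on $\bigMFwa$). Then $\M$ is multiplicative iff $\M^{\vee}$ is \'etale (Corollary~\ref{Sexact}) iff $\D^{\vee}$ is \'etale (by the \'etale case) iff $\D$ is multiplicative; similarly for nilpotent versus unipotent via the duality.

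\emph{Unipotent, both directions} will be shown by constructing short exact sequences of subquotients. If $V$ is not unipotent, take a nonzero \'etale quotient $0 \to D_1 \to D \to D_2 \to 0$ in $\MFwa$, giving $0 \to \D_1 \to \D \to \D_2 \to 0$ in $\bigMFwa$; set $\M_1 := \M \cap \D_1$ and $\M_2 := \M/\M_1 \hookrightarrow \D_2$. The main work is to verify that $\M_1, \M_2$ are finite free over $S$ (hence quasi-strongly divisible lattices in $\D_1, \D_2$) and that $0 \to \M_1 \to \M \to \M_2 \to 0$ is short exact in $\Mod_S^{\varphi}$ with exact filtration sequence; one lifts bases via Nakayama, exploiting that $\M_i \otimes \Qp = \D_i$ is $S_{K_0}$-free. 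Then $\Fil^r \M_2 = \M_2 \cap \Fil^r \D_2 = \M_2$, since $\D_2$ is \'etale, shows $\M_2$ is a nonzero \'etale quotient of $\M$ in $\Mod_S^{\varphi}$.

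Conversely, suppose $\M$ admits a nonzero \'etale quotient $0 \to \M_1 \to \M \to \M_2 \to 0$ in $\Mod_S^{\varphi}$; set $\D_i := \M_i[1/p]$ and $D_i := \D_i/u\D_i$. Since $\D_2$ is \'etale, $\varphi_{\D_2}$ is bijective, so the relation $N\varphi = p\varphi N$ forces any induced $N$ on $D_2$ to be $0$; moreover, an iteration of this relation applied to the composite $\psi := \mathrm{proj} \circ N|_{\M} : \M \to \D_2$ (combined with $\varphi(\M_2) = p^r \M_2$, which follows from \'etalness) yields $N(\D) \subseteq \D_1$, so the sequence $0 \to D_1 \to D \to D_2 \to 0$ lies in $\MF$. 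Weak admissibility of $D_2$ follows because $\varphi(\M_2) = p^r \M_2$ gives $t_N(D_2) = r \dim_{K_0} D_2 = t_H(D_2)$, and the sub-bound $t_H(D_2') \leq t_N(D_2')$ is inherited from $D$. Hence $D$ admits a nonzero \'etale quotient in $\MFwa$, so $V$ is not unipotent. The main obstacle is producing the finite-free $S$-lattice structures on $\M_1 = \M \cap \D_1$ and $\M_2 = \M/\M_1$ with the correct filtered structure in the first direction; the subtle $N$-descent in the second direction also deserves careful attention.
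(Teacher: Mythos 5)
The \'etale case and the idea of reducing the four properties to two via Cartier duality are both fine and match the paper (the paper happens to reduce to \'etale and \emph{nilpotent} rather than \'etale and unipotent, but that is immaterial). The unipotent case, however, diverges from the paper and has two real problems.

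In the direction ``$V$ not unipotent $\Rightarrow$ $\M$ not unipotent'' you set $\M_1 := \M \cap \D_1$ and assert, via Nakayama and $\M_1[1/p] = \D_1$, that $\M_1$ and $\M_2 := \M/\M_1$ are finite free over $S$. This does not follow: $S$ is not a PID, a finitely generated torsion-free $S$-module need not be free, and the residue rank and generic rank need not agree for a non-free module, so Nakayama alone gives only a surjection. The paper sidesteps the issue entirely: in the dual (nilpotent) formulation, it takes the nonzero multiplicative submodule $\D^{\rm m} \subseteq \D$, picks \emph{any} quasi-strongly divisible lattice $\M'$ in $\D^{\rm m}$ (which exists by weak admissibility, and is multiplicative by the easy direction), and then $p^n\M' \subseteq \M$ for $n \gg 0$ forces $p^n\M' \subseteq \M^{\rm m}$, contradicting $\M^{\rm m} = 0$. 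No auxiliary lattice ever needs to be shown free.

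In the converse direction the claim ``$N(\D) \subseteq \D_1$'' is too strong and is in fact false: $N_\D = N_S \otimes 1 + 1 \otimes N_D$, and $N_S \otimes 1$ certainly does not carry $\D$ into $\D_1$. What is actually needed is only $N_D(D_1) \subseteq D_1$; and the way you propose to obtain it (iterating on a composite $\M \to \D_2$ using $\varphi(\M_2) = p^r\M_2$) does not produce the needed vanishing as written, and would anyway require weak admissibility of the quotient to be established first to control the Newton slopes, so the order of your steps is off. The paper's mechanism is completely different and cleaner. Working dually with $\M^{\rm m}$, it notes that $p^k N(\M) \subseteq \M$ for some $k$ (since $N$ preserves $\D$), that $N\varphi^n(x) = \varphi^n(p^nN(x))$, so $(\varphi^{\ast})^n(\M)$ is $N$-stable for $n \geq k$; then Proposition~\ref{bigMm} gives $\M^{\rm m} = \cap_n (\varphi^{\ast})^n(\M)$, so $\M^{\rm m}$ is $N$-stable, and weak admissibility of $\D^{\rm m}$ follows from Proposition 2.1.3 of \cite{Bre99a}. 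I would recommend redoing the unipotent case along those lines; the key inputs you are missing are the canonical, already-known-to-be-free object $\M^{\rm m}$ supplied by the $\Sigma$-module theory of Theorem~\ref{Mm} and Corollary~\ref{Sexact}, and the $N$-stability argument via Proposition~\ref{bigMm}.
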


\begin{proof}
It is easy to check that $\M^{\vee}$ is a quasi-strongly divisible lattice of $\D^{\vee}$, thus by duality, we only need to prove the theorem for \'{e}tale and nilpotent representations.

Suppose $V$ is \'{e}tale, then $\bigD$ is \'{e}tale, so $\Fil^r \bigD = \bigD$. Thus $\Fil^r \M = \M \cap \Fil^r \D = \M$, i.e., $\M$ is \'{e}tale.

Suppose $V$ is nilpotent, if $\M$ is not nilpotent, then $\M^{\textnormal{m}}$ is nonzero. Let $\D^{\textnormal{m}} = \M^{\textnormal{m}}\otimes
S[1/p]$ with a filtration induced from that of $\D$. We claim that $\D^{\textnormal{m}}$ is a submodule of $\D$ in the category $\bigMFwa$.

$\D^{\rm m}$ is $\varphi$-stable because $\M^{\rm m}$ is. Next we show that $\D^{\rm{m}}$ is $N$-stable. In fact, we can show that $\M^{\rm{m}}$ is $N$-stable. Since $\M \otimes S[1/p]=\D$ is $N$-stable, thus there exists $k \in \N$, such that $p^k N(\M) \subseteq \M$. Let $x \in \M$. By using the relation $N \varphi = p\varphi N$ in $\D$, we have $N \varphi^n(x) = \varphi^n(p^n N(x))$. Thus for $n \geq k$, $(\varphi^{\ast})^n(\M)$ is $N$-stable. By Proposition \ref{bigMm}, $\M^{\textnormal{m}} = \cap (\varphi^{\ast})^n(\M)$, so $\M^{\rm m}$ is also $N$-stable. Thus, $\D^{\rm{m}} \in \bigMF$.

The only thing left now is to show that $\D^{\textnormal{m}}$ is weakly admissible. Suppose $D^{\textnormal{m}} \in \MF$ is the corresponding module, then we have $t_H(D^{\textnormal{m}}) = t_N(D^{\textnormal{m}})$ by Proposition 2.1.3 of \cite{Bre99a} (since $\M^{\textnormal{m}}$ is a quasi-strongly divisible lattice in $\D^{\textnormal{m}}$). For any submodule $D'$ of $D^{\textnormal{m}}$, since it is also a submodule of the weakly admissible module $D$, $t_H(D') \leq t_N(D')$. Thus $D^{\textnormal{m}}$ is weakly admissible, and so is $\D^{\textnormal{m}}$.

Now we have proved that $\D^{\textnormal{m}}$ is a submodule of $\D$ in the category $\bigMFwa$, and since $\Fil^r \D^{\textnormal{m}} = \D^{\textnormal{m}} \cap \Fil^r \D = \M^{\textnormal{m}}\otimes S[1/p]\cap (\Fil^r\M)\otimes S[1/p]= (\Fil^rS \M^{\textnormal{m}})\otimes S[1/p] = \Fil^r S_{K_0}\D^{\textnormal{m}}$, so $\D^{\textnormal{m}}$ is a multiplicative object. This contradicts the nilpotency of $\D$.

For the other direction, if $\M$ is \'{e}tale, then $\Fil^r \M =\M$, so $\M \subset \Fil^r \D$. Thus $\Fil^r \D =\D$, i.e., $\D$ is \'{e}tale. If $\M$ is nilpotent and $\D$ is not, then there is a nonzero multiplicative $\D^{\textnormal{m}}$ which will contain a nonzero multiplicative quasi-strongly divisible lattice $\M'$, and for $n$ big enough, $p^n \M' \subseteq \M$. Since $\M'$ is multiplicative, $p^n \M' \subseteq \M^{\textnormal{m}}$, contradicting the nilpotency of $\M$.
\end{proof}

\begin{rem}
In the case $e=1$, $N_D=0$ (i.e., crystalline representations), if we tensor the $W(k)$-lattice constructed in \cite{FL82} by $S$, it will give us an example of strongly divisible lattice (see Example 2.2.2 (2) of \cite{Bre02}). It is easy to check that the notion of ``unipotency" of $W(k)$-lattices in \cite{FL82} is compatible with ours, i.e., unipotent $W(k)$-lattices will give us unipotent $S$-lattices.
\end{rem}

Now we can state our main theorem.
\begin{thm}[Main Theorem]\label{mainthm}
Let $p$ be a prime, $r =p-1$. The functor $T_{\textnormal{st}}: \Mod_S^{\varphi, N} \to \Rep_{\Zp}^{\textnormal{st}}(G_K)$ establishes an anti-equivalence between the category of unipotent strongly divisible lattices of weight $r$ and the category of $G_K$-stable $\Zp$-lattices in unipotent semi-stable $p$-adic Galois representations of $G_K$ with Hodge-Tate weights in $\{0, \ldots, r\}$.
\end{thm}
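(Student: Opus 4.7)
The plan is to prove full faithfulness and essential surjectivity of $T_{\textnormal{st}}$ separately, following the strategy sketched in the introduction.

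For full faithfulness, I would start by observing that every unipotent strongly divisible lattice $\M$ of weight $r=p-1$, viewed as an object of $\Mod_S^\varphi$, is unipotent in the sense of Definition \ref{defunip} (so that Corollary \ref{Sexact} and Theorem \ref{mtoM} apply). Theorem \ref{mtoM} then yields a unique unipotent $\huaM \in \Mod_{\huaS}^\varphi$ with $\M_{\huaS}(\huaM)=\M$ and $T_{\huaS}(\huaM)=T_{\textnormal{cris}}(\M)$ as $\Zp[G_\infty]$-modules. Since the underlying $\Zp$-module of $T_{\textnormal{st}}(\M)$ coincides with $T_{\textnormal{cris}}(\M)$, and $T_{\huaS}$ on $\Mod_{\huaS}^\varphi$ is fully faithful (Kisin), any morphism $T_{\textnormal{st}}(\M_1)\to T_{\textnormal{st}}(\M_2)$ compatible with $G_K$ is in particular compatible with $G_\infty$, hence comes from a unique morphism $\huaM_2\to\huaM_1$ and therefore from a unique morphism $\M_2\to\M_1$ in $\Mod_S^{\varphi}$. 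The only thing left is to verify that this morphism commutes with $N$, which is a purely formal check since the $N$-structure is uniquely determined by the rest of the data once we know that both sides are genuine strongly divisible lattices (this is essentially the injectivity part of Proposition \ref{strong}, and I would cite Lemma 3.3.4 of \cite{Liu08} or an analogous statement). This step is much cleaner than in \cite{Liu08} precisely because Theorem \ref{mtoM} subsumes what was done there in Lemma 3.4.7.

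For essential surjectivity, let $T$ be a $G_K$-stable $\Zp$-lattice in a unipotent semi-stable representation $V$ with Hodge--Tate weights in $\{0,\dots,p-1\}$, and set $D=D_{\textnormal{st}}(V)$, $\D=\D(D)$. By Kisin \cite{Kis06}, there exists $\huaM\in\Mod_{\huaS}^\varphi$ with $T_{\huaS}(\huaM)=T|_{G_\infty}$. Set $\M:=\M_{\huaS}(\huaM)\subset\D$; by a standard comparison (e.g.\ Breuil's construction, see Theorem 2.2.1 of \cite{Liu08}), $\M$ is a quasi-strongly divisible lattice in $\D$. Since $V$ is unipotent, Theorem \ref{etalequasi} shows that $\M$ is unipotent as an $S$-module, so by Theorem \ref{mtoM} the $\huaM$ we started with is necessarily unipotent.

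The main obstacle is then the \emph{monodromy step}: upgrading the quasi-strongly divisible lattice $\M$ to an actual strongly divisible lattice, i.e.\ showing $N(\M)\subseteq\M$. This is where the hypothesis that $T$ is $G_K$-stable (not merely $G_\infty$-stable) enters. For $p>2$ I would invoke Liu's $(\varphi,\hat G)$-module theory from \cite{Liu10} and \cite{Liu12}: the $G_K$-action on $T$ extends the $G_\infty$-action on $T_{\huaS}(\huaM)$ to a $\hat G$-structure $\hat\huaM$ on $\huaM$, and by the explicit comparison between $\hat G$-structures and monodromy operators one deduces $N_{\D}(\M)\subseteq\M$ directly; the argument is formally identical to the one used in \cite{Liu08} for $r<p-1$, and nothing in it requires $r<p-1$ once one has the full faithfulness established above. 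For $p=2$ (where the integral theory of \cite{Liu10} is unavailable in the form needed), I would fall back on Breuil's strategy from \cite{Bre02}: approximate $V$ by representations of smaller weight or by lifting the situation to a $p$-divisible group (unipotency is exactly the condition that permits such an interpretation), reducing monodromy-stability in the critical weight $r=p-1$ to the already-known case treated in \cite{Bre00}.

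Finally, I would package the two parts: full faithfulness plus the construction above show that every $T$ in the target category lies in the essential image of $T_{\textnormal{st}}$ restricted to unipotent strongly divisible lattices, and combined with full faithfulness this gives the anti-equivalence. The hard part is genuinely the monodromy step for $p=2$; everything else follows by stringing together Theorem \ref{mtoM}, Theorem \ref{etalequasi}, Proposition \ref{strong}, and the known results of Kisin and Liu.
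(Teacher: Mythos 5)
Your overall structure matches the paper: full faithfulness via Theorem \ref{mtoM} together with Kisin's full faithfulness of $T_{\huaS}$ (and yes, this does make Lemma 3.4.7 of \cite{Liu08} unnecessary, exactly as the paper remarks), and essential surjectivity reduced to monodromy-stability of the quasi-strongly divisible lattice $\M=\M_{\huaS}(\huaM)$, which for $p>2$ is handled by the $(\varphi,\hat{G})$-module theory of \cite{Liu10} and \cite{Liu12} (Proposition 2.4.1 of \cite{Liu12}). Those parts are sound and follow the paper's route.

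The $p=2$ monodromy step is where your proposal has a genuine gap. You propose reducing to ``the already-known case treated in \cite{Bre00}'', but \cite{Bre00} assumes $p>2$ throughout, so there is nothing to reduce to; and ``approximate $V$ by representations of smaller weight'' is vacuous here, since for $p=2$ one has $r=p-1=1$, already the minimal nontrivial weight. What Breuil's strategy in Section 3.5 of \cite{Bre02} actually amounts to, and what the paper adapts to the unipotent $p=2$ setting, is a comparison between the semi-stable module $D$ and the crystalline module $D'$ obtained by forgetting $N$: one checks that $D'$ is still weakly admissible and unipotent, proves the crystalline version of the main theorem for $p=2$ and weight one (Proposition \ref{p2crysff}, using that for crystalline modules $N(\M)\subseteq u\M$), introduces the multiplicative submodule $D_0:=N(D)\hookrightarrow D'$ and the morphism $f:D'\to D_0(1)$ induced by $N_D$, lifts these to morphisms $\alpha:\M_0\to\M'$ and $\beta:\M'\to\M_0(1)$ of lattices (the latter via Kisin modules and Proposition \ref{ThuaS}, with some care because $\M_0(1)$ is \'etale rather than unipotent), and deduces from the composite $\alpha(1)\circ\beta = 1\otimes N_D$ that $\M'$ is $1\otimes N_D$-stable; combined with the $N_S\otimes 1$-stability of Lemma \ref{collect} this gives $N_\D(\M)\subseteq\M$. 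Your $p$-divisible-group remark gestures at the right conceptual link, but does not produce a concrete argument, and the two explicit reductions you name cannot work. This is the one place where your proposal does not close.
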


\section{Proof of the main theorem}

\subsection{Full faithfulness of $T_{\textnormal{st}}$}

Let $\Mod_{\huaS}^{\varphi, N}$ be the category whose objects are triples $(\huaM, \varphi, N)$,
 where $(\huaM, \varphi)$ is an object in $\Mod_{\huaS}^{\varphi}$,
  and $N : \huaM/u\huaM\otimes_{\Zp} \Qp \to \huaM/u\huaM\otimes_{\Zp} \Qp $ is a linear homomorphism such that $N\varphi =p\varphi N$. The morphisms are $\huaS$-linear maps that are compatible with $\varphi$ and $N$. The following theorem is a main result in \cite{Kis06}.

\begin{thm}[Kisin] \label{Dtom}
  There is a fully faithful $\otimes$-functor $\theta : \MFwa \to \Mod_{\huaS}^{\varphi, N}$, and there is a canonical bijection  $V_{\textnormal{st}}(D)   \simeq T_{\huaS}({\theta(D)}) \otimes_{\Zp}\Qp$ which is compatible with $G_{\infty}$-actions.
\end{thm}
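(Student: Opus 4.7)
The plan is to follow Kisin's strategy via $\varphi$-modules over the rigid analytic ring $\mathcal{O}$ of rigid analytic functions on the open unit $u$-disc over $K_0$, which contains $\huaS[1/p]$ as the subring of bounded functions. The construction of $\theta$ splits into three pieces: (a) attach to $D \in \MFwa$ a $\varphi$-module $\mathcal{M}(D)$ over $\mathcal{O}$ of finite $E(u)$-height, (b) descend to a canonical $\huaS$-lattice $\huaM$, and (c) install the monodromy on $\huaM/u\huaM \otimes \Qp$.

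For (a), I would form the trivial $\varphi$-module $D \otimes_{K_0}\mathcal{O}$ and, using the filtration on $D_K = D \otimes_{K_0}K$ pulled back along $u \mapsto \pi$, define a modified $\varphi$-stable lattice $\mathcal{M}(D) \subset D \otimes_{K_0}\mathcal{O}[1/\lambda]$, where $\lambda = \prod_{n \geq 0}\varphi^n(E(u)/E(0))$. The central input, relying on Kedlaya's slope filtration theorem, is that $D \mapsto \mathcal{M}(D)$ defines a fully faithful $\otimes$-functor from $\MFwa$ into $\varphi$-modules over $\mathcal{O}$ of finite $E(u)$-height. Fullness reduces to the observation that a Frobenius-equivariant map at the level of $\mathcal{O}$-modules specializes at $u = \pi$ to a morphism of filtered $K$-vector spaces, and together with the fact that $N_D$ is recovered from the connection on $\mathcal{M}(D)[1/\lambda]$ via the Leibniz rule, this recovers a morphism in $\MFwa$.

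For (b) and (c), I would invoke Kisin's classification (Proposition 2.1.12 of \cite{Kis06}): every $\varphi$-module over $\mathcal{O}$ of finite $E(u)$-height contains a unique $\varphi$-stable $\huaS$-lattice $\huaM$ such that the cokernel of $1 \otimes \varphi : \varphi^{\ast}\huaM \to \huaM$ is killed by $E(u)^r$. Setting $\theta(D) := \huaM$, the monodromy on $\huaM/u\huaM \otimes \Qp$ is transported from $N_D$ through the natural identification $\huaM/u\huaM \otimes \Qp \simeq D$ obtained by reducing $D \otimes \mathcal{O}$ modulo $u$, and the relation $N\varphi = p\varphi N$ is inherited from $D$. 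Full faithfulness of $\theta$ then follows from full faithfulness of $\mathcal{M}(-)$ together with the canonicity of $\huaM$ inside $\mathcal{M}(D)$ and the functoriality of the reduction at $u = 0$.

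For the Galois comparison $V_{\textnormal{st}}(D) \simeq T_{\huaS}(\theta(D)) \otimes_{\Zp}\Qp$, I would construct a $G_\infty$-equivariant map from the right-hand side to the left using the compatible chain $\huaS^{\textnormal{ur}} \hookrightarrow A_{\textnormal{cris}} \subset B_{\textnormal{st}}$: an $f \in \Hom_{\huaS, \varphi}(\huaM, \huaS^{\textnormal{ur}})$ extends $\mathcal{O}$-linearly to $\mathcal{M}(D) \to B_{\textnormal{cris}}^{+}$, and after inverting $p$ and using the identification at $u = 0$ yields a map $D \to B_{\textnormal{st}}$ which one checks respects $N$ (via $N\varphi = p\varphi N$) and the filtration (via the way $\mathcal{M}(D)$ was built from the filtration on $D_K$). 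Injectivity is immediate since $D$ is recovered from $\huaM \otimes \Qp$, and a dimension count then forces bijectivity after $\otimes \Qp$. The main obstacle, and the technical heart of the argument, is the converse bounded-growth estimate: showing that any $f \in V_{\textnormal{st}}(D)$ restricts to a map landing in $\huaS^{\textnormal{ur}}[1/p]$ rather than merely in $B_{\textnormal{cris}}^{+}$. This requires a careful analysis of how Frobenius iterates interact with the slope filtration on $\mathcal{M}(D)$, which is where Kedlaya's theorem enters decisively.
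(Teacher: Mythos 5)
The paper itself offers no proof of this theorem: it is quoted as a main result of \cite{Kis06} (in the formulation also used in \cite{Liu08}), so your proposal has to be measured against Kisin's actual argument. Your overall architecture does match it: the $\varphi$-module $\mathcal{M}(D)$ over the ring $\mathcal{O}$ of the open unit disc built by modifying $D\otimes_{K_0}\mathcal{O}$ along the filtration, descent to an $\huaS$-lattice of finite $E(u)$-height, the monodromy read off from the fibre at $u=0$, and a period-ring comparison identifying $V_{\textnormal{st}}(D)$ with $T_{\huaS}(\theta(D))\otimes_{\Zp}\Qp$ as $G_{\infty}$-representations.

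There are, however, genuine defects in your step (b) that carry real weight. First, a $\varphi$-module over $\mathcal{O}$ of finite $E(u)$-height does \emph{not} in general admit an $\huaS$-lattice of finite height: the existence of such a lattice forces the module to become \'etale (pure of slope zero) over $\bigOE$, and it is exactly here that Kedlaya's slope filtration theorem enters, through Kisin's result that $D$ is weakly admissible if and only if $\mathcal{M}(D)$ is pure of slope zero. In your sketch Kedlaya is invoked only for full faithfulness of $\mathcal{M}(-)$, so the hypothesis that actually makes the descent step work is never supplied. Second, the uniqueness you assert is false: $p\huaM$ is again a finite-height lattice in the same $\mathcal{O}$-module, and non-homothetic finite-height lattices genuinely occur -- they correspond to the distinct $G_{\infty}$-stable $\Zp$-lattices in $V$, whose abundance is the very subject of this paper. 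Kisin's descent lemma gives existence, and canonicity only up to isogeny; correspondingly $\theta$ is well defined and fully faithful only when morphisms of Kisin modules are compared after inverting $p$, which is how the statement is meant in \cite{Kis06} and \cite{Liu08}. Your appeal to ``canonicity of $\huaM$ inside $\mathcal{M}(D)$'' to get a fully faithful $\theta$ on the nose therefore does not work as written. (Also, Proposition 2.1.12 of \cite{Kis06}, which you cite for the descent, is the statement about realizing $G_{\infty}$-stable lattices quoted here as Proposition \ref{ThuaS}, not the lattice-descent lemma.) Finally, in the Galois comparison your closing ``bounded-growth estimate'' is redundant with your own dimension count: once the natural $G_{\infty}$-equivariant map $T_{\huaS}(\theta(D))\otimes_{\Zp}\Qp \to V_{\textnormal{st}}(D)$ is shown injective, equality of $\Qp$-dimensions already forces bijectivity, and no separate converse estimate is needed.
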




\begin{prop}[Kisin] \label{ThuaS}
\begin{enumerate}
\item For any $G_{\infty}$-stable $\Zp$-lattice $T$ in a semi-stable Galois representation $V$, there always exists an $\huaN \in \Mod_{\huaS}^{\varphi}$ such that $T_{\huaS}(\huaN) \simeq T$.
\item $T_{\huaS}: \Mod_{\huaS}^{\varphi} \to \Rep_{\Zp}(G_{\infty})$ is fully faithful.
\end{enumerate}
\end{prop}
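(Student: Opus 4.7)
Both parts are due to Kisin \cite{Kis06}, so my plan is to follow that reference's strategy; I outline how I would organize the argument below.

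For part (2), I would factor $T_{\huaS}$ through the category of finite free étale $(\varphi,\bigOE)$-modules via base change: sending $\huaM$ to $M(\huaM) := \huaM\otimes_{\huaS}\bigOE$. Fontaine's classical equivalence (\cite{Fon90}) then identifies that target category with $\Rep_{\Zp}(G_{\infty})$ by the same $\huaS^{\textnormal{ur}}$-recipe, so full faithfulness of $T_{\huaS}$ reduces to full faithfulness of the base-change functor $\huaM \mapsto M(\huaM)$. The key technical step is the following lemma: for any $\huaM_1,\huaM_2\in\Mod_{\huaS}^{\varphi}$, every $\varphi$-equivariant $\bigOE$-linear map $f\colon M(\huaM_1)\to M(\huaM_2)$ satisfies $f(\huaM_1)\subseteq\huaM_2$. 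To prove this, I would take $x\in\huaM_1$ and write $f(x)=u^{-n}y$ for some $y\in\huaM_2$ and minimal $n\ge 0$; applying $\varphi$ and using the relations $E(u)^r\huaM_i\subseteq\varphi^*\huaM_i$ together with injectivity of the Frobenius on the target, one concludes that $n=0$, so $f(x)\in\huaM_2$.

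For part (1), I would combine Kisin's rational theorem (Theorem \ref{Dtom}) with Fontaine's integral étale-module machinery. Let $V$ be the ambient semi-stable representation and $D=D_{\textnormal{st}}(V)$; Theorem \ref{Dtom} gives an object $\huaM_{\Qp}:=\theta(D)$ over $\huaS[1/p]$ of finite $E$-height, while Fontaine's equivalence assigns to the lattice $T$ a finite free étale $(\varphi,\bigOE)$-module $M(T)$. Comparing the two recoveries of $V|_{G_{\infty}}$ yields a canonical identification $\huaM_{\Qp}\otimes_{\huaS}\mathcal{E}\cong M(T)\otimes_{\bigOE}\mathcal{E}$. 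Inside this common ambient space I would define
$$\huaN\ :=\ \huaM_{\Qp}\ \cap\ M(T).$$
By construction $\huaN$ is $\varphi$-stable. I then need to verify that $\huaN$ is finite free over $\huaS$, has $E$-height $\le r$, and satisfies $T_{\huaS}(\huaN)\simeq T$; the last of these is essentially formal once the first two are in place, by tracing through the definitions of $T_{\huaS}$ and the étale equivalence.

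The main obstacle I expect is verifying that $\huaN$ is genuinely finite free over $\huaS$ of the correct rank, rather than merely finitely generated and reflexive. My plan here is to work locally at the height-one primes of $\huaS$: away from $(p)$ the module $\huaN$ is controlled by the finite-height data of $\huaM_{\Qp}$, while away from $(E(u))$ (in fact after inverting $u$) it is controlled by the étale structure of $M(T)$. Since $\huaS=W(k)[\![u]\!]$ is a regular $2$-dimensional local ring, a finitely generated reflexive module of generic rank $d$ that is free in codimension $1$ must be free of rank $d$; this produces the desired freeness. The $E$-height bound on $\huaN$ then follows from a determinant/elementary-divisor comparison between $\varphi$ on $\huaN$ and on $\huaM_{\Qp}$, and the identification $T_{\huaS}(\huaN)\simeq T$ is immediate from the étale-module side of the construction.
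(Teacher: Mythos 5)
The paper does not prove this proposition; it cites Proposition 2.1.12 and Lemma 2.1.15 of \cite{Kis06} verbatim, adding only the remark that those results hold with any finite bound $r$ on the Hodge--Tate weights. Your outline is a reconstruction of Kisin's own argument in that reference, so there is no competing approach to compare; the factorization of $T_{\huaS}$ through Fontaine's \'etale $(\varphi,\bigOE)$-module equivalence for part (2), and the construction $\huaN=\huaM_{\Qp}\cap M(T)$ together with the reflexivity/codimension-one freeness criterion over the $2$-dimensional regular local ring $\huaS$ for part (1), are exactly what \cite{Kis06} does.

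Two small imprecisions are worth flagging before you commit to the details. First, in the key lemma for part (2), an element of $\bigOE$ generally cannot be written as $u^{-n}y$ with $y\in\huaS$ and $n$ finite (consider $\sum_{k\geq 1}p^k u^{-k}$); the decomposition you invoke is valid only modulo $p^m$, where $\bigOE/p^m\bigOE\cong(\huaS/p^m\huaS)[1/u]$, so the minimality-of-$n$ argument must be run at each finite level and the integral statement recovered by d\'evissage. Second, for part (1) the commutative-algebra step requires first knowing that $\huaN$ is a finitely generated reflexive $\huaS$-module; finite generation is quick (choose an $\huaS$-lattice $L$ with $L[1/p]=\huaM_{\Qp}$ and $n$ with $M(T)\subseteq p^{-n}(L\otimes_{\huaS}\bigOE)$, and observe $\huaN\subseteq p^{-n}L$), but it should be recorded, and reflexivity needs a separate check before Serre's criterion applies. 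Both points are handled in the cited proofs, so with these patches your plan reproduces them faithfully.
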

\begin{proof}
This is Proposition 2.1.12 and Lemma 2.1.15 of \cite{Kis06}. Remark that the Proposition is valid for any semi-stable representation with Hodge-Tate weights $\subseteq \{0, \ldots, r\}$ for any $0 \leq r < \infty$.
\end{proof}


\begin{prop} \label{Tcris}
Let $\widetilde{\Mod_S^{\varphi,\rm u}}$ be the category of unipotent quasi-strongly divisible lattices of weight $p-1$, $\Rep_{\Zp}^{\rm st,\rm u}(G_{\infty})$ the category of $G_{\infty}$-stable $\Zp$-lattices in unipotent semi-stable representations with Hodge-Tate weights $\subseteq \{0, \ldots, p-1\}$. Then $T_{\rm cris}: \widetilde{\Mod_S^{\varphi,\rm u}} \to \Rep_{\Zp}^{\rm st,\rm u}(G_{\infty})$ establishes an anti-equivalence.
\end{prop}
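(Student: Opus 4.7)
The plan is to reduce the proposition to three ingredients already assembled in the paper: the equivalence $\M_{\huaS}$ between unipotent $\huaS$-modules and unipotent $S$-modules (Theorem \ref{mtoM}), Kisin's results on $\huaS$-modules of finite height (Theorem \ref{Dtom} and Proposition \ref{ThuaS}), and the compatibility of unipotency with the associated Galois representation (Theorem \ref{etalequasi}).

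For full faithfulness, let $\M_1, \M_2 \in \widetilde{\Mod_S^{\varphi,\rm u}}$. By Theorem \ref{etalequasi} each $\M_i$ is a unipotent object of $\Mod_S^\varphi$, so Theorem \ref{mtoM} produces unipotent $\huaM_i \in \Mod_{\huaS}^\varphi$ (unique up to canonical isomorphism) with $\M_{\huaS}(\huaM_i) \simeq \M_i$ and $T_{\rm cris}(\M_i) \simeq T_{\huaS}(\huaM_i)$. A morphism of quasi-strongly divisible lattices is nothing other than a morphism in $\Mod_S^\varphi$ preserving $\Fil^r$ and $\varphi$, so the equivalence of Theorem \ref{mtoM} identifies $\Hom_{\widetilde{\Mod_S^{\varphi,\rm u}}}(\M_1, \M_2)$ with $\Hom_{\Mod_{\huaS}^{\varphi}}(\huaM_1, \huaM_2)$, and Proposition \ref{ThuaS}(2) further identifies this with $\Hom_{G_\infty}(T_{\rm cris}(\M_2), T_{\rm cris}(\M_1))$.

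For essential surjectivity, let $T \in \Rep_{\Zp}^{\rm st,\rm u}(G_\infty)$ sit inside a unipotent semi-stable representation $V = T \otimes_{\Zp} \Qp$ with $\HT(V) \subseteq \{0, \ldots, p-1\}$. Proposition \ref{ThuaS}(1) supplies $\huaM \in \Mod_{\huaS}^\varphi$ with $T_{\huaS}(\huaM) \simeq T$, and we set $\M := \M_{\huaS}(\huaM) \in \Mod_S^\varphi$, so that $T_{\rm cris}(\M) \simeq T$ by Theorem \ref{mtoM}. To realize $\M$ as a quasi-strongly divisible lattice, apply Kisin's functor $\theta$ of Theorem \ref{Dtom} to $D := D_{\textnormal{st}}(V)$, obtaining $\huaN$ with $T_{\huaS}(\huaN) \otimes \Qp \simeq V$. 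The full faithfulness of $T_{\huaS}$ after inverting $p$ yields $\huaN \otimes \Qp \simeq \huaM \otimes \Qp$, whence $\M[1/p] \simeq \M_{\huaS}(\huaN)[1/p] \simeq \bigD(D)$ as filtered $\varphi$-modules over $S_{K_0}$; this identification realizes $\M$ as an $S$-finite-free submodule of $\bigD(D)$, and the axioms of a quasi-strongly divisible lattice follow automatically from $\M \in \Mod_S^\varphi$. Finally Theorem \ref{etalequasi} transports the unipotency of $V$ back to $\M$, placing $\M$ in $\widetilde{\Mod_S^{\varphi,\rm u}}$ as required.

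The only delicate point in this program is the rational identification $\M_{\huaS}(\huaN)[1/p] \simeq \bigD(D)$ that is built into Kisin's functor $\theta$; once this comparison (supplied by \cite{Kis06}) is granted, the rest amounts to bookkeeping, combining Theorems \ref{mtoM}, \ref{etalequasi} and Proposition \ref{ThuaS} in turn.
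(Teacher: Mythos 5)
Your proof is correct and follows essentially the same route as the paper's: reduce to Kisin modules via the equivalence of Theorem \ref{mtoM}, use Proposition \ref{ThuaS} for lattices on the $\huaS$-side, and transfer unipotency between $\M$ and $V$ with Theorem \ref{etalequasi}. The only cosmetic difference is that you swap the roles of $\huaM$ and $\huaN$ in the essential surjectivity argument; the identification $\M_{\huaS}(\theta(D))[1/p] \simeq \bigD(D)$ (so that $\M$ really sits as a quasi-strongly divisible lattice inside $\bigD(D)$, with the two $\Fil^r$ structures matching) is Corollary 3.2.3 of \cite{Liu08}, exactly as the paper cites.
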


\begin{proof}
The proof of essential surjectivity of $T_{\rm cris}$ is almost verbatim as in Proposition 3.4.6 of \cite{Liu08}. Since the proof itself is useful in our following discussion, we will give a sketch (for full proof, see \cite{Liu08}): Let $V$ be a unipotent semi-stable Galois representation, $D=D_{\textnormal{st}}(V) \in \MFwa$, and $\D \in \bigMFwa$ the module corresponding to $D$. By Theorem \ref{Dtom}, let $\huaM =\theta(D)$, then $S_{K_0} \otimes_{\varphi, \huaS}\huaM \simeq \D$ by Corollary 3.2.3 of \cite{Liu08}. Given a $G_{\infty}$-stable $\Zp$-lattice $T$ in $V$, by Proposition \ref{ThuaS}, there exists $\huaN$ such that $T_{\huaS}(\huaN) \simeq T$. We can prove that $\huaM \otimes_{\Zp}\Qp \simeq \huaN \otimes_{\Zp}\Qp$. Now put $\bigN = \bigM_{\huaS}(\huaN)$, then $\bigN$ is a quasi-strongly divisible lattice in $\bigN\otimes_{\Zp}\Qp \simeq \M\otimes_{\Zp}\Qp=\D$. By Theorem \ref{etalequasi}, $\bigN$ is unipotent, so by Theorem \ref{mtoM}, $T_{\text{cris}}(\bigN) \simeq T_{\huaS}(\huaN) \simeq T$.

To prove full faithfulness, let $\bigM, \bigN \in \widetilde{\Mod_S^{\varphi,\rm u}}$, and suppose that there is a morphism $f: T_{\text{cris}}(\bigN) \to T_{\text{cris}}(\bigM)$. Since the representations are unipotent, by Theorem \ref{etalequasi}, both $\bigM$ and $\bigN$ are unipotent $S$-modules. By Theorem \ref{mtoM}, there are $\huaM, \huaN \in \Mod_{\huaS}^{\varphi}$ such that $\bigM = \bigM_{\huaS}(\huaM), \bigN=\bigM_{\huaS}(\huaN)$, and $T_{\huaS}(\huaM) =T_{\text{cris}}(\bigM), T_{\huaS}(\huaN) =T_{\text{cris}}(\bigN)$. Thus we have a morphism $f: T_{\huaS}(\huaN) \to T_{\huaS}(\huaM)$. By Proposition \ref{ThuaS}, there exists a map $\mathfrak{f} : \huaM \to \huaN$ which induces $f$. By Theorem \ref{mtoM}, $\bigM_{\huaS}(\mathfrak{f}): \bigM \to \bigN$ induces $f$ as well.
\end{proof}

\begin{rem}
Note that we actually do not need Lemma 3.4.7 in \cite{Liu08} to prove the full faithfulness.
\end{rem}

\begin{prop}\label{Tstff}
The functor $T_{\textnormal{st}}$ in Theorem \ref{mainthm} is fully faithful.
\end{prop}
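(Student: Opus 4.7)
The plan is to leverage Proposition \ref{Tcris} — which establishes the anti-equivalence between unipotent quasi-strongly divisible lattices and $\Zp[G_\infty]$-lattices in unipotent semi-stable representations — and reduce the remaining content, namely compatibility with the full $G_K$-action and with the monodromy $N$, to the rational theory of Theorems \ref{CF00} and \ref{bigD}. Faithfulness is immediate: if two morphisms $\mathfrak{f}_1,\mathfrak{f}_2\colon \bigM\to\bigN$ in $\Mod_S^{\varphi,N}$ yield the same map under $T_{\textnormal{st}}$, then restricting the $G_K$-action to $G_\infty$ identifies $T_{\textnormal{st}}$ with $T_{\textnormal{cris}}$, and Proposition \ref{Tcris} forces $\mathfrak{f}_1=\mathfrak{f}_2$.

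For fullness, let $\bigM,\bigN\in\Mod_S^{\varphi,N}$ be unipotent strongly divisible lattices of weight $p-1$ and let $f\colon T_{\textnormal{st}}(\bigN)\to T_{\textnormal{st}}(\bigM)$ be a morphism of $\Zp[G_K]$-modules. By Theorem \ref{etalequasi}, $\bigM$ and $\bigN$ are also unipotent when viewed as quasi-strongly divisible lattices. Restricting $f$ to $G_\infty$ and applying Proposition \ref{Tcris} will produce an $S$-linear morphism $\mathfrak{f}\colon\bigM\to\bigN$ compatible with $\Fil^r$ and $\varphi_r$. The only remaining issue is to verify that $\mathfrak{f}$ intertwines the monodromy operators $N_{\bigM}$ and $N_{\bigN}$.

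To check this, I will pass to the rational level. By Proposition \ref{strong}, $\bigM\otimes_{\Zp}\Qp\simeq\D_M$ and $\bigN\otimes_{\Zp}\Qp\simeq\D_N$ in $\bigMFwa$, with their $N$-operators extending those on the lattices. The morphism $f\otimes\Qp$ is a morphism of semi-stable $G_K$-representations; via Theorems \ref{CF00} and \ref{bigD} it corresponds to a unique morphism $g\colon\D_M\to\D_N$ in $\bigMFwa$, which by definition commutes with $\varphi$, $N$, and preserves filtration. On the other hand, the rationalization $\mathfrak{f}\otimes\Qp\colon\D_M\to\D_N$ induces $f\otimes\Qp$ on Galois representations under the comparison $T_{\textnormal{st}}(\bigM)\otimes\Qp=V_{\textnormal{st}}(\D_M)$ (and similarly for $\bigN$). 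The faithfulness half of Theorem \ref{CF00} then forces $\mathfrak{f}\otimes\Qp=g$, so $\mathfrak{f}\otimes\Qp$ commutes with $N$, and restricting back to the lattices $\bigM\subset\D_M$ and $\bigN\subset\D_N$ gives the desired compatibility for $\mathfrak{f}$.

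The delicate point — which I expect to be the main obstacle — is the identification $\mathfrak{f}\otimes\Qp=g$. One must verify that these two morphisms of filtered $\varphi$-modules over $S_{K_0}$ arise from the common $G_K$-equivariant map $f\otimes\Qp$ under compatible comparison isomorphisms. Concretely, this requires tracking Kisin's functor $\theta$ used inside the proof of Proposition \ref{Tcris} together with the comparison between $T_{\textnormal{st}}$ and $V_{\textnormal{st}}$ provided by Proposition \ref{strong}, and checking that the diagram relating $V_{\textnormal{st}}(\D_{?})$, $T_{\textnormal{st}}(\bigM_{?})\otimes\Qp$ and $T_{\huaS}(\theta(D_{?}))\otimes\Qp$ is functorial in morphisms of the underlying Galois representations. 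Once this compatibility is recorded, the monodromy compatibility of $\mathfrak{f}$ is automatic and the proof concludes.
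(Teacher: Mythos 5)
Your approach matches the paper's, which simply cites Corollary~3.5.2 of \cite{Liu08} and notes that the proof goes through verbatim once Proposition~\ref{Tcris} replaces the corresponding statement there. Your reduction — faithfulness from restricting to $G_\infty$, fullness by producing the morphism $\mathfrak{f}$ of quasi-strongly divisible lattices from Proposition~\ref{Tcris} and then rationalizing to check $N$-compatibility — is exactly the content of that cited argument.

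One small imprecision worth flagging: you invoke ``the faithfulness half of Theorem~\ref{CF00}'' to conclude $\mathfrak{f}\otimes\Qp = g$, but Theorem~\ref{CF00} lives in the category of filtered $(\varphi,N)$-modules, and at the moment of comparison $\mathfrak{f}\otimes\Qp$ is not yet known to be a morphism there (that is what you are trying to prove). What one really uses is faithfulness of the functor to $G_\infty$-representations on the $\varphi$-module level with no $N$ — i.e.\ faithfulness of $T_{\huaS}$ (Proposition~\ref{ThuaS}) or equivalently of $T_{\textnormal{cris}}$ on the rational category, combined with the fact that Kisin's $\theta$ is compatible with $\M_{\huaS}$ and the comparison isomorphisms. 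Both $g$ (with $N$ forgotten) and $\mathfrak{f}\otimes\Qp$ are morphisms of $\varphi$-modules over $S_{K_0}$ inducing the same $G_\infty$-morphism of representations, so they coincide; it then follows a posteriori that $\mathfrak{f}\otimes\Qp$ commutes with $N$. You essentially describe this in your last paragraph (``these two morphisms of filtered $\varphi$-modules over $S_{K_0}$\ldots''), so the gap is one of attribution rather than substance.
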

\begin{proof}
This is the $r=p-1$ unipotent case generalization of Corollary 3.5.2 of \cite{Liu08}. The proof is verbatim as in  \cite{Liu08} because Proposition \ref{Tcris} is the $r=p-1$ unipotent case generalizations of Proposition 3.4.6 of  \cite{Liu08}.
\end{proof}

\subsection{Stability of monodromy and essential surjectivity of $T_{\textnormal{st}}$}

Now, for any $G_K$-stable $\Zp$-lattice in a unipotent semi-stable Galois representation $V$, by Proposition \ref{Tcris}, there exists a quasi-strongly divisible lattice $\bigM$ in $\bigD$ such that $T_{\textnormal{cris}}(\bigM) = T|_{G_{\infty}}$. By Proposition 3.5.1 of \cite{Liu08}, if $N(\bigM) \subseteq \bigM$, then $(\bigM, \Fil^r \bigM, \varphi, N)$ is a strongly divisible lattice in $\bigD$ and $T_{\textnormal{st}}(\bigM)=T$. Thus in order to prove the essential surjectivity of $T_{\textnormal{st}}$, it remains to show that $\bigM$ is also $N$-stable, i.e., $N(\M) \subseteq \M$.

When $p>2$, Proposition 2.4.1 of \cite{Liu12} will prove this ``monodromy stability", hence the essential surjectivity of $T_{\textnormal{st}}$. But for $p=2$, we need a separate treatment by using the strategy in \cite{Bre02}.

\subsubsection{The case $p>2$}

Here, we give a strengthening (Theorem \ref{NSigma}) of the stability result proved in \cite{Liu12}, showing that the coefficients of the monodromy operator can in fact be put in $\Sigma$. We will first give a brief sketch of the the proof in \cite{Liu12}, and only introduce notations that are useful to prove our result.

Let
$$I^{[n]}B_{\text{cris}}^{+} := \{ x \in  B_{\text{cris}}^{+}: \varphi^k(x) \in \Fil^n B_{\text{cris}}^{+}, \text{ for all } k>0  \},$$
and for any subring $A \subseteq B_{\text{cris}}^{+}$, write $I^{[n]}A:= A\cap I^{[n]}B_{\text{cris}}^{+}$. By the argument preceding Theorem 2.2.1 of \cite{Liu12}, there exists a nonzero $\huat \in W(R)$ such that $t= \lambda \varphi(\huat)$ with $\lambda = \Pi_{k=1}^{\infty} \varphi^k(\frac{c_0^{-1}E(u)}{p}) \in S^{\times}$ where $c_0 =\frac{E(0)}{p}$, here $t = \log([\underline{\epsilon}])$. For all $n$, $I^{[n]}W(R)$ is a principle ideal (cf. Proposition 6.18 of \cite{FO}), and by the proof of Lemma 3.2.2 in \cite{Liu10}, $(\varphi(\huat))^n$ is a generator of $I^{[n]}W(R)$.

By Lemma 5.1.1 of \cite{Liu08}, there is a $G$-action on $B_{\text{cris}}^{+} \otimes_{S_{K_0}} \D$ defined by,
$$ \sigma(a\otimes x) = \sum_{i=0}^{\infty} \sigma(a) \gamma_{i}(-\log(\underline{\epsilon}(\sigma))) \otimes N^i (x), \forall \sigma \in G, a\in B_{\text{cris}}^{+}, x \in \D,$$
where $\underline{\epsilon}(\sigma) = \frac{\sigma([\underline{\pi}])}{[\underline{\pi}]}$. By Lemma 2.3.1 of \cite{Liu12},  $W(R) \otimes_S \M$ is $G$-stable with the induced $G$-action on $B_{\text{cris}}^{+} \otimes_{S_{K_0}} \D$.

\begin{proof}[Proof of the Main Theorem \ref{mainthm} for $p>2$.]

Now for our $G_{K}$-stable $\Zp$-lattice $T$, by Proposition \ref{Tcris}, there
exists $\M$ such that $T_{\text{cris}}(\M)=T|_{G_{\infty}}$. Since we are in a unipotent
representation, there exists a unipotent $\huaM$ such that $\M =S\otimes_{\varphi, \huaS}\huaM$,
and $T_{\huaS}(\huaM) =T_{\text{cris}}(\M)$. In order to prove $\M$ is $N$-stable, it suffices to
prove that $N(\huaM) \subseteq \M$ (regard $\huaM$ as a $\varphi(\huaS)$-submodule of $\M$).
Now the proof goes the same as Proposition 2.4.1 of \cite{Liu12}. Since the proof will be used in Theorem \ref{NSigma}, we give a brief sketch.

Recall that we can choose $\tau$ a topological generator of $G_{p^{\infty}}$ such that  $-\log([\underline{\epsilon}(\tau)]) = t$ (Note that the existence of $\tau$ relies on the fact $p>2$). And it is easily shown that for $x \in \huaM$,
$$ (\tau -1)^n(x) = \sum_{m=n}^{\infty} ( \sum_{i_1+\ldots+i_n=m, i_j \geq 1} \frac{m!}{i_1!\ldots i_n!} ) \gamma_m(t)\otimes N^m(x) .$$

Now $(\tau -1)^n(x) \in I^{[n]}B_{\text{cris}}^{+}\otimes_S \D$, and since $W(R) \otimes_{\varphi, \huaS} \huaM = W(R) \otimes_S \M$ is $G$-stable, $(\tau -1)^n(x) \in I^{[n]}W(R)\otimes_{\varphi, \huaS}\huaM$.
We then can show that $\frac{(\tau -1)^n}{nt}(x)$ is well defined in $A_{\text{cris}}\otimes_S \M$ for all $n$, and  $\frac{(\tau -1)^n}{nt}(x) \to 0$. Then we have $$1\otimes N(x)= \sum_{n=1}^{\infty} (-1)^{n-1} \frac{(\tau -1)^n}{nt}(x),$$ and the infinite sum converges in $A_{\text{cris}}\otimes_S \M$, so $N(x) \in \M$.

\end{proof}

With the above result proved, we can in fact show that the coefficients of $N$ can be put into $\Sigma$.

\begin{thm} \label{NSigma}
Suppose $p >2$, then we can choose a basis of $\M$ such that the matrix of $N$ with respect to the basis is in $\Mat_d(\Sigma)$. Indeed, we have $N(\huaM) \subset \Sigma \otimes_{\varphi, \huaS} \huaM$.
\end{thm}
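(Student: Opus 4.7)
The plan is to sharpen the convergence analysis of the preceding proof, which already established $N(\huaM)\subseteq\M$ for $p>2$ via the series
$$1\otimes N(x)=\sum_{n=1}^{\infty}(-1)^{n-1}\frac{(\tau-1)^n}{nt}(x)$$
converging in $A_{\textnormal{cris}}\otimes_S\M$. Since any basis of $\huaM$ yields a basis of $\M$ via $m\mapsto 1\otimes m$, the matrix claim follows from the inclusion $N(\huaM)\subseteq\Sigma\otimes_{\varphi,\huaS}\huaM$, which is what I would try to prove.

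First I would set up two preparatory facts. By Theorem \ref{mtoM} applied to the unipotent $\huaM$ there is $M\in\Mod_{\Sigma}^{\varphi}$ with $M\otimes_{\Sigma}S=\M$, and unwinding the construction identifies $M$ with the submodule $\Sigma\otimes_{\varphi,\huaS}\huaM\subseteq\M$; in particular $M$ is $\varphi$-stable and $p$-adically closed in $\M$, the latter because $S=\Sigma+\Fil^{p+1}S$ together with the $\huaS$-freeness of $\huaM$ forces $\M/M$ to be $p$-torsion free in the relevant sense. Second, the unit $\lambda=\prod_{k\ge 1}\varphi^k(c_0^{-1}E(u)/p)\in S^{\times}$ lies in fact in $\Sigma^{\times}$, since $\varphi(\Sigma)\subseteq\Sigma$ and each factor is already a unit in $\Sigma$; this lets one manipulate the ``denominator'' $t=\lambda\varphi(\huat)$ of the series at the $\Sigma$-level.

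The core step is to show that each partial sum $\theta_n(x):=\sum_{k=1}^{n}(-1)^{k-1}\frac{(\tau-1)^k}{kt}(x)$ already lies in $M$. Expanding $\tau$ using the formula of Lemma 5.1.1 of \cite{Liu08} and the identity $-\log[\underline{\epsilon}(\tau)]=t$ gives
$$(\tau-1)^n(x)=\sum_{m\ge n}n!\,S(m,n)\,\gamma_m(t)\otimes N^m(x),$$
where $S(m,n)$ is a Stirling number of the second kind and $N^m(x)\in\M$ by the derivation property applied to the already-proved $N(\huaM)\subseteq\M$. The individual terms involve $\gamma_m(t)/t$, which does not lie in $S$, so one cannot argue termwise; instead I would prove $\theta_n(\huaM)\subseteq M$ by induction on $n$, using the commutation $N\varphi=p\varphi N$ and the $\varphi$-stability of $M$ to rewrite the difference $\theta_n-\theta_{n-1}$ as an element of $M$. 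The $p$-adic closedness of $M$ in $\M$ then passes the inclusion to the limit, yielding $N(\huaM)\subseteq M=\Sigma\otimes_{\varphi,\huaS}\huaM$.

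The main obstacle will be the inductive step of the last paragraph. Closure and $\varphi$-stability of $M$ are essentially formal, but extracting a $\Sigma$-expression from the Stirling-number expansion of $(\tau-1)^n(x)$ requires matching the $S$-coordinates of each term against the $\Sigma$-portion of $S=\Sigma+\Fil^{p+1}S$ in a way compatible with all $n$. Concretely, this amounts to a delicate bookkeeping of powers of $\varphi(\huat)$ and iterated use of $N\varphi=p\varphi N$ to push divided powers of $t$ through into $\varphi$-iterates of elements of $M$; this is the technical heart of the monodromy-stability argument of \cite{Liu12}, tracked at the $\Sigma$-level rather than the $A_{\textnormal{cris}}$-level.
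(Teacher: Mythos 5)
Your plan has two genuine gaps, and the second one is fatal.

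First, the ``$p$-adic closedness'' of $M=\Sigma\otimes_{\varphi,\huaS}\huaM$ in $\M=S\otimes_{\varphi,\huaS}\huaM$ is not automatic and you do not justify it. The relevant quotient is $(S/\Sigma)\otimes_{\varphi,\huaS}\huaM$, and $S/\Sigma$ has nontrivial $p$-torsion of unbounded order: for instance $\gamma_{p^2}(E(u))=\frac{p^{p}}{(p^{2})!}Y^{p}$ has $v_p$ of the coefficient equal to $-1$ (so it is not in $\Sigma$), yet $p\,\gamma_{p^2}(E(u))\in\Sigma$. So $p^k S\cap\Sigma\supsetneq p^k\Sigma$, the $p$-adic topology on $\Sigma$ does not agree with the subspace topology from $S$, and closedness is a real issue rather than the ``essentially formal'' fact you take it for. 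Your appeal to $S=\Sigma+\Fil^{p+1}S$ does not force $p$-torsion-freeness of the quotient; that decomposition is not a direct sum of $\Sigma$-modules.

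Second, and more seriously, the inductive step cannot work as stated, because the partial sums $\theta_n(x)$ are \emph{not} in $\M$, let alone in $M$. The difference $\theta_n(x)-\theta_{n-1}(x)=(-1)^{n-1}\frac{(\tau-1)^n}{nt}(x)$ is exactly one term of the series, which only lives in $A_{\rm cris}\otimes_{\varphi,\huaS}\huaM$ and not in $S\otimes_{\varphi,\huaS}\huaM$; you yourself observe that the individual terms involve $\gamma_m(t)/t\notin S$, which is precisely why one ``cannot argue termwise.'' But an induction showing $\theta_n(\huaM)\subseteq M$ for all $n$ \emph{is} a termwise argument, since each inductive step must put a single term into $M$. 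The $\varphi$-stability of $M$ and the relation $N\varphi=p\varphi N$ cannot repair this, because the obstruction lives inside $A_{\rm cris}$ and has nothing to do with the $\huaM$-factor.

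The paper's actual argument avoids partial sums entirely. It fixes a $\huaS$-basis of $\huaM$, so the matrix entries $y$ of $N$ are elements of $S$ (by Liu's result), and then extracts from the same series the numerical constraint $\lambda p^k y\in W(R)+\Fil^{p^k+1}A_{\rm cris}$ for all $k\geq 1$ (by splitting the series at $n=p^k+1$, using that $I^{[n]}W(R)$ is generated by $\varphi(\huat)^n$ and $t=\lambda\varphi(\huat)$). The passage from these constraints to $\lambda y\in\Sigma$ is then a purely algebraic analysis of the divided-power expansion $\lambda y=\sum_i a_i\frac{E(u)^i}{i!}$, carried out with a dedicated Lemma \ref{LemmaEU} about when $aE(u)^n\in W(R)+\Fil^{n+1}A_{\rm cris}$ forces $a\in W(k)[u]$. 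Nothing in your proposal engages this step, and it is the genuinely new content of Theorem \ref{NSigma} beyond Liu's $N(\huaM)\subseteq\M$.
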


We start with two lemmas.

\begin{lemma} \label{Liulemma}
$x \in A_{\textnormal{cris}}$, $j \geq i \geq 0$, $E(u)^i x \in \Fil^j \Acris$, then $x \in \Fil^{j-i}\Acris$.
\end{lemma}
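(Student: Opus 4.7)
The plan is to reduce the statement about $E(u)$ to the analogous assertion about powers of $\xi=[\varpi]+p$ and then interpret the filtration of $\Acris$ inside $\BdR^{+}$.

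First I would show that $E(u)$ and $\xi$ differ by a unit in $\Acris$. Since $E(u)\in W(k)[u]$ is Eisenstein of degree $e$, its image $E([\underline{\pi}])\in W(R)$ is killed by $\theta$ (because $\theta([\underline{\pi}])=\pi$ and $E(\pi)=0$), so $E([\underline{\pi}])=\xi\cdot c$ for some $c\in W(R)$. Reducing modulo $p$ shows that $c\bmod p = [\underline{\pi}]^{e}/[\varpi]$ in $R$, an element of valuation $0$, hence a unit in $R$; so $c\in W(R)^{\times}\subseteq\Acris^{\times}$. Therefore the hypothesis $E(u)^{i}x\in\Fil^{j}\Acris$ is equivalent to $\xi^{i}x\in\Fil^{j}\Acris$, and it suffices to prove the lemma with $\xi$ in place of $E(u)$.

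Next I would invoke the identification
\[
\Fil^{j}\Acris \;=\; \Acris\cap \xi^{j}\BdR^{+},
\]
which is immediate from $\BdR^{+}=\underleftarrow{\lim}\,W(R)[1/p]/(\xi)^{n}$ and $\Fil^{j}\BdR^{+}=\xi^{j}\BdR^{+}$, together with the canonical embedding $\Acris\hookrightarrow\BdR^{+}$. Given $\xi^{i}x\in\Fil^{j}\Acris\subseteq\xi^{j}\BdR^{+}$, the invertibility of $\xi$ in $\BdR=\BdR^{+}[1/\xi]$ allows us to divide by $\xi^{i}$ and conclude $x\in\xi^{j-i}\BdR^{+}$. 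Combining this with the assumption $x\in\Acris$ gives
\[
x\;\in\; \Acris\cap\xi^{j-i}\BdR^{+} \;=\; \Fil^{j-i}\Acris,
\]
which is the desired conclusion.

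There is essentially no obstacle: the one point that deserves care is writing down the unit $c=E([\underline{\pi}])/\xi$ and checking that it is genuinely a unit in $\Acris$ (rather than merely in $W(R)[1/p]$), and then being explicit that the two descriptions of $\Fil^{j}\Acris$ agree. Both are standard facts from the foundations of $p$-adic Hodge theory reviewed at the beginning of the paper, so the proof is short once the reduction $E(u)\leftrightarrow\xi$ is in place.
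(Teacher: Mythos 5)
Your argument is correct and matches the proof of the cited Lemma 3.2.2 in \cite{Liu08}: reduce $E(u)$ to the canonical generator $\xi$ of $\Ker\theta$ by checking $E([\underline{\pi}])/\xi$ is a unit in $W(R)$, invoke $\Fil^{j}\Acris = \Acris\cap\xi^{j}\BdR^{+}$, and divide by $\xi^{i}$ inside $\BdR$. One caution: the identification $\Fil^{j}\Acris = \Acris\cap\xi^{j}\BdR^{+}$ is \emph{not} ``immediate'' when $\Fil^{j}\Acris$ is the divided-power filtration (the one that makes $\Acris$ an object of $'\Mod_{S}^{\varphi}$, which is what matters here) --- its agreement with the filtration induced from $\BdR^{+}$ is a foundational theorem of Fontaine, which Liu cites explicitly; you flag this in your last paragraph, but it should be presented as a cited result, not a formality.
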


\begin{proof}
This is Lemma 3.2.2 of \cite{Liu08}.
\end{proof}

\begin{lemma} \label{LemmaEU}
Let $a \in K_0[u], a \neq 0, \deg(a)<e$. If $aE(u)^n \in W(R) + \Fil^{n+1}\Acris$ for some $n \geq 0$, then $a \in W(k)[u]$.
\end{lemma}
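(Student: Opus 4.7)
The plan is to assume $a$ carries a $p$-adic denominator and derive a contradiction. Write $a = p^{-k}a'$ with $a' \in W(k)[u]$, $\deg a' < e$, and (if $k\geq 1$) $a' \notin pW(k)[u]$; the goal is to rule out $k\geq 1$. Under this assumption the hypothesis rewrites as $a'E(u)^n = p^k w + p^k y$ with $w \in W(R)$ and $y \in \Fil^{n+1}\Acris$, so the element $p^k y = a'E(u)^n - p^k w$ sits in $W(R)\cap \Fil^{n+1}\Acris$.

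Next I would record two identities in $W(R)$: (i) $W(R)\cap \Fil^{n+1}\Acris = \xi^{n+1}W(R)$, and (ii) $W(R)/\xi^n W(R)$ is $p$-torsion free. For (i), any $z$ in the intersection has $\theta(z)=0$, so $z \in \xi W(R)$; writing $z = \xi z_1$, Lemma \ref{Liulemma} (together with $E([\underline{\pi}]) = c\xi$ for a unit $c \in W(R)^\times$) gives $z_1 \in \Fil^n\Acris$, and iterating yields (i). For (ii), if $pv \in \xi^n W(R)$, reducing modulo $p$ gives $\varpi^n \bar v = 0$ in the valuation domain $R = W(R)/pW(R)$ (using $\xi \equiv \varpi \pmod p$), so $v \in pW(R)$, and the $p^k$-case follows by induction. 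Combining these with $E(u)^n = c^n\xi^n$, I would write $p^k y = \xi^{n+1}z$ with $z \in W(R)$ and rearrange to $p^k w = \xi^n(a' c^n - \xi z)$; then (ii) forces $\xi^n \mid w$, and canceling the non-zerodivisor $\xi^n$ gives $a' c^n = p^k w'' + \xi z$, i.e.\ $a' \in p^k W(R) + \xi W(R)$.

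Finally, applying $\theta$ to this last relation yields $a'(\pi) \in p^k\bigO_C \cap K$. Because $E(u)$ is Eisenstein, $\bigO_K = W(k)[\pi] = \bigoplus_{i=0}^{e-1} W(k)\pi^i$, and hence $p^k\bigO_K = \bigoplus_{i=0}^{e-1} p^k W(k)\pi^i$ with $p^k\bigO_C \cap K = p^k\bigO_K$. Writing $a' = \sum_{i=0}^{e-1}\alpha_i u^i$ with $\alpha_i \in W(k)$, the $W(k)$-basis expansion of $a'(\pi) = \sum_i \alpha_i \pi^i$ forces each $\alpha_i \in p^k W(k) \subseteq pW(k)$, so $a' \in pW(k)[u]$, contradicting the choice of $a'$. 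The main technical obstacle is establishing the pair of auxiliary identities (i) and (ii): (i) is a direct iteration of Lemma \ref{Liulemma}, while (ii) is the observation that $(\xi,p)$ is a regular sequence in $W(R)$ thanks to the perfect valuation-domain structure of $R$.
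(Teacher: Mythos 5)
Your proposal is correct in substance and its core engine is the same as the paper's, but the organization is genuinely different and worth contrasting. The paper clears the denominator $p^s$ once and for all, then inducts on $n$ directly on the relation $bE(u)^n = p^s x + p^s y$: at each step it applies $\theta$ to deduce $\theta(x)=0$, writes $x=E(u)x_1$, and invokes Lemma \ref{Liulemma} to drop the exponent by one, terminating at $n=0$ with a valuation estimate ($v_p(b(\pi))\geq s$, contradicting $v_p(b(\pi))<1$ since some coefficient of $b$ is a unit). You instead isolate two reusable structural facts: (i) $W(R)\cap\Fil^{n+1}\Acris=\xi^{n+1}W(R)$ and (ii) $W(R)/\xi^nW(R)$ is $p$-torsion free, i.e.\ $(\xi,p)$ is a regular sequence. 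The iteration of Lemma \ref{Liulemma} is now packaged inside (i), $\theta$ is applied only once at the very end, and the contradiction is extracted from the $W(k)$-module decomposition $\bigO_K=\bigoplus_{i<e}W(k)\pi^i$ rather than from a direct valuation bound; these two endgames are equivalent but yours is slightly more structural. One slip in the writeup: to prove (ii), from $pv=\xi^n w$ reduction modulo $p$ gives $\varpi^n\bar w=0$, not $\varpi^n\bar v=0$; the correct conclusion is $w\in pW(R)$, and then cancelling $p$ (using that $W(R)$ is $p$-torsion free) gives $v\in\xi^nW(R)$. (Equivalently, reduce modulo $\xi$ rather than modulo $p$ and induct on the power of $\xi$.) The regular-sequence framing you cite at the end is exactly right; only the variable got swapped.
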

\begin{proof}
Suppose otherwise, $a = \frac{b}{p^s}$, where $s \geq 1$, and $b \in W(k)[u]$ of degree less than $e$ and at least one of the coefficients of $b$ is a unit in $W(k)$. Thus $v_p(b(\pi))<1$. Suppose $aE(u)^n = x+y$, where $x \in W(R), y \in \Fil^{n+1}\Acris$. If $n=0$, then $b-p^sx=p^sy$. Apply the homomorphism $\theta: \Acris \to \mathcal{O}_C$, since $p^sy$ maps to 0, $b(\pi)=\theta(b)=p^s\theta(x)$, $v_p(b(\pi)) \geq s \geq 1$, contradiction. If $n >0$, then apply the homomorphism $\theta$ to $bE(u)^n =p^sx+p^sy$, so $\theta(x)=0$. Since $\textnormal{Ker}(\theta \mid_{W(R)})$ is principle, so $x=E(u)x_1$ with $x_1 \in W(R)$, and $E(u)(bE(u)^{n-1} - p^sx_1) \in \Fil^{n+1}\Acris$. Since $bE(u)^{n-1} - p^sx_1 \in \Acris$,  by Lemma \ref{Liulemma}, $bE(u)^{n-1} - p^sx_1 \in \Fil^{n}\Acris$. Now an induction on $n$ will show us the result.
\end{proof}

\newcommand{\floor}[1]{\lfloor{#1}\rfloor}

\begin{proof}[Proof of Theorem \ref{NSigma}]

Fix a $\huaS$-basis $(e_1, \ldots, e_d)$ of $\huaM$. For any $x \in \huaM$ as in the proof of Theorem \ref{mainthm} for $p>2$, we have
$$1\otimes N(x)= \sum_{n=1}^{\infty} (-1)^{n-1} \frac{(\tau -1)^n}{nt}(x) =  \sum_{n=1}^{\infty} (\sum_{i=1}^d a_{n,i}(x)e_i).$$

Here, $a_{n,i}(x) \in A_{\rm cris}$ depends on $x$, but in the following, we simply use $a_{n,i}$ to denote them.

Since $(\tau -1)^n(x) \in I^{[n]}W(R)\otimes_{\varphi, \huaS}\huaM$, if we multiply $1\otimes N(x)$ by $\lambda p^k$ ($k \geq 1$), then the first $p^k$ terms in the sum (i.e., $\lambda p^k (-1)^{n-1} \frac{(\tau -1)^n}{nt}(x)$ for $1 \leq n \leq p^k$) will have coefficients in $W(R)$, namely, $\lambda p^ka_{n,i} \in W(R), 1 \leq n \leq p^k$. In fact, the $(p^k+1)$-th term will also have coefficients in $W(R)$ since $(p^k+1)$ in the denominator is a unit, i.e., $\lambda p^ka_{p^k+1,i} \in W(R)$.
For the remaining terms, $\sum_{n=p^k+2}^{\infty} (-1)^{n-1} \frac{(\tau -1)^n}{nt}(x) \in \Fil^{p^k+1}B_{\textnormal{cris}}^{+} \otimes_{\varphi, \huaS}\huaM$ because $t^n \in \Fil^n B_{\textnormal{cris}}^{+}$. But it is also in $A_{\textnormal{cris}}\otimes_{\varphi, \huaS}\huaM$ by the proof of Theorem \ref{mainthm}, so these terms have coefficients in $\Fil^{p^k+1} A_{\rm{cris}}$, i.e, $\lambda p^ka_{n,i} \in \Fil^{p^k+1} A_{\rm{cris}}, n \geq p^k+2$.

Thus for any entry $y$ in the matrix of $N$, we conclude

\begin{equation}
 \lambda p^ky \in W(R)+ \Fil^{p^k+1} A_{\textnormal{cris}}, \forall k \geq 1. \label{seq} \tag{$\ast$}
\end{equation}

Note that we already have $\lambda y \in S$, and in the following we will show that $\lambda y \in \Sigma$.

Let $\lambda y = \displaystyle\sum_{i=0}^{\infty} a_i \frac{E(u)^i}{i!}$ as an element of $S$, where $a_i \in W(k)[u]$ of degree less than $e$ and $v_p(a_i) \to 0$. We claim that we have $\frac{p^ka_i}{i!} \in W(k)[u], \forall i \leq p^k$.
To prove the claim, fix any $k$, and do an induction on $i$. It is clearly true for $i=0$. Suppose it is true for $ \leq i$. Now for $i+1$, by (\ref{seq}), we have (note that $i+2 \leq p^k+1$),

$$\frac{p^ka_{i+1}}{(i+1)!}E(u)^{i+1} =p^k\lambda y - p^k(\sum_{j=0}^{i} a_j \frac{E(u)^j}{j!} + \sum_{j=i+2}^{\infty} a_j \frac{E(u)^j}{j!})  \in W(R) + \Fil^{i+2} A_{\text{cris}},$$
now apply Lemma \ref{LemmaEU} and the claim is proved.

So we have

$$\lambda y = \sum_{i=0}^{p^2-1}a_i \frac{E(u)^i}{i!}  +  a_{p^2} \frac{E(u)^{p^2}}{p^2!} +  \sum_{k=3}^{\infty} \sum_{i=p^{k-1}+1}^{p^k} a_i \frac{E(u)^i}{i!} .$$

It is easy to verify that $\displaystyle\sum_{i=0}^{p^2-1}a_i\frac{E(u)^i}{i!} \in \Sigma$.
And since $\frac{p^2a_{p^2}}{p^2!} \in W(k)[u]$ by what we proved above,  $a_{p^2} \frac{E(u)^{p^2}}{p^2!} \in \Sigma$.

For the third part in the sum, for any $p^{k-1}+1 \leq i \leq p^k$,
$$a_i \frac{E(u)^i}{i!} =  b_i \frac{E(u)^i}{p^k} = c_i (\frac{E(u)^p}{p})^{\floor{\frac{i}{p}}} E(u)^{i-p\floor{\frac{i}{p}}},  $$

here $b_i= \frac{p^ka_i}{i!} \in W(k)[u]$, and $c_i =b_i p^{\floor{\frac{i}{p}}-k} \in W(k)[u]$ because $\floor{\frac{i}{p}}-k \geq 0, \forall k \geq 3, i >p^{k-1}$. Clearly $v_p(c_i) \to 0$, thus the third part is also in $\Sigma$.

All three parts are in  $\Sigma$, thus $\lambda y \in \Sigma$, so $y \in \Sigma$ because $\lambda$ is a unit in $\Sigma$, and we are done.
\end{proof}

\subsubsection{The case $p=2$}

For $p=2$, we will utilize the strategy in Subsection 3.5 of \cite{Bre02}. We first collect some results from \cite{Bre02}.

\begin{lemma} \label{collect}
Let $p$ be a prime. Let $(\rho, V, D)$ be a semi-stable representation with Hodge-Tate weights in $\{0, 1\}$, where $D=D_{\textnormal{st}}(V)$. Let $D'$ be the filtered $\varphi$-module by forgetting the $N$-structure in $D$. $\D = S \otimes D$, $\D' = S\otimes D'$. Let $\M \subset \D$ be a strongly divisible lattice. Let $\M'$ be the image of $\M$ under the identification $\D \simeq \D'$.
\begin{enumerate}
\item $D'$ is also weakly admissible.
\item $V_{\textnormal{st}}(D') \simeq V_{\textnormal{st}}(D)$ as vector spaces.
\item $\M'$ is stable under $N_{\D'} = N_S \otimes 1$, thus $\M'$ is a strongly divisible lattice in $\D'$.
\item Under the identification $V_{\textnormal{st}}(D') \simeq V_{\textnormal{st}}(D)$, the lattice $ T'= T_{\textnormal{st}}(\M')$ corresponds to $T=T_{\textnormal{st}}(\M)$.
\end{enumerate}
\end{lemma}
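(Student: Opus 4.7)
My plan is to verify the four statements by exploiting the restricted structure of weakly admissible filtered $(\varphi, N)$-modules with Hodge-Tate weights in $\{0, 1\}$, following Subsection 3.5 of \cite{Bre02}.

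The crucial preliminary observation is that for any weakly admissible $D$ with Hodge-Tate weights in $\{0, 1\}$, all Newton slopes of $\varphi$ lie in $[0, 1]$, and consequently $N_D^2 = 0$. The lower slope bound follows by applying weak admissibility to the smallest-slope piece of the Dieudonn\'e--Manin decomposition, which is automatically $N$-stable since $N$ shifts slopes down by $1$; the upper bound is then immediate from Cartier duality. With this in hand, part (1) follows: $t_H(D') = t_H(D) = t_N(D) = t_N(D')$ because $D$ and $D'$ coincide as filtered $\varphi$-modules, and for any $\varphi$-stable $K_0$-subspace $D_1 \subseteq D'$ I would form the $(\varphi, N)$-saturation $\tilde{D}_1 := \sum_{i \geq 0} N^i(D_1)$, apply $t_H(\tilde{D}_1) \leq t_N(\tilde{D}_1)$ from weak admissibility of $D$, and use the slope bound (applied to $\tilde{D}_1/D_1$, which inherits slopes in $[0, 1]$) to descend the inequality to $D_1$. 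Part (2) is then immediate from Theorem \ref{CF00}: both $V_{\textnormal{st}}(D)$ and $V_{\textnormal{st}}(D')$ have $\Qp$-dimension $\dim_{K_0} D$, so the natural injection $V_{\textnormal{st}}(D') \hookrightarrow V_{\textnormal{st}}(D)$ induced by the identity on the underlying filtered $\varphi$-module is an isomorphism by dimension count.

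For (3), writing $N_{\D} = N_S \otimes 1 + 1 \otimes N_D$ and using $N_\D(\M) \subseteq \M$, the condition $(N_S \otimes 1)(\M) \subseteq \M$ is equivalent to $(1 \otimes N_D)(\M) \subseteq \M$. I would verify this by exploiting $N_D^2 = 0$ together with the relation $N_D \varphi_D = p \varphi_D N_D$ and the fact that $\M$ is generated over $S$ by $\varphi_r(\Fil^r \M)$ with $r=1$, running a $p$-adic successive-approximation argument (similar in spirit to the iterations in the proofs of Theorems \ref{SigmatoS} and \ref{mtoM}) to see that $(1 \otimes N_D)(x)$ lies in $\M + p^n \D$ for every $n$ and every $x \in \M$. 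Part (4) then follows formally: once (3) is in place, the identity $\M \to \M'$ is a morphism of strongly divisible lattices, so by functoriality of $T_{\textnormal{st}}$ and the identification from (2), $T$ and $T'$ correspond. The main obstacle is part (3): splitting off the $1 \otimes N_D$ contribution from $N_\D$-stability demands the full force of the HT $\{0, 1\}$ constraint, since without the slope bound (and the consequent vanishing $N_D^2 = 0$) there is no reason to expect either summand of $N_\D$ to individually preserve $\M$.
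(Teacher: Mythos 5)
The paper's proof of this lemma is a bare citation to Lemmas 3.5.1, 3.5.3 and 3.5.6 of \cite{Bre02}, together with the remark that Breuil's arguments also go through for $p=2$; the paper itself supplies no argument. Your proposal tries to reconstruct the proof, and you correctly isolate the engine (slopes in $[0,1]$, hence $N_D^2=0$). Your plan for (1) is essentially right, though the descent step needs a sharper input than ``$\tilde{D}_1/D_1$ inherits slopes in $[0,1]$'': that only gives $t_N(\tilde{D}_1/D_1)\geq 0$, which is the wrong direction. What you actually need is $t_N(\tilde{D}_1/D_1)=0$, which follows from the finer fact that $N_D$ lands in the slope-$0$ isocrystal piece, so $\tilde{D}_1/D_1$ is isoclinic of slope $0$.

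Parts (2) and (3) have real gaps. For (2), there is no injection $V_{\textnormal{st}}(D')\hookrightarrow V_{\textnormal{st}}(D)$ ``induced by the identity on the underlying filtered $\varphi$-module'': an element $f$ of $V_{\textnormal{st}}(D')$ satisfies $N\circ f=0$ (it takes values in $B_{\textnormal{cris}}$), while an element of $V_{\textnormal{st}}(D)$ must satisfy $N\circ g=g\circ N_D$, and these conditions are mutually inconsistent for one and the same map unless $f$ happens to kill $N_D(D)$. The natural candidate correction $g=f-\log[\underline\pi]\cdot(f\circ N_D)$ fixes the $N$-compatibility (using $N_D^2=0$) but takes $\Fil^i$ into $\Fil^{i-1}$ rather than $\Fil^i$, so it is not obviously in $V_{\textnormal{st}}(D)$ either; the bijection Breuil constructs is more specific, and it must be (4) depends on having it nailed down, since ``corresponds under the identification'' is meaningless otherwise. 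For (3), which you yourself call the main obstacle, ``running a $p$-adic successive-approximation argument'' is a hope, not a proof: you do not say what quantity is being approximated, what bounds $N_D^2=0$ and $N\varphi=p\varphi N$ are supposed to produce, or why the scheme converges, and this is precisely the content of Breuil's Lemma 3.5.3 that the paper leans on. As written, the proposal is a plausible but incomplete sketch rather than a proof.
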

\begin{proof}
These are summarized from Lemma 3.5.1, Lemma 3.5.3 and Lemma 3.5.6 of \cite{Bre02}. They were proved for $p>2$, but they are still true for $p=2$ (and for all semi-stable representations, not just unipotent ones).
\end{proof}

Now let $(\rho, V, D)$ be a \emph{unipotent} semi-stable representation, and in the following we will use notations from the above lemma, i.e., $(\rho, V, D, T, \D, \M)$ and $(\rho', V', D', T', \D', \M')$.
\begin{prop}
$D'$ is also unipotent.
\end{prop}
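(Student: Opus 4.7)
The plan is to reduce the claim to Theorem \ref{etalequasi} by showing that the strongly divisible lattices $\M\subset\D$ and $\M'\subset\D'$ coincide as objects of $\Mod_{S}^{\varphi}$ (weight $r=1$), so that the $\varphi$-module unipotency criterion transfers automatically between $V$ and $V'$.

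First I would identify $\D = S\otimes_{W(k)} D$ with $\D' = S\otimes_{W(k)} D'$ via the canonical map on the common underlying $S$-module; under this identification the two Frobenii $\varphi_S \otimes \varphi_D$ and $\varphi_S \otimes \varphi_{D'}$ agree (since $\varphi_D = \varphi_{D'}$), and only the monodromy operators differ. The key observation is that, by the inductive definition of the filtration from Section 3.1, the constraint $N_\D(x)\in \Fil^0\D = \D$ is vacuous at level one, so
\[
\Fil^1 \D \;=\; \{\, x\in\D : f_\pi(x)\in \Fil^1 D_K\,\},
\]
and the same formula defines $\Fil^1\D'$ (since $D_K = D_K'$ carries the same filtration), whence $\Fil^1\D = \Fil^1\D'$. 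Combined with Lemma \ref{collect}, which gives $\M' = \M$ under the identification, this yields $\Fil^1\M = \M\cap\Fil^1\D = \M\cap\Fil^1\D' = \Fil^1\M'$, so $\M$ and $\M'$ are identical as objects of $\Mod_S^{\varphi}$.

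Lemma \ref{collect} also ensures that $V' = V_{\textnormal{st}}(D')$ is a semi-stable Galois representation with Hodge--Tate weights in $\{0,1\}$ and that $\M'\subset\D'$ is a (quasi-)strongly divisible lattice, so Theorem \ref{etalequasi} applies to both triples $(V,D,\M)$ and $(V',D',\M')$. That theorem gives: $V$ (resp.\ $V'$) is unipotent if and only if $\M$ (resp.\ $\M'$) is unipotent in $\Mod_S^{\varphi}$. Since $\M=\M'$ in this category, the hypothesis that $D$ is unipotent (i.e.\ that $V$ is unipotent) forces $V'$, and hence $D' = D_{\textnormal{st}}(V')$, to be unipotent.

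There is no real obstacle: the whole argument reduces to the observation that the weight-one filtration $\Fil^1\D$ does not involve $N_\D$, together with a formal application of Theorem \ref{etalequasi}. It is precisely this feature of the $r=1$ setting---where the $N$-structure on $\D$ only affects $\Fil^i$ for $i\geq 2$, which are invisible to the weight-one strongly divisible lattice condition---that makes the transfer from $\M$ to $\M'$ so clean.
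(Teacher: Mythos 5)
Your argument is correct and follows essentially the same route as the paper: both rely on Theorem \ref{etalequasi} in each direction, together with the observation that $\Fil^1\D = \Fil^1\D'$ (and the $\varphi$-structure agreeing) so that $\M$ and $\M'$ coincide as objects of $\Mod_S^{\varphi}$. Your extra remark that $\Fil^1$ does not see $N$ because the inductive definition is vacuous at the first step is exactly the reason behind the paper's one-line assertion that $\Fil^1\D' = \Fil^1\D$.
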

\begin{proof}
Since $D$ is unipotent, by Theorem \ref{etalequasi},  $\M$ is unipotent. Note that $\Fil^1\D' = \Fil^1 \D$, thus $\Fil^1 \M' =\Fil^1 \M$. Since the $\varphi$-structures on $\D'$ and $\D$ are the same,  $\M'$ is a quasi-strongly divisible lattice in $\D'$. And $\M'$ is unipotent because it has the same $\Fil^1$ and $\varphi$-structure as $\M$. Thus $D'$ is unipotent.
\end{proof}

\begin{prop} \label{p2crysff}
Let $p=2$, then $T_{\textnormal{st}}$ induces an anti-equivalence of categories between the category of unipotent strongly divisible lattices $\M$ of weight $1$ such that $N(\M) \subset u\M$ and the category of $G_K$-stable $\Zp$-lattices in unipotent crystalline representations of $G_K$ with Hodge-Tate weights in $\{0, 1\}$.
\end{prop}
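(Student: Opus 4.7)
The plan is to combine the Kisin-module machinery developed above with the strategy of Section 3.5 of \cite{Bre02}. Full faithfulness is immediate from Proposition \ref{Tstff}: the category in the statement is a full subcategory of the category of unipotent strongly divisible lattices, on which $T_{\mathrm{st}}$ is already fully faithful.

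For essential surjectivity, let $V$ be a unipotent crystalline representation with Hodge-Tate weights in $\{0,1\}$ and $T \subset V$ a $G_K$-stable $\Zp$-lattice. Proposition \ref{ThuaS} applied to $T|_{G_\infty}$ produces a unipotent Kisin module $\huaM$ with $T_\huaS(\huaM) \simeq T|_{G_\infty}$. Setting $\M := \M_\huaS(\huaM) = S \otimes_{\varphi, \huaS} \huaM$, Theorem \ref{mtoM} together with the construction in the proof of Proposition \ref{Tcris} give a unipotent quasi-strongly divisible lattice in the filtered $(\varphi, N)$-module $\D$ attached to $V$, with $T_{\mathrm{cris}}(\M) \simeq T|_{G_\infty}$. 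Once the inclusion $N_\D(\M) \subseteq u\M$ is established, Proposition 3.5.1 of \cite{Liu08}, which requires only $N$-stability and is independent of $p$, upgrades $\M$ to a strongly divisible lattice and identifies $T_{\mathrm{st}}(\M) \simeq T$ as $G_K$-representations, completing the proof.

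The inclusion $N_\D(\M) \subseteq u\M$ is where the crystalline hypothesis is essential. Since $N_D = 0$ on $D \simeq \D/\Fil^1 S_{K_0}\D$, the monodromy on $\D = S_{K_0}\otimes_{K_0} D$ reduces to $N_S \otimes \mathrm{id}$; combined with $N_S(S) \subseteq uS$ and the derivation property, expanding elements of $\M$ in the $S$-basis $(1 \otimes e_i)$ inherited from an $\huaS$-basis of $\huaM$ reduces the question to verifying $N_\D(\iota(\huaM)) \subseteq u\M$ for the canonical map $\iota : \huaM \hookrightarrow \M$, $m \mapsto 1 \otimes m$. This last statement is a structural feature of Kisin's equivalence in height one: $\iota(\huaM)$ is horizontal modulo $u$ for the connection on $\D$, reflecting the classical anti-equivalence between connected Barsotti-Tate groups over $\bigO_K$ and unipotent Kisin modules of height one employed in \cite{Bre02}.

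The main obstacle is precisely this horizontality-type verification. At $p > 2$ it is supplied by the $\tau$-expansion of \cite{Liu12}, which relies on a topological generator $\tau$ of $G_{p^\infty}$ with $-\log[\underline{\epsilon}(\tau)] = t$; no such $\tau$ is generally available at $p = 2$. Crystallinity circumvents the need for $\tau$ entirely: the identity $N_\D = N_S \otimes \mathrm{id}$ reduces the problem to an internal computation on the Kisin module $\huaM$, along the lines of \cite{Bre02}, rather than to an analysis of the $\tau$-action on $W(R) \otimes \M$.
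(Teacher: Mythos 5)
Your proposal follows essentially the same route as the paper: full faithfulness is inherited from Proposition \ref{Tstff}, essential surjectivity reduces via Proposition 3.5.1 of \cite{Liu08} to showing $N$-stability of the quasi-strongly divisible lattice, and in the crystalline weight-$1$ case the needed inclusion $N(\M) \subseteq u\M$ is an input from \cite{Bre02}. The one point to tighten is the last step: rather than asserting $N_\D(\iota(\huaM)) \subseteq u\M$ as a self-evident ``structural feature'' of the Kisin/Barsotti--Tate dictionary, you should pin it down to Lemma 3.5.6(1) of \cite{Bre02}, which is exactly the statement the paper invokes; the horizontality heuristic is a good guide but not a proof, and note also the slip that $\D/\Fil^1 S_{K_0}\D$ is $D_K$, not $D$ (one has $D \simeq \D/u\D$), though this does not affect the argument since all that is used is $N_D = 0$.
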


\begin{proof}
This is the generalization of Theorem 3.2.5 in \cite{Bre02}. We have already proved the full faithfulness in Proposition \ref{Tstff}. For the essential surjectivity, by the argument at the beginning of Subsection 4.2, we need to show that the quasi-strongly divisible lattice $\M$ we constructed there is $N$-stable, and this is part (1) of Lemma 3.5.6 of \cite{Bre02}. In fact, $\M$ is not only $N$-stable, we even have $N(\M) \subset u\M$.
\end{proof}

\begin{proof} [Proof of the Main Theorem \ref{mainthm} for $p=2$.]

Let $D_0 = N(D)$ be the image of the monodromy operator on $D$, by the proof of Lemma 3.5.3 in \cite{Bre02}, $D_0$ is weakly admissible and $\Fil^1 (D_0\otimes K) : = (D_{0}\otimes K) \cap \Fil^1 D_K =0$, so $D_0$ is multiplicative. Consider the natural embedding $D_0 \hookrightarrow D'$, it is an injective morphism in the category of filtered $\varphi$-modules, so it induces a surjection of representations $V' \twoheadrightarrow V_0$. Let $T_0$ be the image of $T'$ under the surjection, then $T_0$ is a $G_K$-stable $\Zp$-lattice in $V_0$. Since both $D'$ and $D_0$ are unipotent ($D_0$ is multiplicative, thus unipotent), by Proposition \ref{p2crysff}, the morphism $T' \to T_0$ induces a morphism $\alpha: \M_0 \to \M'$, where $\M_0 \subset \D_0$ is the strongly divisible lattice corresponding to $T_0$.

Let $D_0(1)$ be the filtered $\varphi$-module with $\Fil^i (D_0(1) \otimes K) := \Fil^{i-1}(D_0 \otimes K)$ and $\varphi_{D_0(1)}: = p\varphi_{D_0}$. Consider the map $f: D' \to D_0(1)$, where $x \mapsto N_D(x)$, then it is a morphism in the category of filtered $\varphi$-modules. It induces $V_0(1) \to V'$, and by Corollary 3.5.5 in \cite{Bre02}, $T_0(1)$ maps into $T'$ under the morphism. By Proposition \ref{ThuaS}, it induces a morphism between Kisin modules $\huaM' \to \huaM_0(1)$. Tensor this morphism with $S$, we get a morphism between Breuil modules $\beta: \M' \to \M_0(1)$. Note that if we take $T_{\textnormal{cris}}$ on this morphism, it is not neccesarily $T_0(1) \to T'$, this is because $\M_0(1)$ is \'{e}tale (thus not unipotent), so we do not know if $T_{\textnormal{cris}}(\M_0(1)) = T_{\huaS}(\huaM(1))$. However, we always have that $T_{\huaS}(\huaM(1)) \to T_{\textnormal{cris}}(\M_0(1))$ is injective by Lemma 5.3.1 of \cite{Liu07}. And if we tensor $\M' \to \M_0(1)$ with $\frac{1}{p}$, we will get back to $f\otimes Id_S: \D' \to \D_0(1)$. This is because the injective map in the lower left corner of the following commutative diagram will become bijective after tensoring with $\frac{1}{p}$:

$$
\xymatrix{
T_0(1) \ar[d]^{=} \ar[r] &T'\ar[d]^{=}\\
T_{\huaS}(\huaM_0(1)) \ar@{^{(}->}[d]  \ar[r] \ar[d] &T_{\huaS}(\huaM') \ar[d]^{=}\\
T_{\textnormal{cris}}(\M_0(1))  \ar[r] &T_{\textnormal{cris}}(\M')
}
$$

Similarly as in the proof of Corollary 3.5.7 of \cite{Bre02}, if we composite $\beta: \M' \to \M_0(1)$ with $\alpha(1): \M_0(1) \to \M'(1)$, we will get $\M' \to \M'(1)$. The morphism is in fact $1\otimes N_D$, which tells us that $\M$ is stable under $1\otimes N_D$. Since $\M$ is stable under $N_S\otimes 1$ by (3) of Lemma \ref{collect}, so $\M$ is stable under $N_{\D}= 1\otimes N_D + N_S\otimes 1$, and this implies the essential surjectivity of $T_{\textnormal{st}}$.

\end{proof}

\bibliographystyle{alpha}

\end{document}